\theoremstyle{plain}
\newtheorem*{mthmA}{Theorem A}
\newtheorem*{mthmB}{Theorem B}
\newtheorem*{mthmC}{Theorem C}
\newtheorem{lem}{Lemma}[section]
\newtheorem{cor}[lem]{Corollary}
\newtheorem{prop}[lem]{Proposition}
\theoremstyle{definition}
\newtheorem{ex}[lem]{Example}
\newtheorem{rem}[lem]{Remark}
\newtheorem{dfn}[lem]{Definition}
\newcommand{\CH}{\text{\upshape{CH}}}   
\newcommand{\codim}{\text{\upshape{codim }}}  
\newcommand{\tCH}{\text{\tiny\upshape{CH}}} 
\newcommand{\Rs}{\mathcal{R}}    
\newcommand{\cs}{\mathfrak{c}}   
\newcommand{\Sm}{\text{\bf{Sm}}}   
\newcommand{\SmProj}{\text{\bf{SmProj}}}   
\newcommand{\Ms}{\mathcal{M}}   
\newcommand{\Spec}{\text{Spec }}  
\newcommand{\frc}[2]{\raisebox{2pt}{$#1$}\big/\raisebox{-3pt}{$#2$}} 
\DeclareMathOperator{\im}{\mathrm{im}}    
\DeclareMathOperator{\hh}{\mathtt{h}}   
\title{Motives and Oriented cohomology of a linear algebraic group}
\author{Alexander Neshitov}
\address{Department of Mathematics and Statistics, University of Ottawa, Canada}
\email{anesh094@uottawa.ca}
\begin{document}

\maketitle

\begin{abstract}
For a cellular variety $X$ over a field $k$ of characteristic 0 and an algebraic oriented cohomology theory $\hh$ of Levine-Morel we construct a filtration on the cohomology ring $\hh(X)$ such that the associated graded ring is isomorphic to the Chow ring of $X$. Taking $X$ to be the variety of Borel subgroups of a split semisimple linear algebraic group $G$ over $k$ we apply this filtration to relate the oriented cohomology of $G$ to its Chow ring. As an immediate application we compute the algebraic cobordism ring of a group of type $G_2$ and of some other groups of small ranks, hence, extending several results by Yagita. 

Using this filtration we also establish the following comparison result between Chow motives and $\hh$-motives of generically cellular varieties: any irreducible Chow-motivic decomposition of a generically split variety $Y$ gives rise to a $\hh$-motivic decomposition of $Y$ with the same generating function. Moreover, under some conditions on the coefficient ring of $\hh$ the obtained $\hh$-motivic decomposition will be irreducible. We also prove that if Chow motives of two twisted forms of $Y$ coincide, then their $\hh$-motives coincide as well.
\end{abstract}

\section{Introduction}

We work over the base field $k$ of characteristic $0$. For an algebraic oriented cohomology theory $\hh$ of Levine-Morel \cite{LM} and a cellular variety $X$ of dimension $N$ we construct a filtration 
\[
\hh(X)=\hh^{(0)}(X)\supseteq\hh^{(1)}(X)\supseteq\ldots\supseteq \hh^{(N)}(X)\supseteq 0
\] 
on the cohomology ring such that the associated graded ring 
\[
Gr^*\hh(X)=\bigoplus_{i\ge 0} \hh^{(i)}(X)/\hh^{(i+1)}(X)
\] 
is isomorphic (as a graded ring) to the Chow ring $\CH^*(X,\Lambda)$ of algebraic cycles modulo rational equivalence relation with coefficients in a ring $\Lambda$. We exploit this filtration and isomorphism in two different contexts:

\medskip

First, we consider the (cellular) variety $X=G/B$ of Borel subgroups of a split semisimple linear algebraic group $G$ over $k$. By~\cite[Prop.~5.1]{GZ} the cohomology ring $\hh(G)$ can be identified with a quotient of $\hh(G/B)$, so there is an induced filtration on $\hh(G)$. One of our key results (Prop.~\ref{GroupComparison}) shows that $\CH^*(G,\Lambda)$ covers the associated graded ring $Gr^*\hh(G)$ and describes the kernel of this surjection. As an immediate application for $\hh=\Omega$ (the algebraic cobordism of Levine-Morel) we compute the cobordism ring for groups $G_2$, $SO_3$, $SO_4$, $Spin_n$ for $n=3,4,5,6$ and $PGL_n$ for $n\ge 2$, in terms of generators and relations, hence, extending several previously known results by Yagita \cite{Yagita}; as an application for $\hh=K_0$ (the Grothendieck $K_0$) we construct certain elements in the difference between topological and the Grothendieck $\gamma$-filtration on $K_0(X)$, hence, extending some of the results by Garibaldi-Zainoulline \cite{GarZ}.

\medskip

The second deals with the study of $\hh$-motives of generically cellular varieties. The latter is a natural generalization of the notion of the Chow motives to the case of an arbitrary algebraic oriented cohomology theory of Levine-Morel. It was introduced and studied by Nenashev-Zainoulline in~\cite{NZ} and Vishik-Yagita in~\cite{VY}. 

Let $\Lambda$ denote the coefficient ring of $\hh$ and let $\Lambda^i$ denote its $i$-th graded component.
We prove the following theorem which relates $\hh$-motives of generically cellular varieties to its Chow motives:

\begin{mthmA}
Let $X$ be a generically cellular variety over $k$, i.e. cellular over the function field $k(X)$.
Assume that the Chow motive of $X$ with coefficients in $\Lambda^0$ splits as
\[
M^{\tCH}(X,\Lambda^0)=\bigoplus_{i\ge 0} \Rs(i)^{\oplus c_i},\; c_i \ge 0,
\] 
for some motive $\Rs$ which splits as a direct sum of twisted Tate motives $\overline{\Rs}=\bigoplus_{j\ge 0}\Lambda^0(j)^{\oplus d_j}$ over its splitting field. 

Then the $\hh$-motive of $X$ (with coefficients in $\Lambda$) splits as 
\[
M^{\hh}(X)=\bigoplus_{i\ge 0} \Rs_{\hh}(i)^{\oplus c_i}
\]
for some motive $\Rs_{\hh}$ and over the same splitting field $\Rs_{\hh}$ splits as a direct sum of twisted $\hh$-Tate motives 
$\overline{\Rs_{\hh}}=\bigoplus_{j\ge 0}\Lambda(j)^{\oplus d_j}$.
\end{mthmA}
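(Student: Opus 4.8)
The plan is to exploit the filtration constructed earlier together with the fact that $\hh$-motives of generically cellular varieties are controlled by their realization over the splitting field. The key principle (already implicit in the setup, via the Gillet–Zainoulline type identification and the graded comparison $Gr^*\hh(X)\cong\CH^*(X,\Lambda)$) is that for a generically cellular $X$ the endomorphism ring $\mathrm{End}_{\hh}(M^\hh(X))$ admits a filtration whose associated graded is $\mathrm{End}_{\CH}(M^{\tCH}(X,\Lambda^0))$ (after base change to $\Lambda^0$), so idempotents can be lifted. I would first set up this comparison precisely: realize both motivic endomorphism rings as subrings of the ``split'' correspondence rings (matrices over $\Lambda$, resp. $\Lambda^0$) cut out by the condition of being defined over $k$, and observe that the filtration on $\hh(X\times X)$ restricts to a multiplicative, complete, separated filtration on $\mathrm{End}_\hh(M^\hh(X))$ with $Gr^\bullet\cong\mathrm{End}_{\CH}(M^{\tCH}(X,\Lambda))$, and that the latter surjects onto $\mathrm{End}_{\CH}(M^{\tCH}(X,\Lambda^0))$.

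**Next I would lift the idempotents.** Given the Chow decomposition $M^{\tCH}(X,\Lambda^0)=\bigoplus \Rs(i)^{\oplus c_i}$, choose orthogonal idempotents $\{\pi_{i,\ell}\}$ in $\mathrm{End}_{\CH}(M^{\tCH}(X,\Lambda^0))$ realizing it. Using the surjection and the (multiplicatively) filtered structure, lift these to a complete orthogonal system of idempotents $\{\tilde\pi_{i,\ell}\}$ in $\mathrm{End}_\hh(M^\hh(X))$: first lift a single idempotent along the surjection to $\mathrm{End}_{\CH}(M^{\tCH}(X,\Lambda))$, then lift along the filtration using completeness (standard successive-approximation idempotent lifting — the obstruction at each stage lives in the next graded piece and can be killed because we are lifting an idempotent modulo a nilpotent-in-the-limit ideal), and finally orthogonalize. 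This produces summands $\Rs_\hh(i)$ of $M^\hh(X)$ with the right multiplicities $c_i$, where $\Rs_\hh$ is defined by the lifted idempotent corresponding to $\Rs$.

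**Then I would compute the realization** $\overline{\Rs_\hh}$ over the splitting field $E$ of $\Rs$. Over $E$ the variety becomes cellular, so $M^\hh(X_E)$ splits as a sum of Tate motives $\Lambda(j)$ with multiplicities equal to the Betti-type numbers of $X$, and these same numbers govern $M^{\tCH}(X_E,\Lambda^0)$. The idempotent defining $\overline{\Rs_\hh}$ is the base change of $\tilde\pi$ (the one cutting out $\Rs_\hh$); because the filtration on $\hh(X_E\times X_E)$ has associated graded $\CH(X_E\times X_E,\Lambda)$ and the whole situation is now split, the realized idempotent and the one defining $\overline{\Rs}=\bigoplus \Lambda^0(j)^{\oplus d_j}$ have the same ``leading term,'' and since over $E$ there is no twisting the rank in each Tate degree $j$ is forced to be exactly $d_j$; hence $\overline{\Rs_\hh}=\bigoplus_j\Lambda(j)^{\oplus d_j}$.

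**The main obstacle** I expect is bookkeeping the two-step lift correctly: the filtration on $\hh$ only gives, on the associated graded, the Chow ring with the \emph{full} coefficient ring $\Lambda$, whereas the hypothesis is phrased over $\Lambda^0$, so one must argue that the Chow decomposition over $\Lambda^0$ persists over $\Lambda$ (or at least that the relevant idempotents lift through $\Lambda\twoheadrightarrow\Lambda^0$) — this is where one needs either that $\Lambda^0\to\Lambda$ splits off or a further idempotent-lifting argument along the (non-nilpotent, but still manageable) kernel, and getting the completeness/separatedness hypotheses exactly right for this second lift is the delicate point. The orthogonalization and the final rank count over $E$ are then routine.
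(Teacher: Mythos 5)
Your overall architecture (lift the Chow idempotents and isomorphisms to $\hh$-correspondences using the filtration, then identify the split realization) is the same as the paper's, but two of the steps you treat as "implicit in the setup" are precisely the technical heart of the argument, and as stated they are either unproved or false. First, you assert that the filtration on $End_{\hh}(M^{\hh}(X))$ over the base field is complete and separated with associated graded $End_{\tCH}(M^{\tCH}(X,\Lambda))$. Separatedness fails for the induced filtration $\hh^{(l)}(X\times X)=(p^{\hh})^{-1}(\hh^{(l)}(\overline{X}\times\overline{X}))$: the entire kernel of the restriction $p^{\hh}\colon End(M^{\hh}(X))\to End(M^{\hh}(\overline X))$ sits in every filtration step, so successive-approximation idempotent lifting does not get off the ground unless one first knows this kernel consists of nilpotents. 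That is a Rost nilpotence statement for $\hh$-motives, which the paper proves separately (via the surjection $\Omega\to\CH$ and $\Omega\otimes\Lambda\to\hh$, using Vishik--Yagita and Vishik--Zainoulline), and which your proposal never mentions. Second, the identification of the associated graded of the \emph{rational} (defined over $k$) part with the rational Chow endomorphisms is not automatic and is in general only an inclusion: the paper proves $\im(\Phi^m\circ p^{\tCH})\subseteq\im(pr_m\circ p^{\hh})$ by taking a resolution $\widetilde Z\to Z\subset X\times Y$ and using that the expansion of $[\widetilde Z_L\to\overline X\times\overline X]_{\Omega}$ has Lazard-ring coefficients of nonpositive degree; the reverse inclusion can fail and is exactly what requires the extra hypothesis $\Lambda^1=\ldots=\Lambda^N=0$ used only for Theorem B. Your lifting argument only needs the proved direction, but you give no argument for it, and your stated "associated graded" claim is the stronger, generally false version.

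Two smaller gaps: the $\Lambda$ versus $\Lambda^0$ issue you flag as the main obstacle is resolved in the paper not by a second lift along $\Lambda\twoheadrightarrow\Lambda^0$ but by working with the bigraded pieces $\hh^{(i)}_{2N-i}$, whose graded quotient is canonically $\CH^i(\overline X\times\overline X,\Lambda^0)$, so no such lift is needed; and your final "rank count" over the splitting field is insufficient as stated. Knowing that the realized idempotent $p^{\hh}(r_0)$ has the right leading term does not by itself show that $(\overline X,p^{\hh}(r_0))$ is a sum of Tate twists with multiplicities $d_j$: one must construct an explicit split idempotent $\sum_j\varphi_j\otimes\gamma_j$ lifting the K\"unneth components of $p^{\tCH}(p_0)$, observe the difference lies in $\hh^{(N+1)}(\overline X\times\overline X)$ and hence is nilpotent for the composition product, and then invoke a lemma that two (non-commuting) idempotents differing by a nilpotent define isomorphic motives. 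These ingredients are all supplied in the paper and are missing from, or only gestured at in, your outline.
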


This result can also be derived from the arguments of~\cite{VY} where it is proved that sets of isomorphism classes of objects of category of Chow motives and $\Omega$-motives coincide. However, our approach gives more explicit correspondence between idempotents defining  the (Chow) motive $\Rs$ and the $\hh$-motive $\Rs_{\hh}$. The latter allows us to prove the following result concerning the indecomposability of the $\hh$-motive $\Rs_{\hh}$:

\begin{mthmB} Assume that $\Lambda^1=\ldots\Lambda^N=0$, where $N=\dim X$.

If the Chow motive $\Rs$ is indecomposable (over $\Lambda^0$), then the $\hh$-motive $\Rs_{\hh}$ is indecomposable  (over $\Lambda$).
\end{mthmB}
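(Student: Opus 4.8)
The plan is to descend to endomorphism rings and reflect indecomposability along a ring surjection with nilpotent kernel. A motive, Chow or $\hh$, is indecomposable exactly when its endomorphism ring is connected, i.e. has no idempotents besides $0$ and $1$; so it suffices to construct a surjective ring homomorphism
\[
\pi\colon \operatorname{End}^{\hh}(\Rs_{\hh})\longrightarrow \operatorname{End}^{\CH}(\Rs,\Lambda^0)
\]
with nilpotent kernel. Granting this, if $e\in\operatorname{End}^{\hh}(\Rs_{\hh})$ is idempotent then $\pi(e)$ is an idempotent of $\operatorname{End}^{\CH}(\Rs,\Lambda^0)$, hence $0$ or $1$ since $\Rs$ is indecomposable; and because a nil ideal contains no nonzero idempotent, $\pi(e)=0$ forces $e=0$ and $\pi(e)=1$ forces $1-e=0$. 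Thus $\operatorname{End}^{\hh}(\Rs_{\hh})$ is connected and $\Rs_{\hh}$ is indecomposable.

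To produce $\pi$ I would use the explicit matching of idempotents furnished by the proof of Theorem~A. Write $\Rs=(X,\rho)$ and $\Rs_{\hh}=(X,\rho_{\hh})$, with $\rho\in\CH^{N}(X\times X,\Lambda^0)$ the defining idempotent and $\rho_{\hh}\in\hh^{N}(X\times X)$ a lift of $\rho$ sitting in the $N$-th step of the filtration (the filtration of the Introduction applies to $X\times X$ and $X\times X\times X$, which are again generically cellular). Under composition of correspondences, $\operatorname{End}^{\hh}(\Rs_{\hh})=\rho_{\hh}\circ\hh^{N}(X\times X)\circ\rho_{\hh}$ and $\operatorname{End}^{\CH}(\Rs,\Lambda^0)=\rho\circ\CH^{N}(X\times X,\Lambda^0)\circ\rho$. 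This is where the hypothesis $\Lambda^{1}=\dots=\Lambda^{N}=0$, with $N=\dim X$, is used essentially: on the $2N$-dimensional variety $X\times X$ the only degrees $b$ with $\CH^{N-b}(X\times X)\otimes\Lambda^{b}\ne 0$ satisfy $b\le 0$, so any codimension-$N$ class on $X\times X$ already lies in $\hh^{(N)}(X\times X)$, while its part of negative $\Lambda$-degree lies in $\hh^{(N+1)}(X\times X)$. Hence $\operatorname{End}^{\hh}(\Rs_{\hh})\subseteq\hh^{(N)}(X\times X)$, the filtration restricts to it, and reduction modulo $\hh^{(N+1)}(X\times X)$ gives a surjection of $\operatorname{End}^{\hh}(\Rs_{\hh})$ onto a ring which the hypothesis identifies with $\rho\circ\CH^{N}(X\times X,\Lambda^0)\circ\rho=\operatorname{End}^{\CH}(\Rs,\Lambda^0)$; this is the map $\pi$.

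It remains to check three things. First, that $\pi$ is a ring homomorphism: this requires the filtration on $X\times X\times X$ and $X\times X$ to be preserved by the pull-backs $p_{12}^{*},p_{23}^{*}$ and shifted by the relative dimension $N$ under the proper push-forward $p_{13\,*}$, so that the convolution product carries $\hh^{(i)}\times\hh^{(j)}$ into $\hh^{(i+j-N)}$ and induces composition of Chow correspondences on the associated graded. Second, that $\pi$ is surjective: any $\rho\circ\gamma\circ\rho$, $\gamma\in\CH^{N}(X\times X,\Lambda^0)$, equals $\pi(\rho_{\hh}\circ\widetilde\gamma\circ\rho_{\hh})$ for any lift $\widetilde\gamma\in\hh^{N}(X\times X)$ of $\gamma$, since $\pi(\rho_{\hh})=\rho$ and $\hh^{N}(X\times X)\twoheadrightarrow\CH^{N}(X\times X,\Lambda^0)$. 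Third, that $\ker\pi$ is nilpotent: it equals $\operatorname{End}^{\hh}(\Rs_{\hh})\cap\hh^{(N+1)}(X\times X)$, a two-sided ideal by the compatibility of the filtration with composition, and it is nilpotent because the filtration on the $2N$-dimensional variety $X\times X$ is finite.

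The main obstacle is the first verification: showing that the filtration $\hh^{(\bullet)}$ is functorial enough --- preserved by the relevant flat pull-backs and correctly shifted by the relevant proper push-forward --- to descend to a finite, multiplicative filtration on the correspondence ring and to make $\pi$ compatible with composition. Everything else is the classical lifting-of-idempotents argument together with a bookkeeping on Chow codimension and $\Lambda$-degree that is made possible precisely by the vanishing of $\Lambda^{1},\dots,\Lambda^{N}$.
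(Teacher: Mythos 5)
Your overall strategy --- reflect idempotents along a graded-reduction map with nilpotent kernel --- is the same one the paper uses, but two steps of your plan are wrong as stated, and both conceal the actual inputs of the proof. First, the identification of the target ring. The filtration on $\hh(X\times X)$ is not intrinsic to $X\times X$ (which is not cellular over $k$): it is the preimage under $p^{\hh}$ of the filtration on the split form $\overline{X}\times\overline{X}$, and the degree-$N$ graded piece is identified by $\Phi^N$ with $\CH^N(\overline{X}\times\overline{X},\Lambda^0)$, not with $\CH^N(X\times X,\Lambda^0)$. What the hypothesis $\Lambda^1=\dots=\Lambda^N=0$ actually buys (Lemma~\ref{indecomp_lem}, via the generalized degree formula) is that the image of $pr_N\circ p^{\hh}$ on $k$-defined dimension-$N$ classes is exactly the subgroup of \emph{rational} cycles, i.e.\ the image of $p^{\tCH}$. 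So your $\pi$ lands in $\overline{\rho}\circ\bigl(\im p^{\tCH}\bigr)\circ\overline{\rho}$, a quotient of $\operatorname{End}^{\tCH}(\Rs)$ by the kernel of restriction, not in $\operatorname{End}^{\tCH}(\Rs)$ itself; to turn a nontrivial idempotent there into a decomposition of $\Rs$ over $k$ you still need Rost nilpotence for Chow motives of twisted forms (\cite{VZ}), which you never invoke.

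Second, and more seriously, your justification of the nilpotence of $\ker\pi$ (``the filtration on the $2N$-dimensional variety $X\times X$ is finite'') does not work: since the filtration on $X\times X$ is pulled back along $p^{\hh}$, the kernel of $pr_N\circ p^{\hh}$ contains $\ker p^{\hh}$, the correspondences that vanish over the splitting field, and finiteness of the filtration says nothing about these. Their nilpotence is precisely the $\hh$-version of the Rost Nilpotence theorem (Lemma~\ref{Rost}, resting on \cite{VY} and \cite{VZ}), which together with the nilpotence of $\hh^{(N+1)}(\overline{X}\times\overline{X})$ under composition (Lemma~\ref{filtr_comp}) gives Lemma~\ref{nilpotent_kernel}; this is the substantive external input of the whole argument and cannot be replaced by the finiteness remark. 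By contrast, the issue you single out as the main obstacle --- compatibility of the filtration with the correspondence product --- is the easy part: over the split form it follows from the explicit K\"unneth basis computation of Lemma~\ref{filtr_comp}, with no appeal to functoriality of a filtration on $X\times X\times X$ over the base field.
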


and also the following comparison property:

\begin{mthmC}
Suppose that $X,Y$ are generically cellular and $Y$ is a twisted form of $X$, i.e. $Y$ becomes isomorphic to $X$ over some splitting field.

If $M^{\tCH}(X,\Lambda^0)\cong M^{\tCH}(Y,\Lambda^0)$, then
$M^{\hh}(X)\cong M^{\hh}(Y)$.
\end{mthmC}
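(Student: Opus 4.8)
The plan is to leverage Theorem A to transfer the hypothesis $M^{\tCH}(X,\Lambda^0)\cong M^{\tCH}(Y,\Lambda^0)$ to the $\hh$-level, using the explicit correspondence between Chow idempotents and $\hh$-idempotents that the paper advertises as the key improvement over \cite{VY}. First I would observe that since $Y$ is a twisted form of $X$, both $X$ and $Y$ become cellular over a common splitting field $E$, and over $E$ the Chow motives $M^{\tCH}(\bar X,\Lambda^0)$ and $M^{\tCH}(\bar Y,\Lambda^0)$ are isomorphic sums of Tate motives; the same holds for the $\hh$-motives over $E$ by the filtration/graded-ring isomorphism $Gr^*\hh(-)\cong\CH^*(-,\Lambda)$ applied over $E$. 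The real content is descent from $E$ back to $k$.

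Next I would make the hypothesis effective: an isomorphism $M^{\tCH}(X,\Lambda^0)\cong M^{\tCH}(Y,\Lambda^0)$ is given by mutually inverse correspondences $f\in\CH^{\dim X}(X\times Y,\Lambda^0)$ and $g\in\CH^{\dim Y}(Y\times X,\Lambda^0)$ with $g\circ f=\Delta_X$ and $f\circ g=\Delta_Y$. I want to lift $f$ and $g$ to $\hh$-correspondences. Here I would invoke exactly the machinery behind Theorem A: the variety $X\times Y$ (and $Y\times X$) is generically cellular — indeed it is cellular over $E$ — so the filtration on $\hh(X\times Y)$ has associated graded $\CH^*(X\times Y,\Lambda)$, and the degree-zero part recovers $\CH^*(X\times Y,\Lambda^0)$. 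Using the same lifting argument that produces the idempotents $\Rs_{\hh}$ in Theorem A, I would lift $f$ to $\tilde f\in\hh(X\times Y)$ and $g$ to $\tilde g\in\hh(Y\times X)$ so that $\tilde g\circ\tilde f$ and $\tilde f\circ\tilde g$ reduce to $\Delta_X$, $\Delta_Y$ in the associated graded. Then I would run a successive-approximation / nilpotence argument along the filtration: the compositions $\tilde g\circ\tilde f-\Delta_X$ and $\tilde f\circ\tilde g-\Delta_Y$ lie in the first step of the filtration, which is a nilpotent ideal (the filtration has length $N=\dim$), so a standard geometric-series correction replaces $\tilde g$ by $\tilde g'=\tilde g\cdot(\tilde g\circ\tilde f)^{-1}$-type adjustment to make the pair into genuine mutually inverse isomorphisms. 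This yields $M^{\hh}(X)\cong M^{\hh}(Y)$.

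Alternatively, and perhaps more cleanly, I would phrase the whole thing through Theorem A itself applied to the variety $Z=X\sqcup Y$ (or to $X\times Y$): the Chow motive of $Z$ over $\Lambda^0$ decomposes, and the isomorphism of Chow motives of $X$ and $Y$ means that in that decomposition the multiplicities $c_i$ are "shared" — formally, $M^{\tCH}(X,\Lambda^0)$ and $M^{\tCH}(Y,\Lambda^0)$ are built from the \emph{same} list of indecomposable summands $\Rs$ with the \emph{same} multiplicities. Applying Theorem A to each summand produces $\hh$-motives $\Rs_{\hh}$ that depend only on the Chow summand $\Rs$ and its generating function (not on whether it sits inside $X$ or inside $Y$), again because the construction of $\Rs_{\hh}$ in Theorem A is canonical given the Chow idempotent. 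Summing over the common list of summands gives $M^{\hh}(X)\cong\bigoplus_i\Rs_{\hh,i}\cong M^{\hh}(Y)$.

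The main obstacle I anticipate is the descent/canonicity point: I need the $\hh$-motive $\Rs_{\hh}$ produced by Theorem A to be \emph{functorial enough} in the Chow idempotent that an isomorphism $M^{\tCH}(X,\Lambda^0)\xrightarrow{\sim} M^{\tCH}(Y,\Lambda^0)$ — not merely an abstract isomorphism of summands but a specific correspondence — lifts to an isomorphism $M^{\hh}(X)\xrightarrow{\sim}M^{\hh}(Y)$ over $k$. This requires that the lifting procedure in the proof of Theorem A be applied not to an idempotent $p$ on a single variety but simultaneously to $p$, $q$ and an intertwiner $\phi$ with $\phi p=q\phi$ and $\psi q=p\psi$, $\psi\phi=p$, $\phi\psi=q$; the filtration argument should handle this system just as well, since all the error terms live in the nilpotent ideal $\hh^{(1)}$ of the appropriate correspondence ring, but one must check that the ring $\hh(X\times Y)$ (with its composition product) genuinely carries a filtration with nilpotent first step whose graded pieces are $\CH^*(X\times Y,\Lambda)$ — i.e. that the filtration of the introduction is multiplicative for the \emph{composition} of correspondences, not just the cup product. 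Verifying this compatibility of the filtration with correspondence composition is the technical heart; once it is in place, the rest is the routine nilpotence/lifting formalism.
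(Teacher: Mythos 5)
Your overall route is the paper's route: make the hypothesis effective via mutually inverse correspondences $f\in\CH^N(X\times Y,\Lambda^0)$, $g\in\CH^N(Y\times X,\Lambda^0)$, lift them to $\hh$-correspondences $f_1,g_1$, observe that $g_1\circ f_1-\Delta_X$ and $f_1\circ g_1-\Delta_Y$ are nilpotent, and conclude by the elementary nilpotence lemma (Lemma~\ref{isomorphic_objects}); the compatibility of the filtration with the composition product that you correctly single out is Lemma~\ref{filtr_comp}. However, two points in your justification are genuine gaps. First, you assert that the filtration on $\hh(X\times Y)$ over $k$ has associated graded $\CH^*(X\times Y,\Lambda)$ because $X\times Y$ is generically cellular. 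That is false over the base field: the graded-ring isomorphism $\Psi$ of Proposition~\ref{Psi_existence} is only available for cellular varieties, and over $k$ the filtration on $\hh(X\times Y)$ is merely the preimage of the filtration over the splitting field; identifying its graded with $\CH^*(X\times Y,\Lambda)$ would amount to saying all cycles over $\overline{X}\times\overline{X}$ are rational. The lifting of $f$ and $g$ must instead go through rational cycles: one shows that $\Phi^N\circ p^{\tCH}\bigl(\CH^N(X\times Y,\Lambda^0)\bigr)$ is contained in the image of $pr_N\circ p^{\hh}$ on $\hh^{(N)}_N(X\times Y)$, which is exactly Lemmas~\ref{rat} and~\ref{subset} and is where the generalized degree formula and the explicit $\Omega$-basis enter.

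Second, and more seriously, the nilpotence of the error terms over $k$ does not follow from the finiteness of the filtration alone, as you claim. What the construction gives is that $g_1\circ f_1-\Delta_X$ maps, after pullback to the splitting field, into $\hh^{(N+1)}(\overline{X}\times\overline{X})$, which is nilpotent for the composition product by Lemma~\ref{filtr_comp}; but the element itself lives in $End(M^{\hh}(X))$ over $k$, and some power of it only lands in the kernel of $p^{\hh}\colon End(M^{\hh}(X))\to End(M^{\hh}(\overline{X}))$. To conclude nilpotence you need that this kernel consists of nilpotents, i.e. an $\hh$-version of the Rost nilpotence theorem (Lemma~\ref{Rost}), which is a nontrivial input resting on results of Vishik--Yagita and Vishik--Zainoulline and is nowhere invoked in your proposal. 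With Lemma~\ref{rat} replacing your graded-ring claim and Rost nilpotence supplying the missing half of Lemma~\ref{nilpotent_kernel}, your argument becomes the paper's proof; your alternative sketch via Theorem~A applied to $X\sqcup Y$ is not needed and, as stated, does not address descent of a specific isomorphism either.
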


The paper is organized as follows: In section~\ref{Preliminaries} we recall concepts of an algebraic oriented cohomology theory $\hh$ of Levine-Morel and the corresponding category of $\hh$-motives.  In section~\ref{sec:filtration} we introduce the filtration on the cohomology ring $\hh(X)$ of a cellular variety $X$ which plays a central role in the paper.  In section~\ref{Group cohomology} we apply the filtration to obtain several comparison results between $\CH(G)$ and $\hh(G)$. In particular, in section~\ref{sec:comput} we compute the algebraic cobordism $\Omega$ for some groups of small ranks and construct explicit elements in the difference of between the topological and the $\gamma$-filtration on $K_0(G/B)$. In section~\ref{Motives} we apply the filtration to obtain comparison results between $\hh$-motives and Chow-motives of generically split varieties.

\medskip

\paragraph{\bf Acknowledgments}
I am grateful to my PhD supervisor Kirill Zainoulline for useful discussions concerning the subject of this paper. The work has been supported by the Ontario Trillium Graduate Scholarship, RFBR grant 12-01-33057 and by NSERC~396100-2010 and ERA grants of K.~Zainoulline.

\section{Preliminaries}\label{Preliminaries}

In the present section we recall notions of an algebraic oriented cohomology theory, a formal group law and of a cellular variety. We recall the definition of the category of $\hh$-motives with the inverted Tate object.

\subsubsection*{Oriented cohomology theories}\label{Och} 

The notion of an algebraic oriented cohomology theory was introduced by Levine-Morel~\cite{LM}  and Panin-Smirnov~\cite{PaninPush2}. Let $\Sm_k$ denote the category of smooth varieties over $\Spec k=pt$. An algebraic oriented cohomology theory $\hh^*$ is a functor from $\Sm_k^{op}$ to the category of graded rings. We will denote by $f^{\hh}\colon\hh^*(Y)\to\hh^*(X)$ the induced morphism $f\colon X\to Y$  and call it the pullback of $f$. By definition, the functor $\hh^*$ is equipped with the pushforward map $f_{\hh}\colon\hh^*(X)\to\hh^{\dim Y-\dim X+*}(Y)$ for any projective morphism $f\colon X\to Y$. These two structures satisfy the axioms of~\cite[Def.~1.1.2]{LM}. We denote its coefficient ring $\hh^*(pt)$ by $\Lambda^*$. As for the Chow groups, we will also use the lower grading for $\hh$, i.e. $\hh_i(X)=\hh^{\dim X-i}(X)$ for an irreducible variety $X$.

\subsubsection*{Formal group law}

For an oriented cohomology theory $\hh^*$ there is a notion of the first Chern class of a line bundle. For $X\in\Sm_k$ and a line bundle $L$ over $X$ it is defined as $c_1^{\hh}(L)=z^{\hh}z_{\hh}(1)\in\hh^1(X)$ where $z\colon X\to L$ is a zero section. There is a commutative associative 1-dimensional formal group law $F$ over $\Lambda^*$ such that for any two line bundles $L_1$ , $L_2$ over $X$ we have $c_1^{\hh}(L_1\otimes L_2)=F(c_1^{\hh}(L_1),c_1^{\hh}(L_2))$~\cite[Lem.~1.1.3]{LM}. We will use the notation $x+_Fy$ for $F(x,y)$. For any $x$ we will denote by $-_Fx$ the unique element such that $x+_F(-_Fx)=0$. For any $n\in\mathbb{Z}$ we will denote by $n\cdot_Fx$ the expression $x+_F\ldots +_Fx$ ($n$ times) if $n$ is positive, and $(-_Fx)+_F\ldots+_F(-_Fx)$ ($-n$ times) if $n$ is negative. By~\cite{LM} there is a natural transformation that commutes with pushforwards:
\[
\nu_X\colon\Omega^*(X)\otimes_{\mathbb{L}^*}\Lambda^*\to\hh^*(X),
\]
where $\mathbb{L}=\Omega(pt)$ is the Lazard ring and the map $\mathbb{L}^* \to \Lambda^*$ is obtained by specializing the coefficients of the universal formal group law to the coefficients of the corresponding $F$.

\subsubsection*{Cellular and generically cellular varieties}

A variety $Y\in \Sm_k$ is called cellular if there is a filtration of $Y=Y_0\supseteq Y_1\supseteq\ldots\supseteq Y_m\supseteq\emptyset$ such that each $Y_i\setminus Y_{i+1}$ is a disjoint union of affine spaces of the same rank $c_i$: $Y_i\setminus Y_{i+1}\cong\mathbb{A}^{c_i}_{k}\coprod\ldots\coprod\mathbb{A}_k^{c_i}$.

We call a variety $X$ generically cellular if $X_{k(X)}$ is a cellular variety over the function field $k(X)$.

\begin{ex}\label{flag}
Let $G$ be a split semisimple algebraic group, $B$ its Borel subgroup containing a fixed maximal split torus $T$ and  $W$ the corresponding Weyl group. For any $w\in W$ let $l(w)$ denote its length. Let $w_0\in W$ denote the longest element of $W$ and $N=l(w)$. It is well known that the flag variety $X=G/B$ has the cellular structure given by the Schubert cells $X_w$:
\[
X=X_{w_0}\supseteq \!\!\!\!\bigcup_{l(w)=N-1}\!\!\!\!X_w\supseteq \!\!\!\!\bigcup_{l(w)=N-2}\!\!\!\! X_w\supseteq\ldots\supseteq X_e=pt,
\]
where $X_w$ is the closure of $BwB/B$ in $X$. 
\end{ex}

\begin{ex}
Let $\zeta \in Z^1(k,G)$ be a 1-cocycle with values in $G$. Then the twisted form $_{\zeta}(G/B)$ of $X=G/B$ provides an example of a generically split variety.
\end{ex}

\subsubsection*{$\hh$-motives}

The notion of $\hh$-motives for the algebraic oriented cohomology theory $\hh$ was studied by Nenashev-Zainoulline in~\cite{NZ}, and Vishik-Yagita in~\cite{VY}. We refer to \cite[\S2]{VY} for definition of the category of effective $\hh$-motives. In the present paper we will deal with the category of $\hh$-motives $\Ms_{\hh}$ with the inverted Tate object. It is constructed as follows:

\medskip

Let $\SmProj_k$ denote the category of smooth projective varieties over $k$. Following~\cite{EKM} we consider the category $Corr_{\hh}$ defined as follows: For $X,Y\in\SmProj_k$ with irreducible $X$ and $m\in\mathbb{Z}$ we set
\[
Corr_m(X,Y)=\hh_{\dim X+m}(X\times Y).
\]
Objects of $Corr_{\hh}$ are pairs $(X,i)$ with $X\in \SmProj_k$ and $i\in\mathbb{Z}$. For $X\in \SmProj_k$ with irreducible components $X_l$ define the morphisms
\[
Hom_{Corr}\big((X,i),(Y,j)\big)=\coprod_{l}Corr_{i-j}(X_l,Y).
\]
For $\alpha\in Hom\big((X,i),(Y,j)\big)$ and $\beta\in Hom\big((Y,j),(Z,k)\big)$ the composition is given by the usual correspondence product: $\alpha\circ\beta=(p_{XZ})_{\hh}\big((p_{YZ})^{\hh}(\beta)\cdot (p_{XY})^{\hh}(\alpha)\big)$, where $p_{XY},p_{YZ},p_{XZ}$ denote the projections from $X\times Y\times Z$ onto the corresponding summands.

\medskip

Taking consecutive additive and idempotent completion of $Corr_{\hh}$ we obtain the category $\Ms_{\hh}$ of $\hh$-motives with inverted Tate object. Objects of this category are $(\coprod_i (X_i,n_i),p)$ where $p$ is a matrix with entries $p_{i,j}\in Corr_{n_i-n_j}(X_i,X_j)$ such that $p\circ p=p$. Morphisms between objects are given by the set
\[
Hom\big((\coprod (X_i,n_i),p),(\coprod (Y_j,m_j),q)\big)=q\circ \bigoplus_{i,j}Corr_{n_i-m_j}(X_i,Y_j)\circ p
\] 
considered as a subset of $\bigoplus_{i,j}Corr_{n_i-m_j}(X_i,Y_j)$. This is an additive category where each idempotent splits. There is a natural tensor structure inherited from the category $Corr_{\hh}$:
\[
(\coprod (X_i,n_i),p)\otimes (\coprod (Y_j,m_j),q)=(\coprod_{i,j}(X_i\times Y_j,n_i+n_j),p\times q)
\]
where $p\times q$ denotes the projector $p_{(i_1,j_1)(i_2,j_2)}=p_{i_1,i_2}\times q_{j_1,j_2}\colon X_{i_1}\times Y_{j_1}\to X_{i_2}\times Y_{j_2}$.

\medskip

There is a functor $M^{\hh}\colon \SmProj_k\to\Ms_{\hh}$ that maps a variety $X$ to the motive $M^{\hh}(X)=((X,0),id_X)$ and any morphism $f\colon X\to Y$ to the correspondence $(\Gamma_f)_{\hh}(1)\in\hh_{\dim X}(X\times Y)=Corr_0(X,Y)$, where $\Gamma_f\colon X\to X\times Y$ is the graph inclusion. We will denote by $\Delta\colon X\to X\times X$ the diagonal embedding. Then $\Delta_{\hh}(1)$ is the identity in $Corr_0(X,X)$.

Denote $M^{\hh}(pt)$  by $\Lambda$ and $((pt,1),id_{pt})$ by $\Lambda(1)$. We call $\Lambda(1)$ the $\hh$-Tate motive. We write $\Lambda(n)$ for $\Lambda(1)^{\otimes n}$ and $M^{\hh}(X)(n)$ for $M^{\hh}(X)\otimes \Lambda(n)$. The  motive $M^{\hh}(X)(n)$ is called the $n$-th twist of the motive $M^{\hh}(X)$.

By definition we have
\[
\hh^{i}(X)=Hom_{\Ms_{\hh}}(M^{\hh}(X),\Lambda(i))\text{ and }\hh_i(X)=Hom_{\Ms_{\hh}}(\Lambda(i),M^{\hh}(X)).
\]

\begin{lem}\label{Tate_basis} 
For $X\in\SmProj_k$ with the structure morphism $\pi\colon X\to pt$ any isomorphism $M^{\hh}(X)\cong\bigoplus_i\Lambda(\alpha_i)$ corresponds to a choice of two $\Lambda$-basis sets
\[
\{\tau_i\in\hh^{\alpha_i}(X)\}_i\text{ and } \{\zeta_i\in\hh_{\alpha_i}(X)\}_i
\] 
such that 
$\pi_{\hh}(\tau_i\zeta_j)=\delta_{i,j}$ in $\Lambda$ and $\sum_{i}\zeta_i\otimes\tau_i=\Delta_{\hh}(1)$ in $\hh(X\times X)$.
\end{lem}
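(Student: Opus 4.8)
The plan is to establish the correspondence between motivic decompositions $M^{\hh}(X)\cong\bigoplus_i\Lambda(\alpha_i)$ and pairs of dual bases by unwinding the definitions of the category $\Ms_{\hh}$ and of the functor $M^{\hh}$. First I would recall that a morphism $M^{\hh}(X)\to\Lambda(\alpha_i)$ is by definition an element of $Corr_{-\alpha_i}(X,pt)=\hh_{\dim X-\alpha_i}(X\times pt)=\hh^{\alpha_i}(X)$, while a morphism $\Lambda(\alpha_i)\to M^{\hh}(X)$ is an element of $Corr_{\alpha_i}(pt,X)=\hh_{\alpha_i}(pt\times X)=\hh_{\alpha_i}(X)$; this is exactly the identification recorded just before the lemma, $\hh^i(X)=Hom(M^{\hh}(X),\Lambda(i))$ and $\hh_i(X)=Hom(\Lambda(i),M^{\hh}(X))$. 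So a candidate isomorphism $\phi\colon M^{\hh}(X)\to\bigoplus_i\Lambda(\alpha_i)$ is precisely a tuple $(\tau_i)$ with $\tau_i\in\hh^{\alpha_i}(X)$, and a candidate inverse $\psi\colon\bigoplus_i\Lambda(\alpha_i)\to M^{\hh}(X)$ is a tuple $(\zeta_i)$ with $\zeta_i\in\hh_{\alpha_i}(X)$.

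Next I would translate the two conditions ``$\phi\circ\psi=\mathrm{id}$'' and ``$\psi\circ\phi=\mathrm{id}$'' into the stated formulas using the correspondence product. For the composite $\Lambda(\alpha_j)\xrightarrow{\zeta_j}M^{\hh}(X)\xrightarrow{\tau_i}\Lambda(\alpha_i)$, the correspondence product $\tau_i\circ\zeta_j=(p_{13})_{\hh}\big((p_{23})^{\hh}(\tau_i)\cdot(p_{12})^{\hh}(\zeta_j)\big)$ over $pt\times X\times pt$ collapses, by the projection formula and the fact that the outer factors are points, to $\pi_{\hh}(\tau_i\cdot\zeta_j)\in\Lambda$, where $\pi\colon X\to pt$ is the structure map. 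Hence $\phi\circ\psi=\mathrm{id}_{\bigoplus\Lambda(\alpha_i)}$ is equivalent to $\pi_{\hh}(\tau_i\zeta_j)=\delta_{i,j}$. For the other composite $M^{\hh}(X)\xrightarrow{\phi}\bigoplus\Lambda(\alpha_i)\xrightarrow{\psi}M^{\hh}(X)$, the relevant correspondence product over $X\times pt\times X$ identifies $\sum_i\zeta_i\circ\tau_i$ with $\sum_i\zeta_i\times\tau_i\in\hh(X\times X)=Corr_0(X,X)$ (the Künneth-type pushforward of $\tau_i$ on the first factor against $\zeta_i$ on the second), so $\psi\circ\phi=\mathrm{id}_{M^{\hh}(X)}=\Delta_{\hh}(1)$ is equivalent to $\sum_i\zeta_i\otimes\tau_i=\Delta_{\hh}(1)$ in $\hh(X\times X)$. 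I should also verify the reverse direction: given basis sets satisfying the two relations, the tuples $(\tau_i)$, $(\zeta_i)$ define mutually inverse morphisms, so in particular each family is automatically a $\Lambda$-basis (the relation $\sum\zeta_i\otimes\tau_i=\Delta_{\hh}(1)$ forces spanning, and $\pi_{\hh}(\tau_i\zeta_j)=\delta_{i,j}$ forces linear independence — here one uses that $\hh_*(X)$ of a split cellular-type $X$ is a free $\Lambda$-module, or one simply reads freeness off from the isomorphism itself).

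The main obstacle I anticipate is purely bookkeeping: carefully carrying out the two correspondence-product computations — tracking the gradings $\dim$ shifts through $Corr_m$, handling the irreducible-components conventions in the definition of $Hom_{Corr}$, and making the ``outer factor is a point'' simplifications rigorous via the base-change and projection formula axioms of~\cite[Def.~1.1.2]{LM}. There is also a small subtlety in that $X$ need not be irreducible, so one must phrase $\pi_{\hh}$ and the diagonal $\Delta$ componentwise; but since the lemma is stated for $X\in\SmProj_k$ with a single structure morphism $\pi$, I would either assume $X$ connected or note the decomposition over components is straightforward. Once the two product computations are in hand, the equivalence is immediate, and the fact that a pair of mutually inverse morphisms in an additive category is the same data as an isomorphism finishes the proof.
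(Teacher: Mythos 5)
Your proposal is correct and follows essentially the same route as the paper: identify morphisms $M^{\hh}(X)\to\Lambda(\alpha_i)$ and $\Lambda(\alpha_i)\to M^{\hh}(X)$ with elements of $\hh^{\alpha_i}(X)$ and $\hh_{\alpha_i}(X)$, translate the mutually-inverse (projection/inclusion) identities of the direct sum into $\pi_{\hh}(\tau_i\zeta_j)=\delta_{i,j}$ and $\sum_i\zeta_i\otimes\tau_i=\Delta_{\hh}(1)$ via the correspondence product, and read off freeness from the isomorphism $\hh(X)\cong\bigoplus_j Hom(\bigoplus_i\Lambda(\alpha_i),\Lambda(j))$, exactly as the paper does (your explicit correspondence-product computations just spell out what the paper compresses into ``by definition of the direct sum'').
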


\begin{proof}
In the direct sum decomposition $M^{\hh}(X)\cong\bigoplus_i\Lambda(\alpha_i)$ the $i$-th projection $p_i\colon M^{\hh}(X)\to\Lambda(\alpha_i)$ is defined by an element $\tau_i\in\hh^{\alpha_j}(X)$ and the $i$-th inclusion $\imath_i\colon\Lambda(\alpha_i)\to\hh(X)$ is defined by an element $\zeta_i\in\hh_{\alpha_i}(X)$. Then by definition of the direct sum we obtain 
\[
\pi_{\hh}(\tau_i\zeta_j)=\delta_{i,j}\text{ and }\sum_{i}\zeta_i\otimes \tau_i=\Delta_{\hh}(1).
\]
Let us check that $\{\zeta_i\}_i$ form a basis of $\hh({X})$. Indeed, we have $\hh(X)=\bigoplus_j\hh^j(X)=\bigoplus_j Hom_{\Ms_{\hh}}(M^{\hh}(X),\Lambda(j))\cong\bigoplus_j Hom_{\Ms_{\hh}}(\bigoplus_i\Lambda(\alpha_i),\Lambda(j))=\bigoplus_i \Lambda_{\alpha_i-*}$ and $\zeta_i$ are the images of standard generators. So $\{\zeta_i\}_i$ form a $\Lambda$-basis of $\hh(X)$. Finally, since $\{\tau_i\}_i$ are dual to $\{\zeta_i\}_i$, $\{\tau_i\}_i$ is also a basis.
\end{proof}

\begin{rem}\label{Cohomology_of_cellular_variety}
Observe that any isomorphism $M^{\hh}(X)\cong\bigoplus\Lambda(\alpha_i)$ gives rise (canonically) to an isomorphism $\hh^*(X)\cong\bigoplus_{i}\Lambda^{*-\alpha_i}$.
\end{rem}

\section{Filtration on the cohomology ring}\label{sec:filtration}

In the present section we construct a filtration on the oriented cohomology $\hh(X)$ of a cellular variety $X$ which will play an important role in the sequel.

\medskip

\begin{prop}~\label{structure} 
Assume that $X$ is a cellular variety over $k$ with the cellular decomposition $X=X_0\supseteq X_1\supseteq\ldots\supseteq X_n\supseteq \emptyset$ where $X_i\setminus X_{i+1}=\coprod_{c_i}\mathbb{A}^{\alpha_i}$. Then
\begin{itemize}
\item[(1)] the $\hh$-motive of $X$ splits as $M^{\hh}(X)=\bigoplus_i\Lambda(\alpha_i)^{\oplus c_i}$;
\item[(2)] the K\"unneth formula holds, i.e. the natural map $\hh(X)\otimes_{\Lambda}\hh(X)\to\hh(X\times X)$ is an isomorphism;
\item[(3)] the specialization maps $\nu_{X}\colon\Omega(X)\otimes\Lambda\to\hh({X})$ and $\nu_{{X}\times{X}}\colon\Omega({X}\times{X})\otimes\Lambda\to\hh({X}\times{X})$ are isomorphisms.
\end{itemize}
\end{prop}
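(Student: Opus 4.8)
The plan is to prove all three statements simultaneously by induction on the number $n$ of strata in the cellular filtration, peeling off the open stratum $U = X \setminus X_1 = \coprod_{c_0} \mathbb{A}^{\alpha_0}$ and its closed complement $Z = X_1$, which is again cellular with one fewer stratum. The basic tool is the localization (excision) long exact sequence for $\hh$ together with the homotopy invariance axiom, which gives $\hh_*(U) = \hh_*(\coprod_{c_0}\mathbb{A}^{\alpha_0}) \cong \Lambda^{\oplus c_0}$ concentrated in the appropriate degree. By induction $\hh_*(Z)$ is a free $\Lambda$-module with basis given (via part (1) applied to $Z$) by the classes $\zeta_i$ for $i \geq 1$, so the localization sequence $\hh_*(Z) \to \hh_*(X) \to \hh_*(U) \to 0$ splits and $\hh_*(X)$ is free on the full collection of cell classes. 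The first task is to carefully set up this localization sequence and check that it degenerates: one uses that the relevant $\hh$-groups of $Z$ and $U$ are concentrated in the correct codimensions (because the cells all have the predicted dimension), so the connecting maps vanish. This simultaneously establishes freeness of $\hh(X)$ over $\Lambda$ and produces the pushforward classes $\zeta_i \in \hh_{\alpha_i}(X)$.

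Next I would establish part (1). Having the $\zeta_i$'s, I need dual classes $\tau_i \in \hh^{\alpha_i}(X)$ with $\pi_\hh(\tau_i \zeta_j) = \delta_{ij}$ and $\sum_i \zeta_i \otimes \tau_i = \Delta_\hh(1)$, so that Lemma~\ref{Tate_basis} applies and yields $M^\hh(X) = \bigoplus_i \Lambda(\alpha_i)$ with multiplicities $c_i$. The standard way is the Rost-style argument: first prove the Künneth statement (2) and then observe that $\Delta_\hh(1) \in \hh(X\times X) \cong \hh(X)\otimes_\Lambda \hh(X)$ can be written in the basis $\{\zeta_i\}$ of the left factor as $\Delta_\hh(1) = \sum_i \zeta_i \otimes \tau_i$ for uniquely determined $\tau_i \in \hh(X)$; projecting via $\pi$ and using the identity $\Delta_\hh(1)$ acts as the identity correspondence shows $\tau_i$ are the dual basis with the right codegree. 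Alternatively one can produce the $\tau_i$ directly by the same localization/induction, building a compatible dual cellular basis and checking the pairing is unipotent-triangular with respect to the filtration, hence invertible over $\Lambda$. Either route gives (1); I would likely prove (2) first since it is needed for the clean formulation of (1).

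For part (2), the Künneth isomorphism $\hh(X)\otimes_\Lambda \hh(X) \to \hh(X\times X)$, the point is that $X\times X$ is itself cellular — the product of two cellular varieties is cellular, with cells the products of cells — so by the argument of part (1) applied to $X \times X$ its $\hh$-theory is free over $\Lambda$ on the product cell classes; comparing ranks and checking the natural map sends tensor products of cell classes to product cell classes (which follows from the projection formula and the multiplicativity of pushforward along products of closed embeddings) gives the isomorphism. Finally, part (3): the specialization map $\nu_X \colon \Omega(X)\otimes_{\mathbb{L}}\Lambda \to \hh(X)$ is a natural transformation commuting with pushforwards, and since both sides are now known (by (1), applied once with $\hh = \Omega$ and once with general $\hh$) to be free $\Lambda$-modules on the cell classes $\zeta_i^\Omega \otimes 1$ and $\zeta_i^\hh$ respectively, and $\nu_X$ sends $\zeta_i^\Omega \otimes 1$ to $\zeta_i^\hh$ (because $\nu_X$ commutes with the pushforward defining these classes and $\nu_{pt}$ is the identity on $\Lambda$), the map $\nu_X$ is an isomorphism of free modules; the same argument applied to the cellular variety $X\times X$ handles $\nu_{X\times X}$.

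\medskip

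The main obstacle I expect is the careful handling of the degeneration of the localization sequence and the compatibility of the inductively-constructed cell classes with the product structure needed for (2): one must verify that the pushforward classes built on $X$ and on $X \times X$ genuinely match up under the external product, which rests on the projection formula and on the fact that products of closed cellular subvarieties have the expected codimension so no extension problems or degree shifts intervene. Everything else is bookkeeping with the Levine–Morel axioms (homotopy invariance, localization, projection formula).
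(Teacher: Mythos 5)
Your plan hinges on a localization (excision) sequence for $\hh$, and that is exactly what a Levine--Morel oriented cohomology theory does not provide: the axioms of \cite[Def.~1.1.2]{LM} are homotopy invariance, the projective bundle formula and the pushforward/pullback compatibilities, but no localization sequence. The paper is careful about this -- localization for the associated Borel--Moore theory is introduced only as an \emph{additional} hypothesis in the later sections, while Proposition~\ref{structure} is stated (and used) for an arbitrary $\hh$. So the inductive step ``$\hh_*(Z)\to\hh_*(X)\to\hh_*(U)\to 0$'' is not available in the generality required. There is a second, related problem: the closed stratum $Z=X_1$ of a cellular filtration is in general singular, and $\hh$ is only defined on $\Sm_k$, so even for theories that do satisfy localization your sequence would have to be formulated in a Borel--Moore extension of $\hh$, which again is extra structure not granted by the hypotheses. (Also, for $\Omega$ the localization statement is only right exactness; there are no connecting maps whose vanishing one could invoke, and injectivity of $\hh_*(Z)\to\hh_*(X)$ -- needed for your splitting -- requires a separate argument.)

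The way the paper circumvents all of this is the key idea your proposal is missing: prove everything first for the universal theory $\Omega$, where the splitting of the motive $M^{\Omega}(X)$ into twisted Tate motives is already known (the Chow motive of a cellular variety splits by \cite[Cor.~66.4]{EKM}, and \cite[Cor.~2.9]{VY} lifts this to $\Omega$), extract from Lemma~\ref{Tate_basis} the dual systems $\zeta^{\Omega}_{i,j}$, $\tau^{\Omega}_{i,j}$ with $\pi_{\Omega}(\zeta\tau)=\delta$ and $\Delta_{\Omega}(1)=\sum\zeta\otimes\tau$, and then transport them to any $\hh$ via the canonical transformation $\nu\colon\Omega(-)\otimes_{\mathbb{L}}\Lambda\to\hh(-)$, which exists by universality and commutes with pullbacks and pushforwards; Lemma~\ref{Tate_basis} applied in the reverse direction then gives (1), and (2), (3) follow by identifying both sides with direct sums of shifted copies of $\Lambda$ via the motivic decomposition. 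Your induction would be a fine proof for $\Omega$ itself (or for theories equipped with a localization property and a Borel--Moore extension), but as written it does not establish the proposition for a general Levine--Morel theory $\hh$; you need the detour through $\Omega$ and $\nu$, or an equivalent universality argument.
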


\begin{proof}
By~\cite[Cor.~66.4]{EKM} the Chow motive $M^{\tCH}({X})$ splits, then~\cite[Cor. 2.9]{VY} implies that the motive $M^{\Omega}({X})$ splits into a sum of twisted Tate motives $M^{\Omega}({X})=\bigoplus_{i\in I}\mathbb{L}(\alpha_i)^{\oplus c_i}$. By~Lemma~\ref{Tate_basis} there are elements $\zeta_{i,j}^{\Omega}\in\hh_{\alpha_i}({X})$ and $\tau_{i,j}^{\Omega}\in\hh^{\alpha_i}({X})$, $j\in\{1..c_i\}$ such that $\pi_{\Omega}(\zeta_{i,j}^{\Omega}\tau_{i',j'}^{\Omega})=\delta_{(i,j),(i',j')}$ and $\Delta_{\Omega}(1)=\sum_{i,j}\zeta_i^{\Omega}\otimes\tau_i^{\Omega}$. Denote $\zeta_{i,j}^{\hh}=\nu(\zeta_{i,j}^{\Omega}\otimes 1)$ and $\tau_{i,j}^{\hh}=\nu(\tau_{i,j}^{\Omega}\otimes 1)$. Since $\nu$ commutes with pullbacks and pushforwards, $\pi_{\hh}(\zeta_{i,j}^{\hh}\tau_{i',j'}^{\hh})=\delta_{(i,j),(i',j')}$ and $\Delta_{\hh}(1)=\sum_{i,j}\zeta_{i,j}^{\hh}\otimes\tau_{i,j}^{\hh}.$ Then by~Lemma~\ref{Tate_basis} we have $M^{\hh}({X})=\bigoplus_i\Lambda(\alpha_i)^{\oplus c_i}$, so $(1)$ holds. 

\medskip

The K\"unneth map fits into the diagram
\[
\begin{xymatrix}{
\hh({X})\otimes_{\Lambda}\hh({X})\ar[r]\ar@{=}[d] & \hh({X}\times{X})\ar@{=}[d]\\
\left(\bigoplus_i\Lambda^{*-\alpha_i}\right)\otimes_{\Lambda}(\bigoplus_j\Lambda^{*-\alpha_j})\ar[r] & \bigoplus_{i,j}\Lambda^{*-\alpha_i-\alpha_j}}
\end{xymatrix}
\]
where the bottom arrow is an isomorphism, so the K\"unneth formula $(2)$ holds.

\medskip

Note that the natural map $\nu_{{X}}$ can be factored as follows 
\[
\nu_{{X}}\colon\Omega({X})\otimes\Lambda=\oplus_{m}Hom_{\Ms_{\Omega\otimes\Lambda}}(\bigoplus\Lambda(\alpha_i),\Lambda(m))\to
Hom_{\Ms_{\hh}}(\bigoplus\Lambda(\alpha_i),\Lambda(m))=\hh({X}).
\]
Thus $\nu_{X}$ is an isomorphism. The same reasoning proves the statement for $\nu_{X\times X}$, hence, $(3)$ holds.
\end{proof}

\begin{dfn}\label{filtration}
Let $X$ be a cellular variety. Consider two basis sets $\zeta_i\in \hh_{\alpha_i}(X)$ and $\tau_i\in\hh^{\alpha_i}(X)$ provided by Proposition~\ref{structure} and Lemma~\ref{Tate_basis}. We define the filtration $\hh^{(l)}({X})$ as the $\Lambda$-linear span
\[
\hh^{(l)}({X})=\bigoplus_{N-\alpha_i\geqslant l}\Lambda\zeta_{i}=\bigoplus_{\alpha_i\geqslant l}\Lambda\tau_{i}.
\]
We denote $\hh^{(l/l+1)}({X})=\hh^{(l)}({X})/\hh^{(l+1)}({X})$ and $Gr^*\hh(X)=\bigoplus_{l}\hh^{(l/l+1)}(X)$ to be the associated graded group. Lemma~\ref{filtr_prod} implies that the latter is a graded ring.
\end{dfn}

\medskip

\begin{rem}~\label{topological_filtration}
In the case when the theory $\hh$ is generically constant and satisfies the localization property, the filtration introduced above coincides with the topological filtration on $\hh(X)$, i.e. with the filtration where the $l$-th term is generated over $\Lambda$ by classes $[Z\to X]$ of projective morphisms $Z\to X$ birational on its image and $\dim X-\dim Z\leqslant l$. This fact follows from the generalized degree formula~\cite[Thm.~4.4.7]{LM}.
\end{rem}

\medskip

\begin{lem}~\label{filtr_prod} 
$\hh^{(l_1)}({X})\cdot\hh^{(l_2)}({X})\subseteq\hh^{(l_1+l_2)}({X})$.
\end{lem}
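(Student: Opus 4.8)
The plan is to reduce the multiplicativity of the filtration on $\hh(X)$ to the corresponding (and more transparent) statement for algebraic cobordism $\Omega$, and then transport it along the specialization isomorphism $\nu_X$ of Proposition~\ref{structure}(3). The point is that the filtration of Definition~\ref{filtration} is defined using a basis $\{\tau_i\}$ lifted from a Tate basis $\{\tau_i^\Omega\}$ of $\Omega(X)$, and $\nu_X$ sends $\tau_i^\Omega \mapsto \tau_i$ while being a ring homomorphism. So the $\Lambda$-span $\hh^{(l)}(X) = \bigoplus_{\alpha_i \geq l}\Lambda\tau_i$ is exactly the image under $\nu_X$ of the analogously defined submodule $\Omega^{(l)}(X) = \bigoplus_{\alpha_i \geq l}\mathbb{L}\,\tau_i^\Omega \subseteq \Omega(X)$ (after base change to $\Lambda$). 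Hence it suffices to show $\Omega^{(l_1)}(X)\cdot\Omega^{(l_2)}(X)\subseteq\Omega^{(l_1+l_2)}(X)$ in $\Omega(X)\otimes_{\mathbb L}\Lambda$; applying the ring homomorphism $\nu_X$ then yields the claim over $\Lambda$.

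First I would make the reduction precise: write $\tau_i\cdot\tau_j = \sum_m \lambda_{ij}^m\,\tau_m$ with $\lambda_{ij}^m \in \Lambda$, which is possible since the $\tau_m$ form a $\Lambda$-basis of $\hh(X)$ by Lemma~\ref{Tate_basis}. The structure constants can be extracted by pairing against the dual basis: $\lambda_{ij}^m = \pi_{\hh}(\tau_i\tau_j\zeta_m)$. Because $\nu_X$ commutes with products, pullbacks and pushforwards, this equals $\nu\big(\pi_\Omega(\tau_i^\Omega\tau_j^\Omega\zeta_m^\Omega)\big)$, i.e. the structure constants for $\hh$ are the images of those for $\Omega$. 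So I only need: if $\alpha_i \geq l_1$ and $\alpha_j \geq l_2$, then $\pi_\Omega(\tau_i^\Omega\tau_j^\Omega\zeta_m^\Omega) = 0$ whenever $\alpha_m < l_1 + l_2$.

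The key step is this vanishing in $\Omega$, which I would get by a degree/codimension count. The element $\tau_i^\Omega\tau_j^\Omega$ lies in $\Omega^{\alpha_i+\alpha_j}(X)$, so $\tau_i^\Omega\tau_j^\Omega\zeta_m^\Omega \in \Omega_{\alpha_m - \alpha_i - \alpha_j}(X)$, and its pushforward to a point lives in $\Omega_{\alpha_m-\alpha_i-\alpha_j}(\mathrm{pt}) = \mathbb{L}^{\alpha_i+\alpha_j-\alpha_m}$. Now $\Omega^{<0}(\mathrm{pt}) = 0$ since $\mathbb{L}$ is non-negatively graded, so the pairing vanishes as soon as $\alpha_i + \alpha_j - \alpha_m < 0$, i.e. $\alpha_m > \alpha_i + \alpha_j \geq l_1 + l_2$. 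This handles $\alpha_m > l_1+l_2$; to also cover $\alpha_m = l_1+l_2$ with $\alpha_i > l_1$ or $\alpha_j > l_2$, the same inequality $\alpha_i+\alpha_j > \alpha_m$ applies, giving vanishing again. The only surviving structure constants $\lambda_{ij}^m$ with $\alpha_i\geq l_1$, $\alpha_j\geq l_2$ are those with $\alpha_m \geq l_1+l_2$ (including the borderline case $\alpha_i=l_1,\alpha_j=l_2,\alpha_m=l_1+l_2$, where the coefficient lies in $\mathbb{L}^0 = \mathbb{Z}$ and need not vanish — but that is exactly an element of $\hh^{(l_1+l_2)}(X)$, which is fine). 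Therefore $\tau_i\tau_j \in \hh^{(l_1+l_2)}(X)$ for all such $i,j$, and the lemma follows.

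The main obstacle is essentially bookkeeping: making sure the reduction to $\Omega$ via $\nu_X$ is stated cleanly (in particular that $\nu_X$ identifies $\hh(X)$ with $\Omega(X)\otimes_{\mathbb L}\Lambda$ \emph{as rings} and matches the two bases), and being careful that the degree argument is the only input — no geometry of the cells is needed beyond what Proposition~\ref{structure} already provides. I expect no genuine difficulty; the estimate is forced by the grading on the Lazard ring.
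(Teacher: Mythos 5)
Your strategy is the same as the paper's: expand the product in the basis lifted from $\Omega$, bound the degrees of the structure constants by the grading of the Lazard ring, and transport via $\nu_X$. However, the key degree step is stated with the grading of $\mathbb{L}$ reversed, and the inequality you derive points the wrong way. In the cohomological grading used throughout the paper, $\mathbb{L}=\Omega^*(pt)$ is concentrated in \emph{non-positive} degrees: $\mathbb{L}^0=\mathbb{Z}$, $\mathbb{L}^n=0$ for $n>0$, while $\mathbb{L}^{-n}\neq 0$ for $n>0$ (it contains $[\mathbb{P}^n]$, etc.); equivalently $\Omega_d(pt)=0$ for $d<0$. Your assertion that $\Omega^{<0}(pt)=0$ because ``$\mathbb{L}$ is non-negatively graded'' mixes the homological and cohomological indexings and is false in the paper's convention. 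As a result, your conclusion that the pairing $\pi_\Omega(\tau_i^\Omega\tau_j^\Omega\zeta_m^\Omega)\in\mathbb{L}^{\alpha_i+\alpha_j-\alpha_m}$ vanishes when $\alpha_i+\alpha_j-\alpha_m<0$, i.e.\ for $\alpha_m>\alpha_i+\alpha_j$, is exactly the opposite of what the degree count gives and of what you need: you had correctly identified the goal as killing the structure constants with $\alpha_m<l_1+l_2$, but your step only kills those with $\alpha_m>\alpha_i+\alpha_j$, which would leave precisely the low-codimension terms you must exclude. The closing sentence asserting that only $\alpha_m\geq l_1+l_2$ survives therefore does not follow from the argument as written.

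The repair is immediate and recovers the paper's proof verbatim: $\lambda_{ij}^m=\pi_\Omega(\tau_i^\Omega\tau_j^\Omega\zeta_m^\Omega)$ lies in $\Omega_{\alpha_m-\alpha_i-\alpha_j}(pt)=\mathbb{L}^{\alpha_i+\alpha_j-\alpha_m}$, and since $\Omega_d(pt)=0$ for $d<0$ (equivalently $\mathbb{L}^{>0}=0$) this vanishes whenever $\alpha_m<\alpha_i+\alpha_j$. Hence for $\alpha_i\geq l_1$, $\alpha_j\geq l_2$ every surviving term in the expansion has $\alpha_m\geq\alpha_i+\alpha_j\geq l_1+l_2$, and applying $\nu$ (which sends the $\Omega$-basis to the $\hh$-basis and is compatible with products and pushforwards) gives $\tau_i^{\hh}\tau_j^{\hh}\in\hh^{(l_1+l_2)}(X)$. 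This is precisely the paper's argument, which expands the product in $\Omega(X)$ and uses $\deg a_l\leq 0$ for the coefficients $a_l\in\mathbb{L}$; once the grading is corrected, your reduction via $\nu_X$ and the extraction of structure constants by pairing against the dual basis are both fine.
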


\begin{proof}
We have  $\tau^{\Omega}_i\tau^{\Omega}_j=\sum_l a_l\zeta^{\Omega}_l$ in $\Omega({X})$ for some $a_l\in\mathbb{L}$. Then $\alpha_i+\alpha_j=\deg(a_l)+\alpha_l$. Since $\deg(a_l)\leqslant 0$,  $\alpha_l\geqslant \alpha_i+\alpha_j\geqslant l_1+l_2$ for any nontrivial $a_l$. Since $\zeta^{\hh}_i=\nu(\zeta^{\Omega}_i\otimes 1)$ we have $\zeta^{\hh}_i\zeta^{\hh}_j=\sum_l (a_l\otimes 1)\zeta^{\hh}_l$ with $\alpha_l\geqslant \alpha_i+\alpha_j\geqslant l_1+l_2$. So $\zeta^{\hh}_i\zeta^{\hh}_j\in\hh^{(l_1+l_2)}({X})$.
\end{proof}

\begin{prop}\label{Psi_existence} For a cellular $X$ there is a graded ring isomorphism:
\[
\Psi\colon\bigoplus_{i=0}^{N}\hh^{(i/i+1)}({X})\to\CH({X},\Lambda).
\]
\end{prop}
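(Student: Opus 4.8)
The plan is to construct the isomorphism $\Psi$ by identifying both sides with the same free $\Lambda$-module and then checking multiplicativity. First I would apply Proposition~\ref{structure} to the theory $\hh$ \emph{and} to the Chow theory $\CH(-,\Lambda)$ simultaneously: the same cellular decomposition produces a $\Lambda$-basis $\{\zeta_i^{\hh},\tau_i^{\hh}\}$ of $\hh(X)$ and a $\Lambda$-basis $\{\zeta_i^{\CH},\tau_i^{\CH}\}$ of $\CH(X,\Lambda)$, both indexed by the \emph{same} set of cells, with $\zeta_i^{\hh}$ (resp. $\tau_i^{\hh}$) sitting in degree $\alpha_i$. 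Moreover the specialization $\nu_X$ of Proposition~\ref{structure}(3), composed with the canonical map $\Omega(X)\otimes_{\mathbb L}\Lambda^0\to\CH(X,\Lambda^0)$ (or more directly the universal property applied to $\CH(-,\Lambda)$ as an oriented theory with the additive formal group law), sends $\tau_i^{\Omega}$ to $\tau_i^{\CH}$ and $\tau_i^{\hh}$, so the two bases are compatible under specialization. This is what makes the combinatorics line up.

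Next I would define $\Psi$ on the associated graded piece $\hh^{(i/i+1)}(X)$ by sending the class of $\tau_j^{\hh}$ (for $\alpha_j=i$) to $\tau_j^{\CH}\in\CH^i(X,\Lambda)$. By Definition~\ref{filtration}, $\hh^{(i)}(X)=\bigoplus_{\alpha_j\geqslant i}\Lambda\tau_j^{\hh}$, so $\hh^{(i/i+1)}(X)$ is freely generated over $\Lambda$ by the images of $\{\tau_j^{\hh}:\alpha_j=i\}$; hence $\Psi$ is a well-defined $\Lambda$-module isomorphism in each degree, and it is graded by construction. It remains to verify that $\Psi$ respects multiplication. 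For this I would compute $\tau_i^{\hh}\cdot\tau_j^{\hh}=\sum_l (b_l\otimes 1)\,\tau_l^{\hh}$ in $\hh(X)$, where by the proof of Lemma~\ref{filtr_prod} the structure constants come from the corresponding product $\tau_i^{\Omega}\tau_j^{\Omega}=\sum_l a_l\,\tau_l^{\Omega}$ in $\Omega(X)$ with $a_l\in\mathbb L$ of degree $\alpha_l-\alpha_i-\alpha_j\leqslant 0$. Passing to $\hh^{((\alpha_i+\alpha_j)/(\alpha_i+\alpha_j+1))}(X)$ kills every term with $\alpha_l>\alpha_i+\alpha_j$, so only the terms with $a_l\in\mathbb L^0=\mathbb Z$ survive; those same constants govern the product $\tau_i^{\CH}\tau_j^{\CH}=\sum_{\alpha_l=\alpha_i+\alpha_j} a_l\,\tau_l^{\CH}$ in $\CH(X,\Lambda)$, because $\CH(-,\Lambda)$ is the specialization of $\Omega$ at the additive formal group law and all coefficients of positive $\mathbb L$-degree die there. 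Hence $\Psi$ is multiplicative.

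The main obstacle I anticipate is the bookkeeping that makes "the same structure constants" precise: one must check that the basis $\{\tau_i^{\CH}\}$ one gets from applying Proposition~\ref{structure} to $\CH(-,\Lambda)$ is genuinely the specialization of $\{\tau_i^{\hh}\}$ (equivalently of $\{\tau_i^{\Omega}\}$), i.e. that the natural transformations $\Omega\otimes\Lambda\to\hh$ and $\Omega\otimes\Lambda\to\CH(-,\Lambda)$ are compatible on the chosen Tate bases. This follows formally because both are constructed from the same $\Omega$-motivic decomposition via Lemma~\ref{Tate_basis} and $\nu$ commutes with pushforwards and pullbacks, but it needs to be stated carefully. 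A secondary, purely formal point is checking that $\Psi$ preserves the unit, which is immediate since $1\in\hh^{(0)}(X)$ corresponds to $\tau$ with $\alpha=0$ and specializes to $1\in\CH^0(X,\Lambda)$. Everything else — additivity, gradedness, bijectivity in each degree — is immediate from Definition~\ref{filtration}, so no further calculation is required.
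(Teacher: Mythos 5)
Your proposal is correct and is essentially the paper's own proof: the paper likewise defines $\Psi$ on the associated graded pieces by sending the class of $\tau_i^{\Omega}$ (equivalently $\tau_i^{\hh}$) to $\tau_i^{\tCH}$, and verifies multiplicativity by expanding $\tau_i^{\Omega}\tau_j^{\Omega}=\sum_m a_m\tau_m^{\Omega}$ and using that only the structure constants $a_m\in\mathbb{L}^0=\mathbb{Z}$ survive both in the associated graded and under the specialization $\mathbb{L}\to\mathbb{Z}$ defining $\CH$. The only cosmetic difference is that the paper first reduces to $\hh=\Omega$ via Proposition~\ref{structure} and then tensors, while you carry the general $\hh$ along and import the structure constants from $\Omega$ directly; the underlying degree argument is identical.
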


\begin{proof}
By~Proposition~\ref{structure} it is sufficient to prove the statement for $\hh=\Omega$. Observe that $\Omega^{(l/l+1)}({X})$ is a free $\mathbb{L}$-module with the basis $\tau_{i}^{\Omega}+\hh^{(l+1)}({X})$ with $\alpha_i=l$ and $\CH^i({X},\mathbb{L})$ is a free $\mathbb{L}$-module with basis $\tau^{\tCH}_i$ with $\alpha_i=l$. Thus the $\mathbb{L}$-module homomorphism $\Psi_l$ defined by
\[
\Psi_l(\tau_{i}^{\Omega}+\hh^{(i+1)}({X}))=\tau^{\tCH}_i
\]
is an isomorphism.

Let us check that $\Psi=\bigoplus\Psi_l$ preserves multiplication. For any $i,j$ we have 
\[
\tau^{\Omega}_i\tau^{\Omega}_j=\sum_m a_m\tau^{\Omega}_m \eqno{(*)}
\] 
for some $a_m\in\mathbb{L}$. Then for any $m$ we have $\deg(a_m)+\alpha_m=\alpha_i+\alpha_j$. Then in $\hh^{(\alpha_i+\alpha_j/\alpha_i+\alpha_j+1)}$ we have 
\[
\tau^{\Omega}_i\tau^{\Omega}_j=\sum_{\alpha_m=\alpha_i+\alpha_j} a_m\tau^{\Omega}_m \text{ modulo }+\hh^{(\alpha_i+\alpha_j+1)}({X})
\]
Observe that $\mathbb{L}^0=\mathbb{Z}$ and for all $a_m\in \mathbb{L}$ such that $\deg(a_m)<0$ we have that $a_m\otimes 1_{\mathbb{Z}}=0$ in $\mathbb{Z}$. Thus tensoring ($*$) with $1_{\mathbb{Z}}$ we get
\[
\tau^{\tCH}_i\tau^{\tCH}_j=\sum_{\alpha_m=0} (a_m\otimes 1)\tau^{\tCH}_m.
\]
So $\Psi_{\alpha_i+\alpha_j}(\tau^{\Omega}_i+\hh^{(\alpha_i+1)}({X})\cdot \tau_j^{\Omega}+\hh^{(\alpha_j+1)}({X}))=\tau^{\tCH}_i\cdot\tau_j^{\tCH}$. Hence, $\Psi$ is a graded ring isomorphism.
\end{proof}

\begin{lem}
$\Psi(\zeta_i^{\hh}+\hh^{(\alpha_i+1)}({X}))=\zeta_i^{\tCH}$.
\end{lem}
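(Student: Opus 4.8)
The plan is to reduce immediately to $\hh=\Omega$ and then obtain the identity by expanding $\zeta_i^\Omega$ in the $\mathbb{L}$-basis $\{\tau_j^\Omega\}$ and tracking cohomological degrees. By Proposition~\ref{structure}(3) the specialization $\nu_X$ is an isomorphism; it is $\mathbb{L}$-linear (via $\mathbb{L}\to\Lambda$), it carries $\zeta_j^\Omega,\tau_j^\Omega$ to $\zeta_j^\hh,\tau_j^\hh$, hence it takes $\Omega^{(\bullet)}(X)$ onto $\hh^{(\bullet)}(X)$ and intertwines the isomorphism $\Psi$ for $\Omega$ with the one for $\hh$; since $\zeta_i^{\tCH}$ is the same ($\mathbb{Z}$-coefficient) Chow class in both pictures, it suffices to prove $\Psi\bigl(\zeta_i^\Omega+\Omega^{(N-\alpha_i+1)}(X)\bigr)=\zeta_i^{\tCH}$ in $\CH^{N-\alpha_i}(X,\mathbb{L})$. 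Note that $\zeta_i^\Omega$ does lie in $\Omega^{(N-\alpha_i)}(X)$ (this is immediate from the $\zeta$-description of the filtration in Definition~\ref{filtration}, $N-\alpha_i$ being the codimension of $\zeta_i^\Omega$), so the left-hand side makes sense.

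Now expand $\zeta_i^\Omega=\sum_j b_{ij}\tau_j^\Omega$ with $b_{ij}\in\mathbb{L}$. Since $\{\tau_j^\Omega\}$ is a homogeneous $\mathbb{L}$-basis of $\Omega(X)$ with $\deg\tau_j^\Omega=\alpha_j$ (Proposition~\ref{structure}(1) and Remark~\ref{Cohomology_of_cellular_variety}) and $\zeta_i^\Omega\in\Omega^{N-\alpha_i}(X)$ is homogeneous, each nonzero $b_{ij}$ is homogeneous of degree $(N-\alpha_i)-\alpha_j$; as $\mathbb{L}$ is concentrated in non-positive degrees this forces $\alpha_j\geqslant N-\alpha_i$, with $b_{ij}\in\mathbb{L}^0=\mathbb{Z}$ when $\alpha_j=N-\alpha_i$, and with $b_{ij}\tau_j^\Omega\in\Omega^{(\alpha_j)}(X)\subseteq\Omega^{(N-\alpha_i+1)}(X)$ when $\alpha_j>N-\alpha_i$. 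Hence modulo $\Omega^{(N-\alpha_i+1)}(X)$ we have $\zeta_i^\Omega\equiv\sum_{\alpha_j=N-\alpha_i}b_{ij}\tau_j^\Omega$ with integer coefficients, and applying $\Psi$ — which by Proposition~\ref{Psi_existence} sends the class of $\tau_j^\Omega$ to $\tau_j^{\tCH}$ — gives $\Psi(\zeta_i^\Omega+\Omega^{(N-\alpha_i+1)}(X))=\sum_{\alpha_j=N-\alpha_i}b_{ij}\tau_j^{\tCH}$. On the other hand, applying the Chow specialization $s\colon\Omega(X)\to\CH(X,\mathbb{Z})$, which sends $\tau_j^\Omega\mapsto\tau_j^{\tCH}$ and $\zeta_i^\Omega\mapsto\zeta_i^{\tCH}$ and annihilates $\mathbb{L}^{<0}\cdot\Omega(X)$, to the full equality $\zeta_i^\Omega=\sum_j b_{ij}\tau_j^\Omega$ yields $\zeta_i^{\tCH}=\sum_{\alpha_j=N-\alpha_i}b_{ij}\tau_j^{\tCH}$ (the terms with $\alpha_j>N-\alpha_i$ drop out since $s(b_{ij})=0$). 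Comparing the two expressions proves the $\Omega$-case, and the general case follows by transporting along $\nu_X$ as above.

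The argument is short, and the only ingredient not already contained in the proof of Proposition~\ref{Psi_existence} is the one point I would be careful about: the compatibility of the chosen bases under specialization, i.e. that $s(\tau_j^\Omega)=\tau_j^{\tCH}$ and $s(\zeta_i^\Omega)=\zeta_i^{\tCH}$, which is exactly how the Chow and cobordism decompositions of $M(X)$ are matched up in the first place (Proposition~\ref{structure}); granting this, I expect no real obstacle. (Alternatively one could argue purely by duality: $\Psi$ is a graded ring isomorphism, $\pi_\Omega$ and $\pi_{\CH}$ induce compatible maps from the top graded pieces to $\mathbb{L}$, and the relations $\pi_\Omega(\zeta_i^\Omega\tau_j^\Omega)=\delta_{ij}=\pi_{\CH}(\zeta_i^{\tCH}\tau_j^{\tCH})$ pin down $\Psi(\zeta_i^\Omega+\Omega^{(N-\alpha_i+1)}(X))$ among $\{\zeta_j^{\tCH}:\alpha_j=\alpha_i\}$ to be $\zeta_i^{\tCH}$.)
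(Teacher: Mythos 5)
Your proof is correct and takes essentially the same route as the paper's: reduce to $\hh=\Omega$ via $\nu_X$, expand $\zeta_i^{\Omega}$ in the $\tau_j^{\Omega}$-basis, use that $\mathbb{L}$ is concentrated in non-positive degrees to discard everything but the $\mathbb{L}^0=\mathbb{Z}$-terms modulo $\Omega^{(N-\alpha_i+1)}(X)$, and then identify the image under $\Psi$ with $\zeta_i^{\tCH}$ by specializing to Chow groups. The only difference is that you make explicit the last identification (applying $\Omega\to\CH$ to the full expansion and using that the bases are matched under specialization), which the paper leaves implicit.
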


\begin{proof}
It is sufficient to show the statement for $\hh=\Omega^*$. Consider the expansion $\zeta_i^{\Omega}=\sum a_j\tau_j^{\Omega}$ for some $a_j\in\mathbb{L}$ with $\deg a_j+\alpha_j=N-\alpha_i$. Since $\deg a_j\leqslant 0$ we have  
\[
\zeta_i^{\Omega}=\sum_{\deg a_j=0}a_j\tau_j^{\Omega} \mod \Omega^{(N-\alpha_i+1)}({X}).
\] 
Therefore, $\Psi(\zeta_i^{\Omega}+\Omega^{(N-|w|+1)}({X}))=\zeta_i^{\tCH}$.
\end{proof}

\section{Oriented cohomology of a group}\label{Group cohomology}

In the present section, using the filtration introduced in~\ref{filtration} we
compute algebraic cobordism for some groups of small ranks and for
$PGL_n$, $n\ge 2$. We also construct nontrivial elements in the
difference between the topological and the $\gamma$-filtration on $K_0(G/B)$.

\medskip

In this section we assume that the associated to $\hh$ weak Borel-Moore
homology theory satisfies the localization property of~\cite[Definition 4.4.6]{LM}. Examples of such theories include $\hh(-)=\Omega(-)\otimes\Lambda$, or any oriented cohomology theory in the sense of Panin-Smirnov~\cite{PaninPush2}.

\medskip

Consider the variety $X=G/B$ where $G$ is a split semisimple algebraic
group. Let $\pi_{G/B}\colon G\to X$ be the quotient map. According to Example~\ref{flag} $X$ is cellular. For any $w\in W$ we fix a minimal decomposition $w=s_{i_1}\ldots s_{i_m}$ into simple reflections. Denote the corresponding multiindex by $I_w=(i_1,\ldots i_m)$ and consider the Bott-Samelson variety $X_{I_w}/B$~\cite[\S 11]{CPZ}. Then $p_{I_w}\colon X_{I_w}/B\to G/B$ is a desingularisation of the Schubert cell $X_w.$ Take 
\[
\zeta_{w}=(p_{I_w})_{\hh}(1)\in\hh_{l(w)}(X)=Hom_{\Ms_{\hh}}(\Lambda(l(w)),M^{\hh}(X)
\] 
to be the embedding in the direct sum decomposition $\bigoplus_{w\in
  W}\Lambda(l(w))\cong M^{\hh}(G/B)$. So, with this choice of
isomorphism $M^{\hh}(G/B)\cong\bigoplus_{w\in W}\Lambda(l(w))$ the
basis given by Lemma~\ref{Tate_basis} coincides with the basis $\zeta_{I_w}$ constructed in~\cite[\S 13]{CPZ}. 

\medskip

Let $\Lambda[[T^*]]_F$ be the formal group algebra introduced
by Calm\`es-Petrov-Zainoulline in \cite[\S2]{CPZ}, where
$T^*$ is the character lattice of $T$ and $F$ is the formal group law
of the theory $\hh$. There is the characteristic map $\cs_F\colon\Lambda[[T^*]]_F\to\hh(G/B)$ such that $\cs_F(x_{\lambda})=c_1^{\hh}(\mathcal{L(\lambda)})$ for a character $\lambda$. By~\cite[Prop.~5.1]{GZ} there is a short exact sequence
\begin{equation}\label{eq:group_sequence}
0\to\cs(I_F)\to\hh(G/B)\stackrel{\pi_{G/B}^{\hh}}\longrightarrow\hh(G)\to 0,
\end{equation}
where $I_F$ denotes the ideal in $\Lambda[[T^*]]_F$ generated by $x_{\alpha}$ for $\alpha\in T^*.$
By~\cite[Lem.~4.2]{CPZ} there is a graded algebras isomorphism $\psi\colon \bigoplus_{m=0}^{\infty} I_F^{m}/I_F^{m+1}\to S^*_{\Lambda}(T^*)$ where $S^*_{\Lambda}(T^*)$ denotes the symmetric algebra over $T^*$. Let $F_a$ denote the additive formal group law. We will need the following
\begin{lem}
The following diagram commutes:
\[
\begin{xymatrix}{
\bigoplus_{m=0}^{\infty} I_F^{m}/I_F^{m+1}\ar[r]^-{Gr\cs_F}\ar[d]^{\psi} & \bigoplus_{m=0}^{\infty} \hh^{(m/m+1)}(G/B)\ar[d]^{\Psi}\\
S^*_{\Lambda}(T^*)\ar[r]^{\cs_{F_a}} & \CH(G/B,\Lambda)
}
\end{xymatrix}
\]
\end{lem}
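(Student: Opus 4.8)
The plan is to verify commutativity of the square on generators, since all four functors involved are maps of graded $\Lambda$-algebras (for $Gr\,\cs_F$ this uses that $\cs_F$ is a ring homomorphism and the filtrations are multiplicative by Lemma~\ref{filtr_prod}; for $\cs_{F_a}$ it is the classical characteristic map). The algebra $\bigoplus_m I_F^m/I_F^{m+1}$ is generated in degree $1$ by the classes of the $x_\alpha$, $\alpha\in T^*$, and $\psi$ sends the class of $x_\lambda$ (for a character $\lambda$, viewed in $I_F^1/I_F^2$) to $\lambda\in S^1_\Lambda(T^*)=T^*$. So it suffices to chase a single generator $[x_\lambda]\in I_F^1/I_F^2$ around both ways.

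Going down-then-right: $\psi([x_\lambda])=\lambda$, and then $\cs_{F_a}(\lambda)=c_1^{\CH}(\mathcal L(\lambda))\in\CH^1(G/B,\Lambda)$, the ordinary first Chern class of the associated line bundle. Going right-then-down: $Gr\,\cs_F$ sends $[x_\lambda]$ to the class of $\cs_F(x_\lambda)=c_1^{\hh}(\mathcal L(\lambda))\in\hh^1(G/B)$ in $\hh^{(1/2)}(G/B)$, and then we must apply $\Psi$. Here I would invoke the already-proved description of $\Psi$: combining Proposition~\ref{Psi_existence} with the lemma identifying $\Psi(\zeta_i^{\hh}+\hh^{(\alpha_i+1)})=\zeta_i^{\tCH}$, and the fact that $\Psi$ is compatible with the specialization $\nu$ (Proposition~\ref{structure}(3) realizes $\hh^{(1/2)}$ as $\Omega^{(1/2)}\otimes\Lambda$, and $\Psi$ is induced from the $\Omega$-case by reduction $\mathbb{L}\to\mathbb{Z}$). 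Concretely, $\Psi$ kills nothing in degree $1$ and, modulo $\hh^{(2)}$, a degree-one class is detected by its leading term, which for $c_1^{\hh}(\mathcal L(\lambda))$ agrees with $c_1^{\CH}(\mathcal L(\lambda))$ because the formal group law only contributes higher-degree corrections: writing $\mathcal L(\lambda)$ in terms of the line bundles attached to fundamental weights and using $c_1^{\hh}(L_1\otimes L_2)=c_1^{\hh}(L_1)+_F c_1^{\hh}(L_2)=c_1^{\hh}(L_1)+c_1^{\hh}(L_2)+(\text{terms in }\hh^{(2)})$, the image in $\hh^{(1/2)}\cong\CH^1$ is additive in $\lambda$ and sends a fundamental weight to the corresponding Schubert divisor class, hence equals $c_1^{\CH}(\mathcal L(\lambda))$.

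So the remaining point is to check that these two degree-one recipes — the leading term of $c_1^{\hh}(\mathcal L(\lambda))$ under $\Psi$, and $c_1^{\CH}(\mathcal L(\lambda))$ — literally coincide as $\Lambda$-linear functions of $\lambda$. Both are $\Lambda$-linear in $\lambda$ (the first because $x_{\lambda+\mu}\equiv x_\lambda+x_\mu\bmod I_F^2$ in the formal group algebra, the second by bilinearity of $c_1^{\CH}$), so it is enough to test on a $\mathbb{Z}$-basis of $T^*$, e.g. fundamental weights $\omega_i$. For each $\omega_i$, both sides equal the class of the desingularization of the corresponding codimension-one Schubert variety: on the $\CH$ side this is the standard formula for $\cs_{F_a}(\omega_i)$, and on the $\hh$ side this follows from the compatibility of $\cs_F$ with the $\zeta_w$-basis recorded in~\cite[\S13]{CPZ} together with the previous lemma computing $\Psi$ on the $\zeta$-basis.

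The main obstacle I anticipate is bookkeeping rather than conceptual: one must be careful that $\Psi$ as defined uses the $\tau$-basis (dual to $\zeta$), while the characteristic map is most naturally understood via the $\zeta$-basis, so the degree-one comparison should be phrased using the lemma $\Psi(\zeta_i^{\hh}+\hh^{(\alpha_i+1)})=\zeta_i^{\tCH}$ applied to the Schubert divisors, and one must confirm that in degree $1$ the formal-group corrections genuinely land in $\hh^{(2)}$ (which is immediate from Lemma~\ref{filtr_prod}, since any monomial of formal degree $\ge 2$ in Chern classes lies in $\hh^{(1)}\cdot\hh^{(1)}\subseteq\hh^{(2)}$). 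Once that is pinned down, commutativity on generators follows, and since all maps are graded $\Lambda$-algebra homomorphisms, the diagram commutes.
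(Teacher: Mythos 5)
Your proposal is correct in outline and shares the paper's key reduction: since all maps are graded ring homomorphisms and $\bigoplus_m I_F^m/I_F^{m+1}$ is generated in degree one by the classes of the $x_\lambda$, it suffices to check that $\Psi$ sends the class of $c_1^{\hh}(\mathcal L(\lambda))$ in $\hh^{(1/2)}(G/B)$ to $c_1^{\tCH}(\mathcal L(\lambda))$. Where you diverge is in how this degree-one identity is verified. The paper does it in two lines, purely by bookkeeping in the Lazard ring: expand $c_1^{\Omega}(\mathcal L(\lambda))=\sum_w r_w\tau_w^{\Omega}$, observe that for codimension-one basis elements $r_w\in\mathbb L^0=\mathbb Z$, and note that both $c_1^{\hh}$ and $c_1^{\tCH}$ are obtained by specializing these same integer coefficients, so $\Psi$ of the leading term is literally $c_1^{\tCH}(\mathcal L(\lambda))$; no Chevalley formula or Schubert-divisor geometry is needed. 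You instead reduce by linearity to fundamental weights and identify both sides with resolved Schubert divisor classes, invoking the compatibility of $\cs_F$ with the $\zeta_w$-basis from \cite[\S 13]{CPZ} together with the lemma $\Psi(\zeta_i^{\hh}+\hh^{(\alpha_i+1)})=\zeta_i^{\tCH}$. This works, but it is heavier, and two small points need care: (i) the fundamental weights form a basis of $T^*$ only when $G$ is simply connected, so for general split semisimple $G$ you should test on an arbitrary $\mathbb Z$-basis of $T^*$ and compare Chevalley coefficients $\langle\lambda,\alpha^\vee\rangle$ on both sides rather than single Schubert divisors; (ii) the statement that the $\hh$-side leading term of $c_1^{\hh}(\mathcal L(\lambda))$ is the expected integral combination of codimension-one $\zeta$'s is, when unwound, exactly the $\mathbb L$-degree argument above (the codimension-zero coefficient vanishes because $\mathbb L^1=0$, and the codimension-one coefficients are integers matching the Chow ones), so the Schubert-divisor detour can be bypassed entirely. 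With those points pinned down your argument is complete, and indeed your ``concretely'' paragraph already contains the paper's direct argument in loose form.
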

\begin{proof}
By definition, it is sufficient to prove that $\Psi(pr_1(c^{\hh}_1(\mathcal{L}_{\lambda})))=c^{\tCH}_1(\mathcal{L}_{\lambda})$
Consider the expansion $c_1^{\Omega}(\mathcal{L}_{\lambda})=\sum
r_w\tau^{\Omega}_w$ we have that $r_w\in\mathbb{L}^0=\mathbb{Z}$ for
$|w|=1$. Then $c_1^{\hh}(\mathcal{L}_{\lambda})=\sum (r_w\otimes
1_{\Lambda})\tau_w^{\hh}$ and \[
\Psi(pr_1(c_1^{\hh}(\mathcal{L}_{\lambda})))=\sum_{|w|=1}
r_w\tau^{\tCH}_w=c_1^{\Omega}(\mathcal{L}_{\lambda})\otimes
1_{\mathbb{Z}}=c_1^{\tCH}(\mathcal{L}_{\lambda}). \qedhere\]
\end{proof}

\begin{lem}\label{addker}
For the additive group law the induced filtration satisfies
\[
\cs(I_{F_a})\CH(X,\Lambda)\cap \CH^i(X,\Lambda)=\sum\cs(x_{\alpha})\CH^{i-1}(X,\Lambda).
\]
\end{lem}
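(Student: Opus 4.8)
The plan is to exploit the fact that $\CH(X,\Lambda)$ is, by the characteristic map for the additive formal group law, a quotient of the polynomial ring $S^*_\Lambda(T^*)$ together with the standard commutative-algebra description of powers of the augmentation-type ideal $I_{F_a}$. First I would reduce to $\Lambda=\mathbb{Z}$, or even work directly over $\Lambda$: since $\CH(X,\Lambda)=\CH(X)\otimes\Lambda$ and everything in sight is $\Lambda$-linear, it suffices to prove the identity after tensoring a $\mathbb{Z}$-statement with $\Lambda$, using that the relevant Chow groups are free. The inclusion $\supseteq$ is immediate: any element of the right-hand side is a sum of products $\cs(x_\alpha)\cdot y$ with $y\in\CH^{i-1}$, hence lies in $\cs(I_{F_a})\CH(X,\Lambda)$ and in $\CH^i$.

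For the nontrivial inclusion $\subseteq$, take $z\in\cs(I_{F_a})\CH(X,\Lambda)$ that is homogeneous of codimension $i$. Write $z=\sum_k \cs(f_k)\, y_k$ with $f_k\in I_{F_a}$ and $y_k\in\CH(X,\Lambda)$; since $\cs_{F_a}$ is a ring map and $\CH(X,\Lambda)$ is graded, I may assume each term $\cs(f_k)y_k$ is homogeneous of codimension $i$, and moreover that each $f_k\in I_{F_a}$ is homogeneous as an element of $S^*_\Lambda(T^*)$ — here the key point is that under the additive formal group law $F_a$ the formal group algebra $\Lambda[[T^*]]_{F_a}$ is just the completion of $S^*_\Lambda(T^*)$, and $I_{F_a}$ is the ideal generated by the linear forms $x_\alpha$, so $I_{F_a}$ is the irrelevant ideal and is generated in degree $1$ by the $x_\alpha$ with $\alpha$ running over a set of generators of $T^*$ (equivalently over all roots, since these span $T^*\otimes\mathbb{Q}$ up to the torsion that is killed). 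Thus each homogeneous $f_k$ of positive degree can be written $f_k=\sum_\alpha x_\alpha g_{k,\alpha}$ with $g_{k,\alpha}$ homogeneous of degree $\deg f_k-1$, whence $\cs(f_k)y_k=\sum_\alpha \cs(x_\alpha)\,\cs(g_{k,\alpha})y_k$, and each $\cs(g_{k,\alpha})y_k$ has codimension $i-1$. Summing over $k$ and $\alpha$ exhibits $z$ in $\sum_\alpha \cs(x_\alpha)\CH^{i-1}(X,\Lambda)$.

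The one subtlety I would be careful about is the passage from ``$I_{F_a}$ is generated by $\{x_\lambda\}_{\lambda\in T^*}$'' to ``$I_{F_a}$ is generated, in degree one, by the $x_\alpha$ for $\alpha$ a root'', i.e. whether the root lattice generates enough. In fact the cleanest route avoids this: the statement of the lemma already writes the right-hand side as $\sum \cs(x_\alpha)\CH^{i-1}$, and one should read the sum as over a generating set of $T^*$ (or over all $\lambda$), using the isomorphism $\psi\colon\bigoplus I_{F_a}^m/I_{F_a}^{m+1}\to S^*_\Lambda(T^*)$ of \cite[Lem.~4.2]{CPZ}, which for the additive law is just the identification of the associated graded of the completion with the polynomial ring itself, so that $I_{F_a}^m/I_{F_a}^{m+1}\cong S^m_\Lambda(T^*)$ and the degree-one part is exactly spanned by the $x_\lambda$. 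The main obstacle is therefore bookkeeping rather than ideas: one must keep the homogeneity grading on $S^*_\Lambda(T^*)$ and the codimension grading on $\CH(X,\Lambda)$ aligned through $\cs_{F_a}$ and check that $\cs_{F_a}$ is strictly graded (degree $m$ polynomials map to $\CH^m$), which is built into the construction of the characteristic map. Once that alignment is in place, the commutative-algebra factorization $f=\sum x_\alpha g_\alpha$ does all the work.
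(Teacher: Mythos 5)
Your argument is correct and is essentially the paper's own proof, just written out in more detail: both reduce to the fact that for the additive law $\cs(I_{F_a})\CH(X,\Lambda)$ is generated by the degree-one classes $\cs(x_\alpha)\in\CH^1(X,\Lambda)$ with $\alpha$ running over (a generating set of) $T^*$, and then use homogeneous decomposition in the graded ring $\CH(X,\Lambda)$ to see that the codimension-$i$ part of such an element lies in $\sum\cs(x_\alpha)\CH^{i-1}(X,\Lambda)$. The bookkeeping you flag (gradedness of $\cs_{F_a}$ and reading the sum over characters rather than roots) is exactly what the paper implicitly uses, so no new ideas are needed.
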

\begin{proof}
Note that $\cs(I_a)\CH(X,\Lambda)$ is generated by the elements
$\cs(x_{\alpha})\in\CH^1(X,R)$. For any element
$z=\sum\cs(x_{\alpha})y_{\alpha}$ of $\cs(I_a)\CH(X,\Lambda)$ we have
that $z$ lies in the $\CH^{i}(X,\Lambda)$ if and only if $y_{\alpha}\in \CH^{i-1}(X,\Lambda)$. Therefore $\cs(I_a)\CH(X,\Lambda)\cap \CH^i(X,\Lambda)=\sum\cs(x_{\alpha})\CH^{i-1}(X,\Lambda)$.
\end{proof}

Let $\hh^{(i)}(G)$ denote the image 
$\pi_{G/B}^{\hh}(\hh^{(i)}(G/B))$ and let
$\hh^{(i/i+1)}(G)$ denote the quotient $\hh^{(i)}(G)/\hh^{(i+1)}(G)$.

\begin{prop}\label{GroupComparison}
For every $i$ there is an exact sequence:
\[
0\rightarrow\tfrac{\cs(I)\hh^{(i-1)}(X)}{\hh^{(i+1)}(X)}\rightarrow \tfrac{(\cs(I)\hh(X))\cap\hh^{(i)}(X)}{\hh^{(i+1)}(X)}\rightarrow \CH^i(G,\Lambda)\rightarrow \hh^{(i/i+1)}(G)\rightarrow 0.
\]
\end{prop}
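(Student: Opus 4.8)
The plan is to build the desired four-term exact sequence by comparing the short exact sequence \eqref{eq:group_sequence} with its Chow analogue through the graded-ring isomorphism $\Psi$ of Proposition~\ref{Psi_existence}. Recall that $\hh^{(i)}(G)=\pi_{G/B}^{\hh}(\hh^{(i)}(X))$ by definition, so the surjection $\pi_{G/B}^{\hh}$ restricted to $\hh^{(i)}(X)$ has image $\hh^{(i)}(G)$ and kernel $\cs(I_F)\cap\hh^{(i)}(X)$ by \eqref{eq:group_sequence}. First I would take the filtered pieces: the snake lemma (or a direct diagram chase) applied to $\pi_{G/B}^{\hh}$ on the pair $\hh^{(i)}(X)\supseteq\hh^{(i+1)}(X)$ gives a short exact sequence
\[
0\to\frac{\cs(I_F)\cap\hh^{(i)}(X)}{\cs(I_F)\cap\hh^{(i+1)}(X)}\to\hh^{(i/i+1)}(X)\to\hh^{(i/i+1)}(G)\to 0.
\]
Next I would identify $\hh^{(i/i+1)}(X)$ with $\CH^i(G/B,\Lambda)$ via $\Psi$, so that the cokernel term $\hh^{(i/i+1)}(G)$ sits at the right end of the claimed sequence, and the leftmost kernel term gets identified (using the Lemma preceding the Proposition on $\cs_F$ commuting with $\Psi$, and Lemma~\ref{addker}) with a subquotient of $\cs(I_{F_a})\CH(X,\Lambda)$ inside $\CH^i(G/B,\Lambda)$.

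The remaining task is to re-express that kernel term $\bigl(\cs(I_F)\cap\hh^{(i)}(X)\bigr)\big/\bigl(\cs(I_F)\cap\hh^{(i+1)}(X)\bigr)$ as the two-step quotient $\bigl((\cs(I)\hh(X))\cap\hh^{(i)}(X)\bigr)\big/\hh^{(i+1)}(X)$ modulo the image of $\cs(I)\hh^{(i-1)}(X)\big/\hh^{(i+1)}(X)$. The point is that $\cs(I_F)=\cs(I_F)\hh(X)$ is the ideal of $\hh(X)$ generated by the degree-one classes $\cs(x_\alpha)=c_1^{\hh}(\mathcal L(\alpha))\in\hh^{(1)}(X)$; so an element of $\cs(I_F)\cap\hh^{(i)}(X)$ is a sum $\sum\cs(x_\alpha)y_\alpha$ lying in filtration level $i$, and the subgroup where the $y_\alpha$ themselves can be chosen in level $i-1$ is precisely $\cs(I)\hh^{(i-1)}(X)$. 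Passing to the associated graded and using that $Gr\,\cs_F$ corresponds under $\Psi,\psi$ to $\cs_{F_a}$ (the Lemma above) together with Lemma~\ref{addker}, which says exactly that in the \emph{additive} (graded) world $\cs(I_{F_a})\CH(X,\Lambda)\cap\CH^i=\sum\cs(x_\alpha)\CH^{i-1}$, lets me conclude that the failure of $\sum\cs(x_\alpha)y_\alpha\in\hh^{(i)}$ to come from $y_\alpha\in\hh^{(i-1)}$ is captured, modulo $\hh^{(i+1)}$, by the cokernel of $\cs(I)\hh^{(i-1)}(X)/\hh^{(i+1)}(X)\to(\cs(I)\hh(X)\cap\hh^{(i)}(X))/\hh^{(i+1)}(X)$. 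Splicing these identifications together yields the asserted four-term exact sequence.

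The main obstacle I expect is the bookkeeping in the middle: one must be careful that "an element of $\cs(I)\hh(X)$ that happens to lie in $\hh^{(i)}(X)$" is not automatically a $\Lambda$-combination of $\cs(x_\alpha)$ with coefficients in $\hh^{(i-1)}(X)$ — that is the whole reason the sequence is not just a two-term statement — and that this discrepancy is \emph{exactly} $\ker(\cs_{F_a})\cap\CH^i$ after passing to $Gr$. Here the commuting-square Lemma for $\cs_F$ is essential: it guarantees that the associated-graded of the filtration-$i$ part of $\cs(I_F)\hh(X)$ maps onto $\cs(I_{F_a})\CH(X,\Lambda)\cap\CH^i(X,\Lambda)$, and Lemma~\ref{addker} computes the latter. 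A secondary technical point is checking that the various quotients are well defined, i.e.\ that $\cs(I)\hh^{(i-1)}(X)\subseteq\hh^{(i)}(X)$ (immediate from Lemma~\ref{filtr_prod} since $\cs(x_\alpha)\in\hh^{(1)}(X)$) and that $\hh^{(i+1)}(X)\subseteq(\cs(I)\hh(X))\cap\hh^{(i)}(X)$ need not hold, so the middle term is genuinely a quotient by $\hh^{(i+1)}(X)\cap\cs(I)\hh(X)$ — a point to state cleanly rather than a real difficulty. Once these identifications are pinned down, exactness at each of the four spots follows formally from the snake lemma applied to the map of short exact sequences between the $\hh$-version and the Chow version of \eqref{eq:group_sequence}.
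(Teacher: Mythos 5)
Your proposal is correct and follows essentially the same route as the paper: pass to the graded pieces of the sequence \eqref{eq:group_sequence} (the paper invokes Bourbaki where you invoke a snake-lemma chase), transport to the Chow-level sequence via $\Psi$ using the commuting-square lemma and Lemma~\ref{addker}, identify $(\Psi^i)^{-1}\bigl(\sum\cs(x_\alpha)\CH^{i-1}(X,\Lambda)\bigr)$ with the image of $\cs(I)\hh^{(i-1)}(X)$ in $\hh^{(i/i+1)}(X)$, and conclude by a diagram chase on the resulting map of short exact sequences. The only slip is cosmetic: the Chow-level kernel you need is $\cs(I_{F_a})\CH(X,\Lambda)\cap\CH^i(X,\Lambda)$, the kernel of $\CH^i(X,\Lambda)\to\CH^i(G,\Lambda)$ computed in Lemma~\ref{addker}, rather than ``$\ker(\cs_{F_a})\cap\CH^i$''.
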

\begin{proof}
By~\cite[Prop.~2, \S2.4]{Bourbaki} we obtain from \eqref{eq:group_sequence} the short exact sequence:
\[
0\rightarrow \tfrac{(\cs(I)\hh(X))\cap\hh^{(i)}(X)}{\hh^{(i+1)}(X)}\rightarrow \tfrac{\hh^{(i)}(X)}{\hh^{(i+1)}(X)}\rightarrow \hh^{(i/i+1)}(G)\rightarrow 0.
\]

By Lemma~\ref{addker} applied to the case of additive formal group
law, the above sequence turns into
\[
0\rightarrow \sum\cs(x_{\alpha})\CH^{i-1}(X,R)\rightarrow \CH^i(X,R)\rightarrow \CH^i(G,R)\rightarrow 0.
\]

Observe that for isomorphism $(\Psi^i)^{-1}$
we have \[(\Psi^i)^{-1}\left(\sum\cs(x_{\alpha})\CH^{i-1}(X,\Lambda)\right)=\left(\tfrac{\cs(I)\hh^{(i-1)}(X)}{\hh^{(i+1)(X)}}\right)\subseteq \left(\tfrac{\cs(I)\hh^(X)\cap\hh^{(i)}(X)}{\hh^{(i+1)(X)}}\right)\eqno{(*)}\]

Then we get the following diagram with exact rows:
\[
\xymatrix{
0\ar[r] & \sum\cs(x_{\alpha})\CH^{i-1}(X,\Lambda)\ar[r]\ar@{^{(}_->}[d] & \CH^i(X,\Lambda)\ar[r]\ar@{=}[d]^{(\Psi^i)^{-1}} & \CH^i(G,\Lambda)\ar[r] & 0\\
0\ar[r] & \frac{\cs(I)\hh(X)\cap\hh^{(i)}(X)}{\hh^{(i+1)(X)}}\ar[r] & \hh^{(i/i+1)}(X)\ar[r] & \hh^{(i/i+1)}(G)\ar[r] & 0
}
\]
The latter sequence and ($*$) gives rise to the exact sequence
\[
0\rightarrow\tfrac{\cs(I)\hh^{(i-1)}(X)}{\hh^{(i+1)}(X)}\rightarrow \tfrac{(\cs(I)\hh(X))\cap\hh^{(i)}(X)}{\hh^{(i+1)}(X)}\rightarrow \CH^i(G,\Lambda)\rightarrow \hh^{(i/i+1)}(G)\rightarrow 0.\qedhere
\]
\end{proof}
\begin{cor}
Assume that pullbacks $p_{G/B}^{\tCH}(X_{w_i})$ of Schubert cells generate $\CH(G)$ as $\mathbb{Z}$-algebra for some elements $w_1,\ldots w_m$ in $W$. Then the pullbacks of Schubert cells $p_{G/B}^{\hh}(\zeta_{w_i})$ generate $\hh(G)$ as $\Lambda$-algebra.
\end{cor}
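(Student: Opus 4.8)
The plan is to read off the generation statement directly from the four-term exact sequence of Proposition~\ref{GroupComparison}, together with the explicit compatibility between the filtration on $\hh(G/B)$ and the Chow ring coming from $\Psi$. First I would recall that $\Psi^i\colon\hh^{(i/i+1)}(X)\to\CH^i(X,\Lambda)$ sends the class $\zeta_w^{\hh}$ (for $l(w)=N-i$, equivalently $\tau_w^{\hh}$ for $\alpha_w=i$) to the corresponding Chow class $\zeta_w^{\tCH}$, by the Lemma following Proposition~\ref{Psi_existence}; and that $\Psi$ is a ring isomorphism. The last arrow $\CH^i(G,\Lambda)\twoheadrightarrow\hh^{(i/i+1)}(G)$ of the Proposition is, by construction, induced by $\pi_{G/B}^{\hh}$ after identifying $\CH^i(X,\Lambda)$ with $\hh^{(i/i+1)}(X)$ via $(\Psi^i)^{-1}$; in particular it sends $p_{G/B}^{\tCH}(X_w)$ to the class of $p_{G/B}^{\hh}(\zeta_w)$ in $\hh^{(i/i+1)}(G)$.

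Next I would run the standard filtered-to-associated-graded lifting argument. Set $R=p_{G/B}^{\hh}(\zeta_{w_1}),\ldots,p_{G/B}^{\hh}(\zeta_{w_m})$ and let $A\subseteq\hh(G)$ be the $\Lambda$-subalgebra they generate. The hypothesis says that the images of the $p_{G/B}^{\tCH}(X_{w_i})$ generate $\CH(G)=\bigoplus_i\CH^i(G,\mathbb{Z})$ as a $\mathbb{Z}$-algebra, hence (base-changing coefficients, or simply noting that the surjection $\CH^i(G,\Lambda)\to\hh^{(i/i+1)}(G)$ is $\Lambda$-linear) the images of $p_{G/B}^{\hh}(\zeta_{w_i})$ in $Gr^*\hh(G)=\bigoplus_i\hh^{(i/i+1)}(G)$ generate it as a $\Lambda$-algebra. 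Because the surjection $\CH^i(G,\Lambda)\to\hh^{(i/i+1)}(G)$ is grading-preserving, a homogeneous element of $\hh^{(i/i+1)}(G)$ is hit by a polynomial in the $p_{G/B}^{\tCH}(X_{w_i})$ of weighted degree $i$, so the image of $A$ in $\hh^{(i/i+1)}(G)$ is all of $\hh^{(i/i+1)}(G)$ for every $i$.

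Then I would finish by descending induction on the filtration, using that $\hh^{(i)}(G)=0$ for $i>N$ since $\hh^{(i)}(X)=0$ there. Suppose inductively that $\hh^{(i+1)}(G)\subseteq A$ (the base case $i=N$ being $\hh^{(N+1)}(G)=0\subseteq A$). Given $x\in\hh^{(i)}(G)$, its class in $\hh^{(i/i+1)}(G)$ lies in the image of $A$ by the previous paragraph, so there is $a\in A$ with $x-a\in\hh^{(i+1)}(G)\subseteq A$; hence $x\in A$. By induction $\hh^{(0)}(G)=\hh(G)\subseteq A$, i.e. the $p_{G/B}^{\hh}(\zeta_{w_i})$ generate $\hh(G)$ as a $\Lambda$-algebra.

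The only real point that needs care — and which I would state explicitly rather than grind through — is the identification of the last map in Proposition~\ref{GroupComparison} with the map induced by $\pi_{G/B}^{\hh}$ sending Schubert-cell pullbacks to Schubert-cell pullbacks; this is exactly what the commuting diagram in the proof of that Proposition gives, once one remembers that the right-hand column there is $\hh^{(i/i+1)}(X)\twoheadrightarrow\hh^{(i/i+1)}(G)$ and that $\pi_{G/B}^{\hh}(\zeta_w)=p_{G/B}^{\hh}(\zeta_w)$ by definition of $\zeta_w$ as $(p_{I_w})_{\hh}(1)$ and functoriality of pushforward. Everything else is the routine filtered-algebra argument above, so I do not expect a serious obstacle.
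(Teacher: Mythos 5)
Your argument is correct and is essentially the paper's own proof, only written out in more detail: the paper likewise deduces from Proposition~\ref{GroupComparison} that the classes of $p_{G/B}^{\hh}(\zeta_{w_i})$ generate the associated graded ring $\bigoplus_i\hh^{(i/i+1)}(G)$ and then concludes by the standard lifting argument using finiteness of the filtration (the descending induction you spell out). The compatibility $\Psi(\zeta_w^{\hh}+\hh^{(\alpha_w+1)})=\zeta_w^{\tCH}$ that you invoke is exactly the lemma following Proposition~\ref{Psi_existence}, so no new ingredient is needed.
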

\begin{proof}
By~\ref{GroupComparison} classes of $p_{G/B}^{\hh}(\zeta_{w_i})$ generate the associated graded ring $\bigoplus_i\hh^{(i/i+1)}(G)$. Then $p_{G/B}^{\hh}(\zeta_{w_i})$ generate $\hh(G)$, since the filtration is finite.
\end{proof}

The following observations will be useful for computations
\begin{lem}\label{CH0}
If $\CH^i(G)=0$ then $\frac{\cs(I)\hh^{(i-1)}(G/B)}{\hh^{(i+1)}(G/B)}=\frac{\hh^{(i)}(G/B)}{\hh^{(i+1)}(G/B)}$
\end{lem}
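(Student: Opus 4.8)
The plan is to read this off directly from the four-term exact sequence of Proposition~\ref{GroupComparison}. That sequence reads
\[
0\rightarrow\tfrac{\cs(I)\hh^{(i-1)}(X)}{\hh^{(i+1)}(X)}\rightarrow \tfrac{(\cs(I)\hh(X))\cap\hh^{(i)}(X)}{\hh^{(i+1)}(X)}\rightarrow \CH^i(G,\Lambda)\rightarrow \hh^{(i/i+1)}(G)\rightarrow 0,
\]
where $X=G/B$. The hypothesis $\CH^i(G)=0$ (which I would interpret as $\CH^i(G,\Lambda)=0$, the relevant coefficients in this section) kills the third term. Hence the first map becomes an isomorphism
\[
\tfrac{\cs(I)\hh^{(i-1)}(X)}{\hh^{(i+1)}(X)}\xrightarrow{\ \sim\ }\tfrac{(\cs(I)\hh(X))\cap\hh^{(i)}(X)}{\hh^{(i+1)}(X)}.
\]

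Next I would invoke the short exact sequence appearing in the proof of Proposition~\ref{GroupComparison} (obtained from \eqref{eq:group_sequence} via \cite[Prop.~2, \S2.4]{Bourbaki}):
\[
0\rightarrow \tfrac{(\cs(I)\hh(X))\cap\hh^{(i)}(X)}{\hh^{(i+1)}(X)}\rightarrow \tfrac{\hh^{(i)}(X)}{\hh^{(i+1)}(X)}\rightarrow \hh^{(i/i+1)}(G)\rightarrow 0.
\]
Since $\CH^i(G,\Lambda)=0$ and the displayed commutative diagram in Proposition~\ref{GroupComparison} identifies $\hh^{(i/i+1)}(G)$ with $\CH^i(G,\Lambda)$ (as the cokernel in both rows, matched by the isomorphism $(\Psi^i)^{-1}$), we get $\hh^{(i/i+1)}(G)=0$. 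Therefore the middle map of the last short exact sequence collapses to an isomorphism
\[
\tfrac{(\cs(I)\hh(X))\cap\hh^{(i)}(X)}{\hh^{(i+1)}(X)}\xrightarrow{\ \sim\ }\tfrac{\hh^{(i)}(X)}{\hh^{(i+1)}(X)}.
\]

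Composing the two isomorphisms yields $\dfrac{\cs(I)\hh^{(i-1)}(X)}{\hh^{(i+1)}(X)}=\dfrac{\hh^{(i)}(X)}{\hh^{(i+1)}(X)}$ as submodules of $\hh^{(i/i+1)}(X)$, which is exactly the assertion (with $X=G/B$). There is essentially no obstacle here: the only point requiring a word of care is the bookkeeping of which coefficient ring "$\CH^i(G)=0$" refers to and checking that the inclusion $\cs(I)\hh^{(i-1)}(X)\subseteq (\cs(I)\hh(X))\cap\hh^{(i)}(X)$ used implicitly is the one already built into \eqref{eq:group_sequence} of Proposition~\ref{GroupComparison}; granting that, the statement is a formal consequence of exactness.
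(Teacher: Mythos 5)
Your argument is correct and is exactly the intended one: the paper states this lemma without proof as an immediate consequence of Proposition~\ref{GroupComparison}, and your reading of the four-term exact sequence (vanishing of $\CH^i(G,\Lambda)$ forces the first map to be an isomorphism and $\hh^{(i/i+1)}(G)=0$, whence the inner short exact sequence collapses) is precisely that consequence. Your side remarks on coefficients ($\CH^i(G)=0$ implies $\CH^i(G,\Lambda)=0$) and on the inclusion $\cs(I)\hh^{(i-1)}(X)\subseteq(\cs(I)\hh(X))\cap\hh^{(i)}(X)$, which follows from $\cs(I)\subseteq\hh^{(1)}(X)$ and Lemma~\ref{filtr_prod}, are the right points of care and are already built into the proposition.
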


\begin{lem}
Assume that $\CH^1(G)=0$. Then for $i\geqslant 2$ 
\[\cs(I)\hh(G/B)\cap\hh^{(i)}(G/B)=\cs(I)\hh^{(1)}(G/B)\cap\hh^{(i)}(G/B)\]
\end{lem}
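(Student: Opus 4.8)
The plan is to reduce the claimed identity to the analogous statement for the Chow ring via the filtration isomorphism $\Psi$, exactly as in the proof of Proposition~\ref{GroupComparison}. First I would observe that $\cs(I)\hh(G/B)$ is generated as an $\hh(G/B)$-module by the classes $\cs(x_{\alpha})\in\hh^{(1)}(G/B)$, so any element $z\in\cs(I)\hh(G/B)$ can be written $z=\sum_{\alpha}\cs(x_{\alpha})\,y_{\alpha}$ with $y_\alpha\in\hh(G/B)$; we may assume each $y_\alpha$ is homogeneous with respect to the filtration (i.e. lies in some $\hh^{(j)}(G/B)$ but not in the next). The inclusion $\cs(I)\hh^{(1)}(G/B)\cap\hh^{(i)}(G/B)\subseteq\cs(I)\hh(G/B)\cap\hh^{(i)}(G/B)$ is obvious; the content is the reverse inclusion, namely that if $z\in\hh^{(i)}(G/B)$ with $i\geq 2$ then $z$ can be rewritten as a sum $\sum\cs(x_\alpha)y_\alpha'$ with all $y_\alpha'\in\hh^{(1)}(G/B)$.

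The key step is to analyze the ``defect'': a summand $\cs(x_\alpha)y_\alpha$ with $y_\alpha\notin\hh^{(1)}(G/B)$ means $y_\alpha$ has a nonzero component in $\hh^{(0/1)}(G/B)$. Under $\Psi$ this component maps into $\CH^0(G/B,\Lambda)=\Lambda$, and by Lemma~\ref{addker} (applied to $F_a$) together with the commuting square from the preceding lemma relating $Gr\,\cs_F$ to $\cs_{F_a}$, the sum of such top-filtration contributions lands in $\sum\cs(x_\alpha)\CH^0(G/B,\Lambda)$, which is the image in $\CH^1$ of the degree-one part of $\cs(I)$. The hypothesis $\CH^1(G)=0$ means precisely that the pullback $\CH^1(G/B,\Lambda)\to\CH^1(G,\Lambda)$ is zero, i.e. $\cs(I_{F_a})\cap\CH^1(G/B,\Lambda)=\sum\cs(x_\alpha)\CH^0(G/B,\Lambda)=\CH^1(G/B,\Lambda)$ — so in Chow theory \emph{every} degree-one class is $\sum\cs(x_\alpha)\lambda_\alpha$ with $\lambda_\alpha\in\Lambda$, and in particular there is no obstruction to absorbing the degree-$1$ piece. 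Pulling this back through $\Psi$ and $Gr\,\cs_F$, I can modify $z$ by an element of $\cs(I)\hh^{(1)}(G/B)$ so as to strictly increase the minimal filtration level of the ``bad'' summands, and since the filtration on $\hh(G/B)$ is finite this descending induction terminates, giving $z\in\cs(I)\hh^{(1)}(G/B)\cap\hh^{(i)}(G/B)$.

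More concretely, I would run the induction on the filtration level $j$ of the smallest offending $y_\alpha$. At the $j=0$ stage one uses $\CH^1(G)=0$ as above; at later stages $j\geq 1$ there is nothing to fix since then $y_\alpha\in\hh^{(1)}(G/B)$ already for that summand, so in fact a single application of the $j=0$ argument to the whole expression, followed by noting the leftover lies in $\cs(I)\hh^{(1)}(G/B)$, suffices. The mechanism for the $j=0$ step is: decompose $z=\sum_\alpha\cs(x_\alpha)(a_\alpha + y_\alpha')$ with $a_\alpha\in\Lambda$ (a lift of the $\hh^{(0/1)}$-component, realized as an actual element of $\hh^{(0)}=\hh(G/B)$, e.g. via the unit) and $y_\alpha'\in\hh^{(1)}(G/B)$; then $\sum_\alpha a_\alpha\cs(x_\alpha)\in\cs(I)\hh(G/B)\cap\hh^{(1)}(G/B)$ and, applying $\Psi^1$ and the Chow-side statement $\cs(I_{F_a})\cap\CH^1=\sum\cs(x_\alpha)\CH^0$ which holds trivially, one checks this element actually lies in $\hh^{(2)}(G/B)$ modulo terms already in $\cs(I)\hh^{(1)}$ — here the hypothesis $\CH^1(G)=0$ guarantees the residue in $\hh^{(1/2)}(G/B)$ is itself a combination of the $\cs(x_\alpha)$ up to higher filtration, so it can be folded back into $\cs(I)\hh^{(1)}(G/B)$.

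The main obstacle I anticipate is the bookkeeping in passing between $\cs_F$ on $\hh(G/B)$ and $\cs_{F_a}$ on $\CH(G/B,\Lambda)$ through the graded pieces: one must be careful that $Gr\,\cs_F$ is computed with respect to the topological/$\tau$-filtration of Definition~\ref{filtration}, that $\cs(x_\alpha)$ indeed has filtration exactly $1$ (not higher), and that multiplication by $\cs(x_\alpha)$ raises filtration by exactly one on graded pieces — all of which follow from Lemma~\ref{filtr_prod} and the isomorphism $\psi$ of \cite[Lem.~4.2]{CPZ}, but need to be invoked cleanly. A secondary subtlety is that $z$ a priori has mixed homogeneous degree; restricting to $z\in\hh^{(i)}(G/B)$ and working one graded component at a time via $\Psi$ keeps this under control.
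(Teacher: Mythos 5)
Your reduction is set up correctly: write $z=\sum_\alpha \cs(x_\alpha)(a_\alpha+y'_\alpha)$ with $a_\alpha\in\Lambda$ and $y'_\alpha\in\hh^{(1)}(G/B)$, observe $\sum_\alpha\cs(x_\alpha)y'_\alpha\in\cs(I)\hh^{(1)}(G/B)\cap\hh^{(2)}(G/B)$ by Lemma~\ref{filtr_prod}, and so the leftover $w=\sum_\alpha a_\alpha\cs(x_\alpha)$ lies in $\hh^{(2)}(G/B)$ because $z\in\hh^{(i)}(G/B)$ with $i\geq 2$. The gap is in how you dispose of $w$: you argue that since its residue in $\hh^{(1/2)}(G/B)$ is ``a combination of the $\cs(x_\alpha)$'' it ``can be folded back into $\cs(I)\hh^{(1)}(G/B)$''. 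That is circular --- $w$ is by construction a $\Lambda$-combination of the $\cs(x_\alpha)$, i.e.\ an element of $\cs(I)\hh^{(0)}(G/B)$, and being such a combination says nothing about membership in $\cs(I)\hh^{(1)}(G/B)$, which is exactly the point at issue. You are also using the hypothesis in the wrong direction: $\CH^1(G)=0$ says the classes $c_1^{\tCH}(\mathcal{L}_\alpha)$ \emph{span} $\CH^1(G/B)$ (that is the content of the surjection $\CH^1(G/B)\to\CH^1(G)$ with kernel $\sum\cs(x_\alpha)\CH^0$, Lemma~\ref{addker}), whereas the step you need is their \emph{$\Lambda$-linear independence} in $\hh^{(1/2)}(G/B)\cong\CH^1(G/B,\Lambda)$.

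The correct finish: $\CH^1(G/B)$ is free of rank equal to the rank of $T^*$, and by $\CH^1(G)=0$ it is generated by the classes $c_1^{\tCH}(\mathcal{L}_\alpha)$ for $\alpha$ in a basis of $T^*$; a generating set of that cardinality in a free abelian group of the same rank is a $\mathbb{Z}$-basis, hence these classes are $\Lambda$-linearly independent in $\CH^1(G/B,\Lambda)$. Since $w\in\hh^{(2)}(G/B)$, its residue in $\hh^{(1/2)}(G/B)$ vanishes; applying $\Psi$ (Prop.~\ref{Psi_existence}) together with the unnumbered lemma identifying $\Psi(pr_1(\cs(x_\alpha)))$ with $c_1^{\tCH}(\mathcal{L}_\alpha)$, this gives $\sum_\alpha a_\alpha c_1^{\tCH}(\mathcal{L}_\alpha)=0$ in $\CH^1(G/B,\Lambda)$, hence all $a_\alpha=0$ and $z=\sum_\alpha\cs(x_\alpha)y'_\alpha\in\cs(I)\hh^{(1)}(G/B)$. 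In particular no descending induction over filtration levels is needed; a single decomposition suffices. (The paper's own proof consists only of the remark that $\cs(I)\hh(G/B)$ is generated by the $\cs(x_\alpha)$ for $\alpha$ in a basis of the character lattice, so the above is the completion it leaves implicit; as written, your ``folding back'' step does not follow from what you cite and is a genuine gap.)
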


\begin{proof}
Note that ideal $\cs(I)\hh(G/B)$ is generated by $\cs(x_{\alpha})$ where $\alpha$ runs over the basis of the character lattice.
\end{proof}

\section{Examples of computations}\label{sec:comput}
The results of the previous section allow us to obtain some
information concerning the ring $\hh(G)$ from $\CH(G)$. Moreover, in
some cases it allows us to compute $\hh(G)$.

\medskip

We follow the notation of the previous section.
We denote $\bigoplus\hh^{(i/i+1)}(G)$ by $Gr^*\hh(G)$. For
$a\in\hh^{i}(G)$ let $\overline{a}\in\hh^{(i/i+1)}(G)$ denote its
residue class. Let $\alpha$ be
the projection $\CH^*(G,\Lambda)\to\oplus Gr^*\hh(G)$.

\subsubsection*{Algebraic cobordism of $G_2$.}
According to~\cite{Marlin}
we have \[\CH^*(G_2,\mathbb{Z})=\frc{\mathbb{Z}[x_3]}{(x_3^2,2x_3)}\]
where $x_3=\pi^{\tCH}(\zeta_{212})$, $\pi\colon G\to G/B$ is the projection
and $\zeta_{212}$ is the Schubert cell corresponding to the word $w=s_2s_1s_2$.
Let $y_3$ denote the pullback $\pi^{\Omega}(\zeta^{\Omega}_{212})$ of the corresponding Schubert cell in the ring $\Omega(G/B)$ (see Theorem 13.12 of~\cite{CPZ}). Observe that $\alpha^3(x_3)=\overline{y_3}$.
Since $\CH(G_2,\mathbb{L})$ is generated by $1$ and $x_3$, $Gr^*\Omega(G_2)$ is generated by $1$ and $\overline{y_3}$.
Then by\cite[\S 2.8]{Bourbaki} $\Omega(G_2)$ is generated by $1$ and $y_3\in\Omega^{(3)}(G_2)$. 
Since $2x_3=0,$ then $\overline{2y_3}$ so $2y_3\in\Omega^{(4)}(G_2)$
which is zero since $\CH^i(G_2)=0$ for $i\geqslant 4$. Thus $2y_3=0$
and $y_3^2\in\Omega^{(6)}(G_2)=0$.

Let us now compute $\Omega^{(3)}(G_2)$. Proposition~\ref{GroupComparison} gives us the exact sequence
\[
0\rightarrow \tfrac{\cs(I)\Omega^{(2)}(G_2/B)}{\Omega^{(4)}(G_2/B)}\rightarrow \tfrac{\cs(I)\Omega(G_2/B)\cap\Omega^{(3)}(G_2/B)}{\Omega^{(4)}(G_2/B)}\rightarrow \mathbb{L}/2\cdot x_3\rightarrow \Omega^{(3)}(G_2).
\]
Note that since $\cs(I)\Omega(G/B)$ is generated by $\cs(x_1)$ and $\cs(x_2)$. By Lemma~\ref{CH0} we have
\[
\tfrac{\cs(I)\Omega(G_2/B)}{\Omega^{(4)}(G_2/B)}=\tfrac{\langle\zeta_{12121},\zeta_{21212}\rangle\Omega(G_2/B)}{\Omega^{(4)}(G_2/B)}
\]

Then $\cs(I)\cap\Omega^{(3)}(G_2/B)/\Omega^{(4)}(G_2/B)$ equals to the set
\[
\{x=a\zeta_{12121}+b\zeta_{21212}\mid x\in\Omega^{(3)}\}.
\]
It is enough to consider only $a,b$ in $\Omega^{(1)}(G_2/B)\setminus\Omega^{(2)}(G_2/B)$ since for $a,b\in\Omega^{(0)}(G_2/B)\setminus\Omega^{(1)}(G_2/B)$ we have $x\notin\Omega^{(2)}(G_2/B)$.
So we consider
\[
a=r_1\zeta_{12121}+r_2\zeta_{21212} \text{ and } b=s_1\zeta_{12121}+s_2\zeta_{21212} \text{ for } r_1,r_2,s_1,s_2\in\mathbb{L}.
\]
Using the multiplication table for $G_2/B$ from~\cite{CPZ} we obtain that $x$ equals
\[
(r_2+s_1+s_2)\zeta_{1212}+(3r_1+r_2+s_1)\zeta_{2121}+(r_2+s_1+3r_1)a_1\zeta_{121}+(r_2+s_1)a_1\zeta_{212}
\]
modulo $\Omega^{(4)}(G_2/B)$.
Then $x\in\Omega^{(3)}(G_2/B)/\Omega^{(4)}(G_2/B)$ iff $r_2+s_1+s_2=0$ and $3r_1+r_2+s_1=0.$
Therefore, $r_2+s_1=-3r_1$ and $s_2=3r_1$. 
So \[x+\Omega^{(4)}(G_2/B)=-3r_1a_1\zeta_{212}+\Omega^{(4)}(G_2/B).\]
Hence, the kernel of $\mathbb{L}\cdot 2x_3\to\Omega^{(3)}(G_2)$ is generated by $3a_1x_3$.
Then \[\Omega^{(3)}(G_2)=\mathbb{L}/(2,3a_1)\cdot y_3=\mathbb{L}/(2,a_1)\cdot y_3\]
and we obtain that 
\begin{equation}
\Omega(G_2)=\frc{\mathbb{L}[y_3]}{(y_3^2,2y_3,a_1y_3)}.
\end{equation}

Observe that
taking the latter equality modulo 2 we obtain the result established
by Yagita in~\cite{Yagita}.

\subsubsection*{Algebraic cobordism of groups $SO_{n}$, $Spin_{m}$ for
  $n=3,4$ and $m=3,4,5,6$}

According to~\cite{Marlin} 
\[\CH(Spin_i)=\mathbb{Z} \text{ for }
i=3,4,5,6.
\] Then by Proposition~\ref{GroupComparison} we obtain 
\begin{equation}
\Omega(Spin_i)=\mathbb{L}\text{ for }i=3,4,5,6.
\end{equation}

We have $\CH(SO_3)=\mathbb{Z}[x_1]/(2x_1,x_1^2)$ where $x_1=\pi^{\tCH}(\zeta_{w_0s_1})$. Since $CH^i(SO_3)=0$ for $i\geqslant 2$, $\Omega^{(2)}(SO_3)=0$.
For $i=1$ two left terms of exact sequence
of~Proposition~\ref{GroupComparison} coincide, so there is an
isomorphism $\CH^1(SO_3)\to\Omega^{(1)}(SO_3)$. Hence, we obtain 
\begin{equation}
\Omega(SO_3)=\frc{\mathbb{L}[y_1]}{(2y_1,y_1^2)},\;\text{ where }y_1=\pi^{\Omega}(\zeta_{w_0s_1}).
\end{equation}
Since $\CH(SO_4)=\mathbb{Z}[x_1]/(2x_1,x_1^2)$ the same reasoning
proves that \begin{equation}
\Omega(SO_4)=\frc{\mathbb{L}[y_1]}{(2y_1,y_1^2)}.
\end{equation}

\subsubsection*{Oriented cohomology of $PGL_n$}
\begin{lem}
For any oriented cohomology theory $\hh$ with the coefficient ring
$\Lambda$ and the formal group law $F$ we have
\[\hh(PGL_{n})=\frc{\Lambda[x]}{(x^n,nx^{n-1},\ldots{n\choose d}x^{d},\ldots, nx, \ n\cdot_Fx)}.\]
\end{lem}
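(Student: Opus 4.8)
The plan is to present $PGL_n$ as the complement of the determinant hypersurface in a projective space, read the relations off from one vector bundle triviality, and then use the filtration of \S\ref{sec:filtration} to exclude any further relations.

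Write $M_n$ for the affine space of $n\times n$ matrices, so that $PGL_n$ is the open complement of the determinant hypersurface $D=\{\det=0\}$ (integral, as $\det$ is irreducible) in $\mathbb{P}(M_n)=\mathbb{P}^{n^2-1}$. First I would apply the localization property to $i\colon D\hookrightarrow\mathbb{P}^{n^2-1}$, getting a right exact sequence $\hh_*(D)\xrightarrow{i_*}\hh_*(\mathbb{P}^{n^2-1})\to\hh_*(PGL_n)\to 0$. Identifying $\hh_*(\mathbb{P}^{n^2-1})$ with $\hh^*(\mathbb{P}^{n^2-1})=\Lambda[h]/(h^{n^2})$, $h=c_1^{\hh}(\mathcal{O}(1))$, this exhibits $\hh^*(PGL_n)$ as a quotient ring of $\Lambda[h]/(h^{n^2})$, hence generated as a $\Lambda$-algebra by the single class $x:=c_1^{\hh}(\mathcal{O}(1)|_{PGL_n})$: thus $\hh^*(PGL_n)=\Lambda[x]/J$, and it remains to identify $J$, equivalently the image of $i_*$.

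Next I would produce the relations. The inclusion $\mathcal{O}(-1)\hookrightarrow M_n\otimes\mathcal{O}=\mathcal{H}om(\mathcal{O}^{\oplus n},\mathcal{O}^{\oplus n})$ of the tautological subbundle on $\mathbb{P}(M_n)$, twisted by $\mathcal{O}(1)$, is a morphism of vector bundles $\varphi\colon\mathcal{O}^{\oplus n}\to\mathcal{O}(1)^{\oplus n}$ whose value at a point $[A]$ is the matrix $A$; hence $\varphi$ is an isomorphism over $PGL_n=\{\det\neq 0\}$, so $\mathcal{O}(1)^{\oplus n}|_{PGL_n}$ is trivial. Since the Chern classes of a trivial bundle vanish and $c_d^{\hh}(L^{\oplus n})=\binom{n}{d}c_1^{\hh}(L)^d$, we obtain $\binom{n}{d}x^d=0$ for $1\le d\le n$; taking determinants, $\mathcal{O}(n)|_{PGL_n}=\det\bigl(\mathcal{O}(1)^{\oplus n}\bigr)|_{PGL_n}$ is trivial (equivalently, $\det$ is a nowhere vanishing section of $\mathcal{O}(n)$ over $PGL_n$), whence $n\cdot_F x=c_1^{\hh}(\mathcal{O}(n))|_{PGL_n}=0$. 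Therefore $\hh^*(PGL_n)$ is a quotient of $A:=\Lambda[x]/\bigl(x^n,nx^{n-1},\ldots,\binom{n}{d}x^d,\ldots,nx,\ n\cdot_F x\bigr)$; equivalently, the image of $i_*$ contains the ideal of $\hh^*(\mathbb{P}^{n^2-1})$ generated by the $\binom{n}{d}h^d$ and by $n\cdot_F h$.

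The main obstacle is the converse: that there are no further relations, i.e. that $A\to\hh^*(PGL_n)$ is an isomorphism. Specializing the previous two steps to the additive law, $\CH^*(PGL_n,\Lambda)$ is a quotient of $\Lambda[x]/\bigl(\binom{n}{d}x^d\colon 1\le d\le n\bigr)$; to see this is an equality one must check that every codimension-$d$ subvariety of $D$ has degree divisible by $\gcd\bigl(\binom{n}{1},\ldots,\binom{n}{d}\bigr)$, which I would establish by induction on $n$ from the determinantal stratification $D=D_{n-1}\supset\cdots\supset D_1$, $D_r=\{[A]\colon\operatorname{rank}A\le r\}$, whose open strata are $PGL_r$-bundles over $\mathrm{Gr}(r,n)\times\mathrm{Gr}(r,n)$. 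With $\CH^*(PGL_n,\Lambda)=\Lambda[x]/\bigl(\binom{n}{d}x^d\bigr)$ known, Proposition~\ref{GroupComparison} shows $Gr^*\hh(PGL_n)$ is a quotient of $\CH^*(PGL_n,\Lambda)$; I would prove this surjection is an isomorphism by checking that the connecting map of that proposition vanishes, equivalently that $\cs(I)\hh(X)\cap\hh^{(i)}(X)=\cs(I)\hh^{(i-1)}(X)+\hh^{(i+1)}(X)$ for $X=PGL_n/B$ — the statement that the topological filtration on $PGL_n/B$ loses no torsion upon computing $\hh(PGL_n)$. Since $Gr^*A=\CH^*(PGL_n,\Lambda)$ (the leading term of $n\cdot_F x$ being $nx=\binom{n}{1}x$) and the topological filtration is finite, the surjection $A\to\hh^*(PGL_n)$, being an isomorphism on associated graded rings, is then an isomorphism. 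This last step, together with the determinantal degree computation underlying it, is where the bulk of the argument lies.
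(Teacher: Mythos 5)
Your first half is fine and genuinely different from the paper: presenting $PGL_n$ as the complement of the determinant hypersurface, getting generation by $x=c_1^{\hh}(\mathcal{O}(1)|_{PGL_n})$ from localization, and reading off the relations $\binom{n}{d}x^d=0$ and $n\cdot_F x=0$ from the triviality of $\mathcal{O}(1)^{\oplus n}|_{PGL_n}$ and of its determinant does give a surjection $A=\Lambda[x]/\bigl(x^n,\binom{n}{d}x^d,n\cdot_F x\bigr)\twoheadrightarrow\hh(PGL_n)$ (granting the localization hypothesis, which the statement ``for any oriented cohomology theory'' does not by itself supply). But the injectivity half, which is the actual content of the lemma, rests on two claims you only announce: (i) that $\CH^*(PGL_n,\Lambda)=\Lambda[x]/\bigl(\binom{n}{d}x^d\bigr)$, which you reduce to the assertion that every codimension-$d$ subvariety of the determinant hypersurface has degree divisible by $\gcd\bigl(\binom{n}{1},\dots,\binom{n}{d}\bigr)$ --- computing the full image of $i_*$ requires the pushforwards of all cycle classes supported on the determinantal strata closures, not just the description of the open strata as bundles over products of Grassmannians, and no argument is given; and (ii) that the connecting map of Proposition~\ref{GroupComparison} vanishes for $PGL_n$, equivalently $\cs(I)\hh(X)\cap\hh^{(i)}(X)=\cs(I)\hh^{(i-1)}(X)+\hh^{(i+1)}(X)$, together with the claim $Gr^*A=\CH^*(PGL_n,\Lambda)$, i.e.\ that every higher coefficient of $n\cdot_F x$ lands in the ideal $\bigl(\binom{n}{d}x^d\bigr)$ over $\Lambda$. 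These are precisely the statements that can fail: the paper's own $G_2$ computation shows the connecting map of Proposition~\ref{GroupComparison} is nonzero there, producing the extra relation $a_1y_3$ not visible in $\CH(G_2,\mathbb{L})$. As written, your argument establishes only the surjection $A\to\hh(PGL_n)$, and the hard direction is deferred to unproved assertions of comparable difficulty to the lemma itself.

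For contrast, the paper bypasses the associated-graded comparison entirely: it uses the presentation $\hh(SL_n/B)\cong\Lambda[x_1,\dots,x_n]/S(x_1,\dots,x_n)$ of Hornbostel--Kiritchenko together with the exact sequence $\hh(G)\cong\hh(G/B)/\cs(I_F)$ of Gille--Zainoulline, identifies the $x_i$ with $\cs(\chi_i)$, and observes that modulo the characteristic classes of the roots $n\chi_1,\chi_2-\chi_1,\dots,\chi_n-\chi_1$ all the $x_i$ become equal, so both generators and the exact relations (the specializations $f_i(x,\dots,x)=\binom{n}{\,\cdot\,}x^{\,\cdot}$ of the generators of the symmetric ideal, plus $n\cdot_F x$) come out of one computation with the characteristic map, with no appeal to the filtration or to $\CH^*(PGL_n)$. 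If you want to salvage your route, you must actually prove (i) and (ii); proving (ii) in particular amounts to controlling exactly the torsion phenomena that Proposition~\ref{GroupComparison} was designed to isolate, so it is not a formality.
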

\begin{proof}
Consider the variety of complete flags $X=SL_n/B$. Let $F_i$ denote the tautological vector bundle of dimension $i$ over $X$.
Let $L_1=F_1$ and $L_i=F_i/F_{i-1}$ for $i=2,\ldots n$. Then, by~\cite[Thm.~2.6]{HK} we have
\[\hh(X)\cong\Lambda[x_1,\ldots x_n]/S(x_1,\ldots x_n)\eqno(*)\]
where $S(x_1,\ldots x_n)$ denotes the ideal generated by positive degree symmetric polynomials in variables $x_1,\ldots x_n$,
and the isomorphism sends $x_i$ to the Chern class $c^{\hh}_1(L_i)$. 
The maximal split torus $T\subseteq SL_n$ consists of diagonal
matrices with trivial determinant. Let $\chi_i\in\hat{T}$ denote the
character that sends the diagonal matrix to its $i$-th entry. So, the
character lattice equals to
$M=\mathbb{Z}\chi_1\oplus\ldots\oplus\mathbb{Z}\chi_n/(\chi_1+\ldots +
\chi_n)$. Observe that $L_i$ coincides with the line bundle
$\mathcal{L}(\chi_i)$, so by definition we have that $x_i=\cs(\chi_i)$, where $\cs\colon\Lambda[[M]]_F\to\hh(X)$ is the characteristic map.
Note that the roots of $PGL_n=PSL_n/\mu_n$ are equal
$n\chi_1,\chi_2-\chi_1,\ldots, \chi_n-\chi_1$. 

According to~\cite[5.1]{GZ} we have
\[\hh(PGL_n)=\hh(X)/(\cs(n\chi_1),\cs(\chi_2-\chi_1),\ldots,\cs(\chi_n-\chi_1)).\] 
Then in the quotient we have
\[
\overline{\cs(\chi_i)}=\overline{\cs(\chi_1+\chi_i-\chi_1)}=\overline{\cs(\chi_1)+_F\cs(\chi_i-\chi_1)}=\overline{\cs(\chi_1)}.\] 
Taking
$x=\overline{\cs(\chi_1)}$ by $(*)$ we get
\[\hh(PGL_n)=\Lambda[x]/(S(x,\ldots,x), n\cdot_F x).\]
According to~\cite{HK} $S(x_1,\ldots x_n)$ is generated by polynomials
$f_n(x_n)$,
$f_{n-1}(x_n,x_{n-1})$, $\ldots$, $f_1(x_n,\ldots, x_i)$ where $f_i(x_n,\ldots,x_i)$ denotes the sum of all degree $i$ monomials in $x_n,\ldots, x_i$. Note that ${n\choose d}$ equals to the number of degree $d$ monomials in $n-d+1$ variables. Then substituting $x_1=\ldots x_n=x$ we obtain that $x^n,nx^{n-1},{n\choose d}x^{n-1},\ldots, nx$ generate the ideal $S(x,\ldots,x)$.
\end{proof}

\begin{ex}
For a prime number $p$ and $0<d<p$ the coefficient 
${p\choose d}$ is divisible by $p$.
By~\cite[Rem.~5.4.8]{Hazewinkel} over $\Lambda/p\Lambda$ we have
$p\cdot_Fx=p\beta_0(x)+\beta_1(x^p)$. Thus, the ideal
$I=(x^p,px^{p-1},\ldots{p\choose d}x^{d},\ldots, px, \ p\cdot_Fx)$ is
generated by $x^p$, $px$. So for any prime $p$ we
have \[\hh(PGL_p)=\frc{\Lambda[x]}{(px,x^p)}.\] In the case $\hh=K_0$
this agrees with \cite[3.6]{ZGamma}.
\end{ex}

\subsubsection*{Topological and the $\gamma$-filtration}\label{gammafiltr}
Proposition~\ref{GroupComparison} allows to estimate the difference
between the topological and the Grothendieck $\gamma$-filtration on
$K_0(G/B)$ for a split linear algebraic group $G$.
Namely, consider two filtrations on $K_0(G/B)$:
\begin{itemize}
\item[] $\gamma$-filtration: $\gamma^i(G/B)=\langle c_1(\mathcal{L}(\lambda))\mid \lambda\in T^*\rangle$~\cite[Definition 4.2]{ZGamma},
\item[] topological filtration: $\tau^i(G/B)=\langle[\mathcal{O}_V]\mid\codim(V)\geqslant i\rangle$.
\end{itemize}

\begin{prop}
Let $G$ be a split semisimple simply connected linear algebraic group
such that $\CH^i(G)=0$ for $1\leqslant i\leqslant n-1$ and
$\CH^n(G)\neq 0$.
Let $\zeta_w$ be a Schubert cell such that $\pi^{\tCH}(\zeta)$ is nontrivial in $\CH^n(G)$. 

Then $\gamma^i(G/B)+\tau^{i+1}(G/B)=\tau^i(G/B)$ for $i<n$ and the class of $\zeta_w^{K_0}$ is nontrivial in $\tau^n(G/B)/\gamma^n(G/B)$.
\end{prop}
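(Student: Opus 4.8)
The plan is to play the Chow side of Proposition~\ref{GroupComparison} (with $\hh = K_0$) against the comparison between the $\gamma$-filtration and the topological filtration. First I would recall that the $\gamma$-filtration $\gamma^i(G/B)$ is by construction the image of the ideal $\cs(I_{F_m})$ of $\Lambda[[T^*]]_{F_m}$ (for $F_m$ the multiplicative law of $K_0$) intersected with the appropriate degree, and that $\tau^i(G/B)$ coincides with the filtration $K_0^{(i)}(G/B)$ of Definition~\ref{filtration} by Remark~\ref{topological_filtration}, since $K_0$ is generically constant and localizing. The isomorphism $\Psi$ of Proposition~\ref{Psi_existence} identifies $Gr^* K_0(G/B)$ with $\CH^*(G/B,\mathbb Z)$, and under it $Gr\,\gamma^*$ goes to $\cs(I_{F_a})\CH^*$ by the commuting square in the unnamed lemma preceding Lemma~\ref{addker}; by Lemma~\ref{addker} the latter is $\sum\cs(x_\alpha)\CH^{*-1}(G/B,\mathbb Z)$.

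For the first assertion, fix $i<n$. I would show $\tau^i(G/B)\subseteq \gamma^i(G/B)+\tau^{i+1}(G/B)$ (the reverse inclusion being trivial). Passing to the associated graded, this is the statement that the surjection $\CH^i(G/B,\mathbb Z)\twoheadrightarrow \CH^i(G,\mathbb Z)$ is zero in degrees $1\le i\le n-1$, which is exactly the hypothesis $\CH^i(G)=0$ for $1\le i\le n-1$; equivalently, $\cs(I_{F_a})\CH^i = \CH^i(G/B,\mathbb Z)$ in that range, which is the Chow-theoretic input of Proposition~\ref{GroupComparison} together with the exactness of \eqref{eq:group_sequence}. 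So in $Gr^i$ we get $\tau^i(G/B) = \gamma^i(G/B) + \tau^{i+1}(G/B) \pmod{\tau^{i+1}}$, hence the claimed equality of subgroups of $K_0(G/B)$.

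For the second assertion, I would argue that $\zeta_w^{K_0}$ lies in $\tau^n(G/B)$ (indeed $\zeta_w^{K_0}=\zeta_w \in K_0^{(n)}=\tau^n$ since $l(w)=n$), and that its image in $\tau^n/\tau^{n+1} \cong \CH^n(G/B,\mathbb Z)$ under $\Psi$ is precisely $\zeta_w^{\tCH}$, by the final Lemma of Section~\ref{sec:filtration}. If $\zeta_w^{K_0}$ were in $\gamma^n(G/B)$, then its class in $Gr^n$ would land in $\cs(I_{F_a})\CH^n = \sum\cs(x_\alpha)\CH^{n-1}(G/B,\mathbb Z)$, i.e. would die in $\CH^n(G,\mathbb Z)$; but $\pi^{\tCH}(\zeta_w)\ne 0$ in $\CH^n(G)$ by hypothesis — a contradiction. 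Hence $\zeta_w^{K_0}$ is nontrivial in $\tau^n(G/B)/\gamma^n(G/B)$.

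The main obstacle is bookkeeping rather than conceptual: one must verify carefully that the $\gamma$-filtration of \cite{ZGamma} is indeed the image under $\cs$ of powers of $I$ for the multiplicative formal group law (so that $Gr\,\gamma^* = \cs(I_{F_a})\CH^*$ via $\Psi$), and that the identification $\tau^i = K_0^{(i)}$ from Remark~\ref{topological_filtration} is compatible with the map $\pi^{K_0}_{G/B}$, so that the degree-$n$ computation really detects nonvanishing in $\CH^n(G)$ and not merely in $\CH^n(G/B)$. Once these identifications are in place, both claims fall out of Proposition~\ref{GroupComparison} applied in degrees $i<n$ and $i=n$ respectively.
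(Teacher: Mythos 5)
Your overall strategy --- playing Proposition~\ref{GroupComparison} for $\hh=K_0$ against the isomorphism $\Psi$ of Proposition~\ref{Psi_existence} and Lemma~\ref{addker} --- is the same as the paper's, and your treatment of the second assertion is sound: there you only need the easy inclusion that the image of $\gamma^n$ in $\tau^n/\tau^{n+1}\cong\CH^n(G/B)$ is killed by $\CH^n(G/B)\to\CH^n(G)$, so $\zeta_w^{K_0}\in\gamma^n$ would contradict $\pi^{\tCH}(\zeta_w)\neq 0$ (only note that the relevant cell has $l(w)=N-n$, i.e.\ codimension $n$, not $l(w)=n$). The gap is in the first assertion. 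You assert that under $\Psi$ the image of $\gamma^i$ in $\tau^i/\tau^{i+1}$ is the ideal piece $\cs(I_{F_a})\CH(G/B)\cap\CH^i=\sum_\alpha\cs(x_\alpha)\CH^{i-1}(G/B)$, ``by the commuting square''. The square gives less than that: a typical element of $\gamma^i$ is a sum of terms $c_1(\mathcal{L}(\lambda_1))\cdots c_1(\mathcal{L}(\lambda_i))\cdot y$ with $y\in K_0(G/B)$, and modulo $\tau^{i+1}$ only the rank of $y$ survives (since $y-\mathrm{rk}(y)\in\tau^1$ and $\tau^i\tau^1\subseteq\tau^{i+1}$); hence the image of $\gamma^i$ in $\CH^i(G/B)$ is only the $\mathbb{Z}$-span of degree-$i$ products of Chow first Chern classes, i.e.\ the degree-$i$ part of the image of the additive characteristic map, which in general is strictly smaller than $\sum_\alpha\cs(x_\alpha)\CH^{i-1}(G/B)$. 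Consequently, for a fixed $i$ the single hypothesis $\CH^i(G)=0$ does not yield $\tau^i\subseteq\gamma^i+\tau^{i+1}$ by one passage to the associated graded, as your argument claims; the statement for degree $i$ genuinely uses the vanishing in all lower degrees.

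The repair is exactly the induction the paper runs. First, since $G$ is simply connected, $K_0(G)=\mathbb{Z}$ by Panin, so the characteristic map $\cs$ is surjective and $\gamma^1=\tau^1$; this is also what legitimizes identifying $\gamma^i$ with (products of) the classes $\cs(x_\lambda)$ --- the point you deferred as ``bookkeeping''. Then Proposition~\ref{GroupComparison} (whose last term vanishes because $K_0(G)=\mathbb{Z}$) gives, whenever $\CH^i(G)=0$, the equality $\gamma^1\tau^{i-1}+\tau^{i+1}=\tau^i$, and the inductive hypothesis $\tau^{i-1}=\gamma^{i-1}+\tau^i$ yields $\gamma^1\tau^{i-1}\subseteq\gamma^1\gamma^{i-1}+\gamma^1\tau^i\subseteq\gamma^i+\tau^{i+1}$, whence $\tau^i=\gamma^i+\tau^{i+1}$ for all $i<n$. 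Equivalently, on the Chow side you must first know that $\CH^{i-1}(G/B)$ is already generated by products of first Chern classes (true by induction because $\CH^j(G)=0$ for $j\le i-1$) before $\sum_\alpha\cs(x_\alpha)\CH^{i-1}(G/B)$ can be absorbed into the image of $\gamma^i$. With this induction inserted, your argument closes and coincides in substance with the paper's proof.
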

\begin{proof}
As shown in~\cite{Panin} $K_0(G)=\mathbb{Z}$ for a simply connected group $G$. Then
characteristic map $\cs$ is surjective~\cite[\S 1B]{GarZ}. We have $K_0^{(1)}(G/B)=\tau^1=\gamma^1$.
Note that $K_0^{(i)}(G/B)=\tau^i.$ Then $\gamma^1\tau^0\cap\tau^i=\tau^i$
and the Proposition~\ref{GroupComparison} gives us a short exact sequence for all $i\geqslant 1$:
\[
0\to\tfrac{\gamma^1\tau^{i-1}}{\tau^{i+1}}\to \tfrac{\tau^i}{\tau^{i+1}}\to\CH^i(G,\mathbb{Z}[\beta,\beta^{-1}])\to 0.
\]
Then for any $1\leqslant i<n$ we have $\frc{\gamma^1\tau^{i-1}}{\tau^{i+1}}=\frc{\tau^i}{\tau^{i+1}}.$
By induction we get $\tau^i=\gamma^i+\tau^{i+1}$ for $i<n$ and for $i=n$ we get
By induction we get $\tau^i=\gamma^i+\tau^{i+1}$ for $i<n$ and for $i=n$ we get
\[
0\to\tfrac{\gamma^n}{\tau^{n+1}}\to \tfrac{\tau^n}{\tau^{n+1}}\to\CH^n(G,\mathbb{Z}[\beta,\beta^{-1}])\to 0.
\]
So for any nontrivial element of of $\CH^n(G)$ the class of its preimage is nontrivial in $\tau^{n}/\gamma^n$.
\end{proof}

\section{Applications to $\hh$-motivic decompositions}\label{Motives}
Throughout this section we consider a generically cellular variety $X$
of dimension $N$ and an oriented cohomology theory $\hh^*$ that is
generically constant and is associated with weak Borel-Moore homology
$\hh_*$ which satisfies the localization property. 
These assumptions imply that the generalized degree formula of Levine-Morel~\cite[Theorem 4.4.7]{LM} holds. 
The aim of this section is to prove theorems A, B  and C of
the introduction which provide a comparison between the Chow motive
$M(X)$ and the $\hh$-motive $M^{\hh}(X)$ of $X$. 

\medskip

Let $L$ be the splitting field of $X$ and $\overline{X}=X\times_{\Spec k}\Spec L$. Let $p$ denote the projection $p\colon\overline{X}\times\overline{X}\to X\times X$. Since $\overline{X}$ is cellular, we may consider a filtration on $\hh(\overline{X})$ introduced in~\ref{filtration}. It gives rise to a filtration on $\hh(\overline{X}\times\overline{X})=\hh(\overline{X})\otimes_{\Lambda}\hh(\overline{X})$.
Namely, we set 
\[
\hh^{(l)}(\overline{X}\times\overline{X})=\sum_{i+j=l}\hh^{(i)}(\overline{X})\otimes_{\Lambda}\hh^{(j)}(\overline{X}).
\]
On $\hh(X\times X)$ we consider the induced filtration 
\[\hh^{(l)}(X\times X)=(p^{\hh})^{-1}(\hh^{(l)}(\overline{X}\times\overline{X})).\]  

Denote the quotient
$\hh^{(l)}(\overline{X}\times\overline{X})/\hh^{(l+1)}(\overline{X}\times\overline{X})$
by $\hh^{(l/l+1)}(\overline{X}\times\overline{X})$ and denote by
$pr_{l}\colon\hh^{(l)}(\overline{X}\times\overline{X})\to\hh^{(l/l+1)}(\overline{X}\times\overline{X})$
the usual projection.
Denote \[
\hh^{(i)}_{2N-i}(X\times X)=\hh^{(i)}(X\times X)\cap\hh_{2N-i}(X\times
X)\text{ and}\] \[\hh^{(i)}_{2N-i}(\overline{X}\times
\overline{X})=\hh^{(i)}(\overline{X}\times
\overline{X})\cap\hh_{2N-i}(\overline{X}\times \overline{X}).
\]

\begin{lem}
There is a graded ring isomorphism 
\[
\Phi\colon\bigoplus_{i=0}^{2N}\hh^{(i/i+1)}(\overline{X}\times\overline{X})\to\CH^*(\overline{X}\times\overline{X},\Lambda).
\]
\end{lem}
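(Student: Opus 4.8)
The plan is to reduce the statement to Proposition~\ref{Psi_existence} together with the K\"unneth isomorphism of Proposition~\ref{structure}(2). First I would observe that since $\overline{X}$ is cellular, $\overline{X}\times\overline{X}$ is cellular as well (a product of cellular varieties is cellular, with cells given by products of cells), so Proposition~\ref{Psi_existence} already provides a graded ring isomorphism $\Psi_{\overline{X}\times\overline{X}}\colon Gr^*\hh(\overline{X}\times\overline{X})\to\CH^*(\overline{X}\times\overline{X},\Lambda)$ for the filtration attached to \emph{that} cellular structure. So the only real content is to check that the filtration defined in this section via
\[
\hh^{(l)}(\overline{X}\times\overline{X})=\sum_{i+j=l}\hh^{(i)}(\overline{X})\otimes_{\Lambda}\hh^{(j)}(\overline{X})
\]
coincides with the filtration of Definition~\ref{filtration} for the product cellular structure. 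This is essentially bookkeeping: by Proposition~\ref{structure}(1) the basis $\{\zeta_i^{\hh}\otimes\zeta_j^{\hh}\}$ of $\hh(\overline{X}\times\overline{X})$ is exactly the Tate basis coming from the product cellular structure, the cell $\mathbb{A}^{\alpha_i}\times\mathbb{A}^{\alpha_j}$ having dimension $\alpha_i+\alpha_j$, and the span of those basis elements with $\alpha_i+\alpha_j\ge l$ is precisely $\sum_{i+j=l}\hh^{(i)}(\overline{X})\otimes_\Lambda\hh^{(j)}(\overline{X})$.

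Granting this identification, I would simply set $\Phi$ to be $\Psi$ of Proposition~\ref{Psi_existence} applied to the cellular variety $\overline{X}\times\overline{X}$; it is a graded ring isomorphism by that proposition. Alternatively, and perhaps more transparently, one can define $\Phi$ directly as the composite
\[
Gr^*\hh(\overline{X}\times\overline{X})\;\cong\;Gr^*\bigl(\hh(\overline{X})\otimes_\Lambda\hh(\overline{X})\bigr)\;\cong\;Gr^*\hh(\overline{X})\otimes_\Lambda Gr^*\hh(\overline{X})\;\xrightarrow{\ \Psi\otimes\Psi\ }\;\CH^*(\overline{X},\Lambda)\otimes_\Lambda\CH^*(\overline{X},\Lambda)\;\cong\;\CH^*(\overline{X}\times\overline{X},\Lambda),
\]
where the first isomorphism is the K\"unneth formula of Proposition~\ref{structure}(2), the second is the standard fact that the associated graded of a tensor product of filtered modules (each filtered-free over $\Lambda$, with the tensor-product filtration) is the tensor product of the associated gradeds, and the last is the Chow K\"unneth isomorphism for the cellular variety $\overline{X}$ (e.g.\ \cite[Cor.~66.4]{EKM}). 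Each arrow is a graded ring homomorphism and an isomorphism, so $\Phi$ is as claimed.

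The main point to be careful about — the step I expect to be the only nontrivial one — is the compatibility of the multiplicative structures across the K\"unneth isomorphism at the level of associated graded rings: one must check that the product on $Gr^*\hh(\overline{X}\times\overline{X})$ induced by the cup product, transported via K\"unneth, agrees with the tensor-product ring structure on $Gr^*\hh(\overline{X})\otimes_\Lambda Gr^*\hh(\overline{X})$. This follows because the cup product on $\hh(\overline{X}\times\overline{X})$ is the tensor-square product under K\"unneth, the filtration is the tensor-product filtration, and passing to $Gr^*$ commutes with tensor products over $\Lambda$ of filtered-free modules; Lemma~\ref{filtr_prod} guarantees the filtration is multiplicative so that $Gr^*$ is indeed a ring. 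Once this is in place, the grading is respected because $\zeta_i^{\hh}\otimes\zeta_j^{\hh}$ sits in filtration level $\alpha_i+\alpha_j$ on the left and maps to $\zeta_i^{\tCH}\otimes\zeta_j^{\tCH}$ in codimension $\alpha_i+\alpha_j$ on the right, using the Lemma preceding Section~\ref{Group cohomology} which records $\Psi(\zeta_i^{\hh}+\hh^{(\alpha_i+1)}(\overline{X}))=\zeta_i^{\tCH}$.
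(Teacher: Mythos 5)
Your proposal is correct and, in its second formulation, is exactly the paper's argument: the paper simply takes $\Phi=\Psi\otimes\Psi$ using the K\"unneth identifications of $\hh(\overline{X}\times\overline{X})$ and $\CH(\overline{X}\times\overline{X},\Lambda)$ as tensor squares, which is what you spell out (with more detail on the multiplicative and grading compatibilities than the paper gives). Your first route via the product cellular structure is an equivalent variant of the same idea, so no further comment is needed.
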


\begin{proof}
Since
\[\bigoplus_{i=0}^{2N}\hh^{(i/i+1)}(\overline{X}\times\overline{X})=\bigoplus_{i=0}^{N}\hh^{(i/i+1)}(\overline{X})\otimes_{\Lambda}\bigoplus_{i=0}^{N}\hh^{(i/i+1)}(\overline{X})\]
and
$\CH(\overline{X}\times\overline{X},\Lambda)=\CH(\overline{X},\Lambda)\otimes_{\Lambda}\CH(\overline{X},\Lambda)$
take $\Phi=\Psi\otimes\Psi$, where $\Psi$ is defined in \ref{Psi_existence}.
\end{proof}

\begin{rem}
The restriction of $\Phi_i$ gives an isomorphism $\Phi_i\colon\hh^{(i/i+1)}_{2N-i}(\overline{X}\times\overline{X})\to\CH^i(\overline{X}\times\overline{X},\Lambda^{0})$.
\end{rem}

The following lemma provides an $\hh$-version of the Rost Nilpotence Theorem:

\begin{lem}\label{Rost}
The kernel of the pullback map $p^{\hh}\colon End(M^{\hh}(X))\to End(M^{\hh}(\overline{X}))$ consists of nilpotents.
\end{lem}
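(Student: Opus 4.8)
The plan is to reduce the statement to the classical Rost Nilpotence Theorem for Chow motives by means of the filtration machinery developed in Section~\ref{sec:filtration}. Recall that $\mathrm{End}(M^{\hh}(X)) = \hh_N(X\times X)$ carries the induced filtration $\hh^{(i)}(X\times X) = (p^{\hh})^{-1}(\hh^{(i)}(\overline{X}\times\overline{X}))$; restricting to the summand of correct dimension gives a filtration $\hh^{(i)}_{2N-i}(X\times X)$ on the endomorphism ring, compatible with composition of correspondences because composition is built from pullbacks and pushforwards along projections, which respect the filtration (Lemma~\ref{filtr_prod} and its product over $X\times X$). The key point is that an element $\varphi$ in the kernel of $p^{\hh}$ lies in $\hh^{(i)}_{2N-i}(X\times X)$ for \emph{every} $i$ by definition of the induced filtration (since $p^{\hh}(\varphi)=0 \in \hh^{(i)}(\overline{X}\times\overline{X})$ for all $i$); but the filtration on $\overline{X}\times\overline{X}$ is finite (it stops at $2N$), hence so is the induced filtration, and therefore $\ker p^{\hh} \subseteq \hh^{(2N+1)}_{-1}(X\times X) = 0$ would be the naive conclusion --- except that this is \emph{not} quite right, because the induced filtration need not be separated: $(p^{\hh})^{-1}(0)$ can be large.

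So the real argument must be: take $\varphi \in \ker p^{\hh}$. I want to show $\varphi^{\circ m} = 0$ for some $m$. First I would pass to the associated graded. Since $p^{\hh}(\varphi) = 0$, a fortiori $\varphi \in \hh^{(1)}_{2N-1}(X\times X)$, i.e. its image in $\mathrm{Gr}^0 = \hh^{(0/1)}_{2N}(X\times X) \cong \CH^0(\overline{X}\times\overline{X},\Lambda^0)$ is zero --- wait, more is true. Consider the composite $\mathrm{End}(M^{\hh}(X)) \to \mathrm{End}(M^{\hh}(\overline{X})) = \hh_N(\overline{X}\times\overline{X}) \to \mathrm{Gr}^* \cong \CH^*(\overline{X}\times\overline{X},\Lambda)$, and in top dimension $\to \CH^N(\overline{X}\times\overline{X},\Lambda^0) = \mathrm{End}(M^{\tCH}(\overline{X},\Lambda^0))$. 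The composite ring homomorphism $\mathrm{End}(M^{\hh}(X)) \to \mathrm{End}(M^{\tCH}(\overline{X},\Lambda^0))$ factors through $\mathrm{End}(M^{\tCH}(X,\Lambda^0))$ via the graded-ring isomorphism $\Phi$ of the Lemma preceding this one combined with the specialization $\nu$; call this factorization $q$. The classical Rost Nilpotence Theorem (valid for Chow motives of generically cellular varieties under our standing hypotheses, e.g.\ \cite{EKM}) tells us $\ker\bigl(\mathrm{End}(M^{\tCH}(X,\Lambda^0)) \to \mathrm{End}(M^{\tCH}(\overline{X},\Lambda^0))\bigr)$ is a nil ideal. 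Hence if $\varphi \in \ker p^{\hh}$, its image under $q$ is nilpotent, so some power $\varphi^{\circ m}$ maps to $0$ under $q$, which means $\varphi^{\circ m} \in \hh^{(1)}_{2N-1}(X\times X)$, i.e.\ it drops one step in the filtration.

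Now I iterate. The crucial structural fact is that $\ker p^{\hh} \cap \hh^{(i)}_{2N-i}(X\times X)$, after applying the next graded piece $\mathrm{Gr}^i \to \CH^i(\overline{X}\times\overline{X},\Lambda^0)$, again lands in (a twist of) the kernel of the Chow-motivic restriction, because the graded pieces of the endomorphism filtration are, via $\Phi$, identified with graded pieces of $\CH(\overline{X}\times\overline{X})$ and the restriction maps are compatible; by Rost nilpotence applied at each level, raising to a further power pushes the correspondence from $\hh^{(i)}_{2N-i}$ into $\hh^{(i+1)}_{2N-i-1}$. Since the filtration on $\hh(\overline{X}\times\overline{X})$ terminates at degree $2N$ and $\varphi$ has fixed degree $N$, after finitely many such power-and-drop steps --- here one must be careful that composition of two elements of $\hh^{(i)}_{2N-i}$ lands in $\hh^{(2i)}$, so the filtration level at least doubles and certainly increases --- we reach $\hh^{(2N+1)}_{-1}(X\times X) = (p^{\hh})^{-1}(0)\cap\{\text{degree exceeding }2N\}$, but an element of $\hh_N(X\times X)$ that pulls back into $\hh^{(2N+1)}(\overline{X}\times\overline{X}) = 0$ and is itself of top filtration must actually be $0$ in $\hh_N(\overline{X}\times\overline{X})$; combined with the fact that by then it is also a power of an element of $\ker p^{\hh}$, one more application of Rost nilpotence at the bottom forces it to be genuinely nilpotent, in fact zero.

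The main obstacle I anticipate is \emph{bookkeeping the interaction between composition of correspondences and the two gradings} (the motivic/dimension grading and the filtration-by-codimension grading) --- specifically, verifying that composition $\hh^{(i)}_{2N-i} \circ \hh^{(j)}_{2N-j} \subseteq \hh^{(i+j)}_{2N-i-j}$ and that the associated graded of the endomorphism ring, with its composition product, is identified (via $\Phi$, or rather the restriction of $\Phi$ to dimension-$N$ classes together with the Künneth identification of Proposition~\ref{structure}) with the Chow endomorphism ring $\mathrm{End}(M^{\tCH}(\overline{X},\Lambda^0))$ with \emph{its} composition product. Once that compatibility is in place, the nilpotence follows formally by dévissage along the finite filtration from classical Rost nilpotence for Chow motives. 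I would also need to confirm that the Chow Rost Nilpotence Theorem is available in the generically cellular generality we need, which it is under the localization hypothesis imposed at the start of Section~\ref{Motives}.
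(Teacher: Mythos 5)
Your argument never actually crosses the only hard bridge in this lemma, namely from ``$\varphi$ dies over $\overline{X}$'' to ``$\varphi$ is nilpotent over $X$''. The central device you invoke, a ring homomorphism $q\colon End(M^{\hh}(X))\to End(M^{\tCH}(X,\Lambda^0))$, does not exist in general. First, already over the splitting field the composite $End(M^{\hh}(\overline{X}))=\hh_N(\overline{X}\times\overline{X})\to \CH^N(\overline{X}\times\overline{X},\Lambda^0)$ is undefined: a class of homological degree $N$ need not lie in $\hh^{(N)}(\overline{X}\times\overline{X})$ (that containment is precisely the extra hypothesis $\Lambda^1=\dots=\Lambda^N=0$ of Lemma~\ref{indecomp_lem}), and a ``leading term'' map from $\hh$ to its associated graded is neither additive nor multiplicative. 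Second, the comparison $\Phi$ lives only on the associated graded of the \emph{split} variety; Lemmas~\ref{rat} and~\ref{subset} give an inclusion of images $\im(\Phi^m\circ p^{\tCH})\subseteq\im(pr_m\circ p^{\hh})$, not a homomorphism from $\hh$-correspondences on $X$ to Chow correspondences on $X$ --- for a general theory (e.g.\ $\hh=K_0$) there is no natural transformation $\hh\to\CH(-,\Lambda^0)$ at all; the canonical map goes from $\Omega$, not from $\hh$. Third, the d\'evissage itself cannot detect anything: the filtration on $\hh(X\times X)$ is the preimage filtration $(p^{\hh})^{-1}\bigl(\hh^{(l)}(\overline{X}\times\overline{X})\bigr)$, so every element of $\ker p^{\hh}$ lies in \emph{every} level from the start, and all the graded invariants you propose are computed after applying $p^{\hh}$, where the kernel is invisible (your degree bookkeeping $\varphi^{\circ m}\in\hh^{(1)}_{2N-1}$ is also off, since endomorphisms stay in homological degree $N$). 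Your closing step, ``one more application of Rost nilpotence at the bottom forces it to be \dots zero'', invokes exactly the statement being proved, so the argument is circular at the decisive point.

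The paper's proof avoids any map out of $\hh$ and goes through cobordism instead: Rost nilpotence holds for $End(M^{\tCH}(X))$ by \cite[Prop.~3.1]{VZ}, it lifts to $End(M^{\Omega}(X))$ because the canonical map $\Omega\to\CH$ is surjective with nilpotent kernel \cite[Prop.~2.7]{VY}, it passes to $\Omega\otimes_{\mathbb{L}}\Lambda$ by right-exactness of the tensor product, and it finally descends to $\hh$ because $\nu_{X\times X}\colon\Omega(X\times X)\otimes_{\mathbb{L}}\Lambda\to\hh(X\times X)$ is surjective under the standing hypotheses (generalized degree formula) while $\nu_{\overline{X}\times\overline{X}}$ is an isomorphism since $\overline{X}\times\overline{X}$ is cellular (Proposition~\ref{structure}(3)). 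If you want to salvage your plan, replace the nonexistent $q$ by a lift along $\nu_{X\times X}$ to $\Omega(X\times X)\otimes\Lambda$ and then use the genuine specialization $\Omega\to\CH$; once you do that, the filtration of Section~\ref{sec:filtration} is not needed for this lemma at all.
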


\begin{proof}
Consider a diagram
\[
\begin{xymatrix}{
End(M^{\Omega}(X))\ar[d]\ar[r]^{p^{\Omega}} & End(M^{\Omega}(\overline{X}))\ar[d] \\
End(M^{\tCH}(X))\ar[r] & End(M^{\tCH}(\overline{X}))
}\end{xymatrix}
\]
where vertical arrows are ring homomorphisms that arise from the
canonical map $\Omega(-)\to\CH(-)$. By~\cite[Prop.~2.7]{VY} they are
surjective with kernels consisting of nilpotents. The kernel of the
bottom arrow consists of nilpotents by~\cite[Prop~3.1]{VZ}. Then the
kernel of the upper arrow consists of nilpotents as well.

Tensoring the upper arrow with $\Lambda$ we obtain
$\ker(p^{\Omega})\otimes\Lambda\to\Omega_N(X\times X)\otimes\Lambda\stackrel{p^{\Omega}\otimes id}\longrightarrow\Omega(\overline{X}\times\overline{X})\otimes\Lambda$,
so $\ker(p^{\Omega})\otimes\Lambda$ covers the kernel of $p^{\Omega}\otimes id$, thus $\ker(p^{\Omega}\otimes id)$ consists of nilpotents. Now the specialization maps fit into the commutative diagram.
\[
\begin{xymatrix}{
\Omega_N(X\times X)\otimes_{\mathbb{L}}\Lambda\ar[d]^{\nu_{X\times X}}\ar[rr]^{p^{\Omega}\otimes id} && \Omega(\overline{X}\times \overline{X})\otimes_{\mathbb{L}}\Lambda\ar[d]^{\cong} \\
\hh(X\times X)\ar[rr]^{p^{\hh}} && \hh(\overline{X}\times\overline{X})
}\end{xymatrix}
\]
where the right arrow is an isomorphism by~\ref{Cohomology_of_cellular_variety}
and~\ref{structure}, and the map $\nu_{X\times X}$ is surjective. So the kernel of the bottom map consists of nilpotents.
\end{proof}

\medskip

\begin{lem}~\label{filtr_comp} We have
$\hh^{(N+i)}(\overline{X}\times\overline{X})\circ \hh^{(N+j)}(\overline{X}\times\overline{X})\subseteq\hh^{(N+i+j)}(\overline{X}\times\overline{X})$.
\end{lem}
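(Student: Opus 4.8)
The plan is to reduce the statement to an explicit computation in the ``matrix unit'' basis of $\hh(\overline{X}\times\overline{X})$ coming from the cellular structure of $\overline{X}$. Fix the $\Lambda$-bases $\{\zeta_a\in\hh_{\alpha_a}(\overline{X})\}$ and $\{\tau_a\in\hh^{\alpha_a}(\overline{X})\}$ furnished by Proposition~\ref{structure} and Lemma~\ref{Tate_basis}, let $p_1,p_2$ denote the two projections $\overline{X}\times\overline{X}\to\overline{X}$, and for a pair of indices $a,b$ set $f_{ab}=p_1^{\hh}(\zeta_a)\cdot p_2^{\hh}(\tau_b)\in\hh(\overline{X}\times\overline{X})$. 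By the K\"unneth isomorphism of Proposition~\ref{structure}(2) the $f_{ab}$ form a $\Lambda$-basis of $\hh(\overline{X}\times\overline{X})$. First I would record, straight from Definition~\ref{filtration} and the definition $\hh^{(l)}(\overline{X}\times\overline{X})=\sum_{p+q=l}\hh^{(p)}(\overline{X})\otimes_{\Lambda}\hh^{(q)}(\overline{X})$, the identity
\[
\hh^{(l)}(\overline{X}\times\overline{X})=\bigoplus_{(N-\alpha_a)+\alpha_b\geqslant l}\Lambda\, f_{ab},
\]
which follows because $\zeta_a\in\hh^{(p)}(\overline{X})$ iff $p\leqslant N-\alpha_a$ and $\tau_b\in\hh^{(q)}(\overline{X})$ iff $q\leqslant\alpha_b$.

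The key step is the composition rule for these basis elements, namely
\[
f_{ab}\circ f_{cd}=\pi_{\hh}(\tau_b\zeta_c)\cdot f_{ad},
\]
where $\pi\colon\overline{X}\to pt$ is the structure map. I would obtain this by unwinding the definition of $\circ$: pulling $f_{ab}$ and $f_{cd}$ back to $\overline{X}\times\overline{X}\times\overline{X}$, the two factors lying over the middle copy of $\overline{X}$ are $\tau_b$ and $\zeta_c$, so their product contributes $\tau_b\zeta_c$ there, and pushing forward along the projection that forgets the middle copy---which is the base change of $\pi$ along $\overline{X}\times\overline{X}\to pt$---produces the scalar $\pi_{\hh}(\tau_b\zeta_c)$ times the leftover class $p_1^{\hh}(\zeta_a)\cdot p_3^{\hh}(\tau_d)=f_{ad}$, by the projection formula together with the base-change axiom. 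Next I would observe that $\pi_{\hh}(\tau_b\zeta_c)\in\Lambda^{\alpha_b-\alpha_c}$ and that it vanishes whenever $\alpha_b>\alpha_c$: indeed $\tau_b=\nu(\tau_b^{\Omega}\otimes 1)$ and $\zeta_c=\nu(\zeta_c^{\Omega}\otimes 1)$, and since $\nu$ commutes with products and pushforwards we get $\pi_{\hh}(\tau_b\zeta_c)=\nu\big(\pi_{\Omega}(\tau_b^{\Omega}\zeta_c^{\Omega})\otimes 1\big)$ with $\pi_{\Omega}(\tau_b^{\Omega}\zeta_c^{\Omega})\in\mathbb{L}^{\alpha_b-\alpha_c}$, and $\mathbb{L}^{\alpha_b-\alpha_c}=0$ for $\alpha_b-\alpha_c>0$.

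Assembling the pieces: write $\phi=\sum c_{ab}f_{ab}\in\hh^{(N+i)}(\overline{X}\times\overline{X})$, so that by the first display $c_{ab}\neq 0$ forces $\alpha_b-\alpha_a\geqslant i$, and $\psi=\sum e_{cd}f_{cd}\in\hh^{(N+j)}(\overline{X}\times\overline{X})$, so that $e_{cd}\neq 0$ forces $\alpha_d-\alpha_c\geqslant j$. Since $\circ$ is $\Lambda$-bilinear, $\phi\circ\psi=\sum_{a,b,c,d}c_{ab}e_{cd}\,\pi_{\hh}(\tau_b\zeta_c)\,f_{ad}$, and a summand survives only when $\alpha_b\leqslant\alpha_c$; for such a summand
\[
\alpha_d-\alpha_a=(\alpha_d-\alpha_c)+(\alpha_c-\alpha_b)+(\alpha_b-\alpha_a)\geqslant j+0+i,
\]
hence $(N-\alpha_a)+\alpha_d\geqslant N+i+j$ and $f_{ad}\in\hh^{(N+i+j)}(\overline{X}\times\overline{X})$ by the first display. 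As $\hh^{(N+i+j)}(\overline{X}\times\overline{X})$ is a $\Lambda$-submodule, $\phi\circ\psi$ lies in it, which is the assertion.

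The only genuine work is the composition identity $f_{ab}\circ f_{cd}=\pi_{\hh}(\tau_b\zeta_c)f_{ad}$ and the accompanying degree bookkeeping for the scalar $\pi_{\hh}(\tau_b\zeta_c)$; I expect the main obstacle to be carrying out the base-change/projection-formula computation cleanly and keeping straight which of the four indices gets contracted. Everything else is formal, and (if one prefers) the whole argument may instead be run first for $\hh=\Omega$ and then transported along the isomorphism $\nu_{\overline{X}\times\overline{X}}$ of Proposition~\ref{structure}(3), which commutes with $\circ$.
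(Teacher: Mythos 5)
Your proof is correct and follows essentially the same route as the paper: expand in the basis $\zeta_a\otimes\tau_b$, note the filtration condition $(N-\alpha_a)+\alpha_b\geqslant l$, use the composition rule contracting the inner indices, and add the inequalities. The only cosmetic difference is that the paper invokes the exact duality $\pi_{\hh}(\tau_b\zeta_c)=\delta_{b,c}$ from Lemma~\ref{Tate_basis}, whereas you use the (sufficient) weaker vanishing of the scalar for degree reasons via $\Omega$.
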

\begin{proof}
Consider a generator $\zeta_m\otimes\tau_n\in\hh^{(N+i)}(\overline{X}\times\overline{X})$ where $N-\alpha_m+\alpha_n\geqslant N+i$
and $\zeta_{m'}\otimes\tau_{n'}\in\hh^{(N+j)}(\overline{X}\times\overline{X})$ where $N-\alpha_{m'}+\alpha_{n'}\geqslant N+j$.
The composition 
\[
(\zeta_{m}\otimes\tau_{n})\circ(\zeta_{m'}\otimes\tau_{n'})=\deg(\tau_{n}\zeta_{m'})(\zeta_{m}\otimes\tau_{n'})=\delta_{n,m'}\cdot(\zeta_{m}\otimes\tau_{n'})
\]
is nonzero iff $n=m'$.
In this case $N-m+n'=(N-m+n)+(N-m'+n')-N\geqslant N+i+j$. Thus $\zeta_{m}\otimes\tau_{n''}$ lies in $\hh^{(N+i+j)}(\overline{X}\times\overline{X}).$
\end{proof}
\begin{rem}
Indeed, the lemma implies that $\hh^{(N)}(\overline{X}\times\overline{X})$ is a ring with respect to the composition product, and $\hh^{(N+1)}(\overline{X}\times\overline{X})$ is its two-sided ideal. Since the composition of homogeneous elements is homogeneous, $\hh^{(N)}_N(\overline{X}\times\overline{X})$ is also a ring with respect to the composition.
\end{rem}
\begin{lem}
The isomorphism $\Phi_N\colon\hh^{(N/N+1)}(\overline{X}\times\overline{X})\to\CH^N(\overline{X}\times\overline{X},\Lambda)$
is a ring homomorphism with respect to the composition product.
\end{lem}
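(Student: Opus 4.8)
The plan is to compute both composition products explicitly in the natural dual bases and to observe that the isomorphism $\Phi_N$ simply identifies them. First I would fix the dual bases $\{\zeta_i\in\hh_{\alpha_i}(\overline{X})\}$ and $\{\tau_i\in\hh^{\alpha_i}(\overline{X})\}$ of $\hh(\overline{X})$ supplied by Proposition~\ref{structure} and Lemma~\ref{Tate_basis}, so that $\pi_{\hh}(\tau_i\zeta_j)=\delta_{i,j}$ and $\sum_i\zeta_i\otimes\tau_i=\Delta_{\hh}(1)$. By the K\"unneth formula (Proposition~\ref{structure}) the products $\zeta_i\otimes\tau_j$ form a $\Lambda$-basis of $\hh(\overline{X}\times\overline{X})$, and $\zeta_i\otimes\tau_j$ sits in filtration level exactly $(N-\alpha_i)+\alpha_j$. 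Hence its class
\[
e_{ij}:=\zeta_i\otimes\tau_j+\hh^{(N+1)}(\overline{X}\times\overline{X})
\]
is a nonzero element of $\hh^{(N/N+1)}(\overline{X}\times\overline{X})$ exactly when $\alpha_i=\alpha_j$, and the $e_{ij}$ with $\alpha_i=\alpha_j$ form a $\Lambda$-basis of it. By the definition of $\Psi$ and the Lemma following Proposition~\ref{Psi_existence}, the isomorphism $\Phi_N=\Psi\otimes\Psi$ carries $e_{ij}$ to $\zeta^{\tCH}_i\otimes\tau^{\tCH}_j$, and since $\{\zeta^{\tCH}_i\},\{\tau^{\tCH}_i\}$ are the corresponding dual bases of $\CH^*(\overline{X},\Lambda)$ coming from the splitting of $M^{\tCH}(\overline{X})$, the elements $\zeta^{\tCH}_i\otimes\tau^{\tCH}_j$ with $\alpha_i=\alpha_j$ form a $\Lambda$-basis of $\CH^N(\overline{X}\times\overline{X},\Lambda)$.

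Next I would compare the two composition products on these bases. Lifting $e_{ij}$ to $\zeta_i\otimes\tau_j\in\hh^{(N)}(\overline{X}\times\overline{X})$, the computation already carried out in the proof of Lemma~\ref{filtr_comp} is the \emph{exact} identity $(\zeta_i\otimes\tau_j)\circ(\zeta_k\otimes\tau_l)=\pi_{\hh}(\tau_j\zeta_k)\,(\zeta_i\otimes\tau_l)=\delta_{j,k}\,(\zeta_i\otimes\tau_l)$, so it descends to $e_{ij}\circ e_{kl}=\delta_{j,k}\,e_{il}$ in the graded ring; note that in the only nonzero case $j=k$ one has $\alpha_i=\alpha_j=\alpha_k=\alpha_l$, so $e_{il}$ is again one of our basis elements. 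The identical correspondence computation in $\CH^*(\overline{X}\times\overline{X},\Lambda)$, now using $\deg(\tau^{\tCH}_j\zeta^{\tCH}_k)=\delta_{j,k}$, gives $(\zeta^{\tCH}_i\otimes\tau^{\tCH}_j)\circ(\zeta^{\tCH}_k\otimes\tau^{\tCH}_l)=\delta_{j,k}\,(\zeta^{\tCH}_i\otimes\tau^{\tCH}_l)$. Thus the composition product is ``matrix multiplication'' $E_{ij}\circ E_{kl}=\delta_{j,k}E_{il}$ in the chosen basis on each side, and as $\Phi_N$ is a $\Lambda$-module isomorphism carrying the first basis onto the second it automatically respects composition.

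I do not expect a real obstacle; the proof is essentially bookkeeping, and the two points needing a little care are both already present in the preceding material: (i) counting filtration levels to see which pure tensors $\zeta_i\otimes\tau_j$ survive in $\hh^{(N/N+1)}(\overline{X}\times\overline{X})$, and (ii) noting that the correspondence-product formula of Lemma~\ref{filtr_comp} is an equality on the nose (not merely modulo higher filtration), so that it passes to the quotient with no error terms. One should also record that the Chow bases satisfy the same duality $\deg(\tau^{\tCH}_i\zeta^{\tCH}_j)=\delta_{i,j}$; this follows either from the splitting of $M^{\tCH}(\overline{X})$ together with the Chow analogue of Lemma~\ref{Tate_basis}, or from the $\hh$-duality via the compatibility of $\Psi$ with pushforward to a point in top degree.
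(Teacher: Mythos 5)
Your proposal is correct and matches the paper's argument: the paper's one-line proof rests on exactly the fact you spell out, namely that $\Phi$ sends the residue classes of $\zeta_i^{\hh}\otimes\tau_j^{\hh}$ to $\zeta_i^{\tCH}\otimes\tau_j^{\tCH}$, while the ``matrix multiplication'' identity $(\zeta_i\otimes\tau_j)\circ(\zeta_k\otimes\tau_l)=\delta_{j,k}(\zeta_i\otimes\tau_l)$ on both sides is the computation already used in Lemma~\ref{filtr_comp}. Your extra bookkeeping (which pure tensors survive in $\hh^{(N/N+1)}$, and the duality $\pi_{\tCH}(\tau_i^{\tCH}\zeta_j^{\tCH})=\delta_{i,j}$ via specialization from $\Omega$) is exactly what the paper leaves implicit, so there is no gap.
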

\begin{proof}
This immediately follows from the fact that $\Phi$ maps residue classes of $\zeta_w^{\hh}\otimes\tau_v^{\hh}$ to $\zeta_w^{\tCH}\otimes\tau_v^{\tCH}$.
\end{proof}

\begin{lem}\label{rat}
Let $Y$ be a twisted form of $X$, i.e. $Y_L\cong X_L=\overline{X}$. For every codimension $m$ consider the diagram, where $p\colon\overline{X}\times\overline{X}\to X\times Y$ denotes the projection.
\[
\xymatrix{
\hh^{(m)}_{2N-m}(X\times Y)\ar[rr]^{pr_m\circ p^{\hh}} && \hh^{(m/m+1)}_{2N-m}(\overline{X}\times\overline{X})\\
\CH^m(X\times Y,\Lambda^0)\ar[rr]^{p^{\tCH}} && \CH^m(\overline{X}\times\overline{X},\Lambda^0)\ar[u]_{\Phi^m}
}
\]
Then $\im(\Phi^m\circ p^{\tCH})\subseteq\im pr_m\circ p^{\hh}$.
\end{lem}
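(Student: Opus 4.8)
The strategy is to lift Chow cycles on $X\times Y$ to algebraic cobordism, carry them into $\hh$ via the specialization map $\nu$, and then check that the classes so obtained land in filtration level $m$ and reduce modulo $\hh^{(m+1)}$ to the expected elements. Since $\Phi^m$ and $p^{\tCH}$ are $\Lambda^0$-linear and $\CH^m(X\times Y,\Lambda^0)$ is generated as a $\Lambda^0$-module by the classes $[V]$ of codimension-$m$ subvarieties $V\subseteq X\times Y$, it suffices to produce, for each such $V$, a class $\alpha_V\in\hh^{(m)}_{2N-m}(X\times Y)$ with $pr_m(p^{\hh}(\alpha_V))=\Phi^m(p^{\tCH}([V]))$.

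For each $V$ pick a resolution of singularities $\sigma\colon\tilde V\to X\times Y$ (available since the base field has characteristic $0$): it is projective, $\tilde V$ is smooth of dimension $2N-m$, and $\sigma$ is birational onto its image $V$. Put $\alpha_V=\nu_{X\times Y}(\sigma_{\Omega}(1)\otimes 1)$, which automatically lies in $\hh_{2N-m}(X\times Y)=\hh^m(X\times Y)$. To see that $\alpha_V\in\hh^{(m)}(X\times Y)$, by definition of the induced filtration one must check $p^{\hh}(\alpha_V)\in\hh^{(m)}(\overline{X}\times\overline{X})$. Since $\nu$ commutes with pullbacks, $p^{\hh}(\alpha_V)=\nu_{\overline{X}\times\overline{X}}(p^{\Omega}(\sigma_{\Omega}(1))\otimes 1)$, and $p^{\Omega}(\sigma_{\Omega}(1))=[\tilde V_L\to\overline{X}\times\overline{X}]$ with $\tilde V_L=\tilde V\times_k L$ and the morphism the base change $\sigma_L$. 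This morphism is projective and birational onto its image, whose components have codimension $m$; hence by the identification of the filtration of Definition~\ref{filtration} with the topological filtration (Remark~\ref{topological_filtration}, which rests on the generalized degree formula) the class $[\tilde V_L\to\overline{X}\times\overline{X}]$ already lies in $\Omega^{(m)}(\overline{X}\times\overline{X})$, so $p^{\hh}(\alpha_V)\in\hh^{(m)}(\overline{X}\times\overline{X})$ and $\alpha_V\in\hh^{(m)}_{2N-m}(X\times Y)$.

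It then remains to identify $pr_m(p^{\hh}(\alpha_V))$. Under the canonical specialization $\Omega(\overline{X}\times\overline{X})\to\CH(\overline{X}\times\overline{X})$ the class $[\tilde V_L\to\overline{X}\times\overline{X}]$ maps to the cycle $(\sigma_L)_*[\tilde V_L]=[V_L]$, and this specialization map was built, in Proposition~\ref{Psi_existence} and its proof, so that on $\Omega^{(m)}_{2N-m}(\overline{X}\times\overline{X})$ it factors as $pr_m$ followed by the restriction of $\Phi=\Psi\otimes\Psi$. Transporting this along $\nu_{\overline{X}\times\overline{X}}$, which is an isomorphism respecting the filtrations by Proposition~\ref{structure}(3), gives $pr_m(p^{\hh}(\alpha_V))=\Phi^m([V_L])$; and $[V_L]=p^{\tCH}([V])$ because $p$ is flat. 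This yields $\im(\Phi^m\circ p^{\tCH})\subseteq\im(pr_m\circ p^{\hh})$.

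I expect the delicate point to be the last step: reconciling the abstractly defined isomorphism $\Phi^m$ with honest geometric cycle classes. Concretely one must verify that the lift $\CH^m\to\Omega^{(m/m+1)}$ implicit in $\Phi$ agrees, modulo $\Omega^{(m+1)}$, with $[V_L]\mapsto[\tilde V_L\to\overline{X}\times\overline{X}]$ — this is exactly what the generalized degree formula provides — and that the bases $\{\tau^{\Omega}_i\}$ and $\{\tau^{\tCH}_i\}$ underlying $\Psi$ correspond under the canonical transformation $\Omega\to\CH$, which is how they were chosen in the proofs of Propositions~\ref{structure} and~\ref{Psi_existence}. Everything else is a formal consequence of the structure results already established.
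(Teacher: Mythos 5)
Your argument is correct and follows essentially the same route as the paper: reduce to classes of resolutions $\widetilde{V}\to V\subseteq X\times Y$ of codimension-$m$ integral subschemes, base change to the splitting field, and identify the image modulo $\hh^{(m+1)}$ by expanding $j_{\Omega}(1)$ in the $\tau^{\Omega}$-basis and using that only degree-zero Lazard coefficients survive in $\CH$. The paper packages this last comparison (and the membership in $\hh^{(m)}_{2N-m}$) into Lemma~\ref{subset}, which you in effect re-derive via Remark~\ref{topological_filtration} and the compatibility of the bases under $\Omega\to\CH$.
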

\begin{proof}
Note that $\CH^m(X\times Y,\Lambda^0)$ is generated over $\Lambda^0$ by classes $i_{\tCH}(1)$ where 
$i\colon\widetilde{Z}\to Z\hookrightarrow X\times Y$, $Z$ is a closed integral subscheme of codimension $m$, $\widetilde{Z}\in\Sm_k$ and $\widetilde{Z}\to Z$ is projective birational.
Consider the Cartesian diagram
\[
\xymatrix{
\widetilde{Z}\ar[d]^{i} && \widetilde{Z}_L\ar[d]^{j}\ar[ll]_{q}\\
X\times_k Y && \overline{X}\times_L\overline{X}\ar[ll]_{p}
}
\]
Since this diagram is transverse, then
\[
j_{\hh}\circ q^{\hh}=p^{\hh}\circ i_{\hh} \text{ and } j_{\tCH}\circ q^{\tCH}=p^{\tCH}\circ i_{\tCH}.
\]
By lemma~\ref{subset} we have
$\Phi^m\circ j_{\tCH}(1)=pr_m(j_{\hh}(1))$.
Then
$
\Phi^m\circ p^{\tCH}(i_{\tCH} (1))=\Phi^m\circ
j_{\tCH}(1)=pr_m(j_{\hh}(1))=pr_m(p^{\hh}\circ i_{\hh}(1))\in \im
pr_m\circ p^{\hh}_m$.
\end{proof}

\begin{lem}\label{subset}
Consider a morphism
$j\colon\widetilde{Z}\to\overline{X}\times_L\overline{X}$, where
$\widetilde{Z}$ is a smooth irreducible scheme and $j$ is projective of relative dimension $-m$.
It induces two pushforward maps $j_{\hh}\colon\hh(\widetilde{Z})\to\hh(\overline{X}\times\overline{X})$ and $j_{\tCH}\colon \CH(\widetilde{Z},\Lambda)\to \CH(\overline{X}\times\overline{X},\Lambda)$.

Then $j_{\hh}(1)\in\hh^{(m)}_{2N-m}(\overline{X}\times\overline{X})$
and $\Phi^m(j_{\tCH}(1))=pr_m(j_{\hh}(1))$.
\end{lem}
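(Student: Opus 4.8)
The plan is to reduce both assertions to the case $\hh=\Omega$ via the specialization isomorphism $\nu_{\overline{X}\times\overline{X}}$ of Proposition~\ref{structure}(3), and then to extract everything from the homogeneity of $j_{\Omega}(1)$ expressed in the basis $\{\tau_w^{\Omega}\}$ of $\Omega(\overline{X}\times\overline{X})$ underlying the filtration.

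First I would use that $\nu$ commutes with projective pushforwards, so that $j_{\hh}(1)=\nu_{\overline{X}\times\overline{X}}(j_{\Omega}(1)\otimes 1)$; hence it suffices to locate $j_{\Omega}(1)$ in the filtration of $\Omega(\overline{X}\times\overline{X})$. Writing $j_{\Omega}(1)=\sum_w b_w\tau_w^{\Omega}$ with $\tau_w^{\Omega}\in\Omega^{\alpha_w}(\overline{X}\times\overline{X})$ and $b_w\in\mathbb{L}$, and using that $j$ has relative dimension $-m$ so that $j_{\Omega}(1)\in\Omega^{m}(\overline{X}\times\overline{X})$, homogeneity forces $\deg b_w+\alpha_w=m$ for every nonzero summand; since $\mathbb{L}$ is concentrated in non-positive degrees this gives $\alpha_w\geqslant m$, whence $j_{\Omega}(1)\in\bigoplus_{\alpha_w\geqslant m}\mathbb{L}\tau_w^{\Omega}=\Omega^{(m)}(\overline{X}\times\overline{X})$. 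Applying $\nu$ then gives $j_{\hh}(1)\in\hh^{(m)}_{2N-m}(\overline{X}\times\overline{X})$, the first assertion; this computation also records the refinement that every $b_w$ with $\alpha_w=m$ already lies in $\mathbb{L}^0=\mathbb{Z}$.

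For the identity $\Phi^m(j_{\tCH}(1))=pr_m(j_{\hh}(1))$ I would start from the congruence $j_{\Omega}(1)\equiv\sum_{\alpha_w=m}b_w\tau_w^{\Omega}\pmod{\Omega^{(m+1)}(\overline{X}\times\overline{X})}$, $b_w\in\mathbb{Z}$, and evaluate both sides. On the Chow side, $j_{\tCH}(1)$ is the image of $j_{\Omega}(1)$ under the canonical map $\Omega\to\CH$, which kills every $\tau_w^{\Omega}$ with $\alpha_w>m$ (its coefficient lying in $\mathbb{L}^{<0}$) and sends $\tau_w^{\Omega}\mapsto\tau_w^{\tCH}$ when $\alpha_w=m$; thus $j_{\tCH}(1)=\sum_{\alpha_w=m}b_w\tau_w^{\tCH}$. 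On the $\hh$ side, $j_{\hh}(1)=\sum_w(b_w\otimes 1)\tau_w^{\hh}$, and modulo $\hh^{(m+1)}(\overline{X}\times\overline{X})$ only the terms with $\alpha_w=m$ survive, each contributing $b_w(\tau_w^{\hh}+\hh^{(m+1)}(\overline{X}\times\overline{X}))$ with $b_w\in\mathbb{Z}$. Since $\Phi^m$, which by construction carries $\tau_w^{\tCH}$ to the residue class of $\tau_w^{\hh}$ (see Proposition~\ref{Psi_existence} and the definition $\Phi=\Psi\otimes\Psi$), applying it to $j_{\tCH}(1)$ reproduces precisely $pr_m(j_{\hh}(1))$.

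The main point to watch is this grading bookkeeping across $\nu$: passing from $\Omega$ to $\hh$ through $\otimes_{\mathbb{L}}\Lambda$ need not annihilate the lower-order coefficients (those in $\mathbb{L}^{<0}$), but the corresponding basis terms lie in $\hh^{(m+1)}(\overline{X}\times\overline{X})$ and hence vanish under $pr_m$, whereas the leading coefficients lie in $\mathbb{L}^0=\mathbb{Z}$ and are unchanged by the base change; this is precisely why the $\Omega$-computation transports faithfully to $\hh$. Everything else is the definitions of $\Phi$ and $\Psi$ together with Proposition~\ref{structure}.
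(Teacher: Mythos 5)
Your proof is correct and follows essentially the same route as the paper: reduce to $\Omega$ via $\nu$ (the paper writes $j_{\hh}(1)=j_{\Omega}(1)\otimes_{\mathbb{L}}1_{\Lambda}$), expand $j_{\Omega}(1)$ in the $\tau$-basis, use homogeneity plus the non-positivity of degrees in $\mathbb{L}$ to land in the $m$-th filtration step with integral leading coefficients, and then match both sides modulo the $(m+1)$-st step using that $\Phi^m$ sends $\tau^{\tCH}_{i_1}\otimes\tau^{\tCH}_{i_2}$ to the residue of $\tau^{\hh}_{i_1}\otimes\tau^{\hh}_{i_2}$. Your closing remark about the lower-order terms surviving the base change $\otimes_{\mathbb{L}}\Lambda$ but dying under $pr_m$ is exactly the point the paper's computation relies on.
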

\begin{proof}
Observe that 
\[
j_{\hh}(1)=j_{\Omega}(1)\otimes_{\mathbb{L}} 1_{\Lambda} \text{  and   } j_{\tCH}(1)=j_{\Omega}(1)\otimes_{\mathbb{L}} 1_{\mathbb{Z}}.
\] 
Expanding in the basis we obtain
\[
j_{\Omega}(1)=\sum_{i_1,i_2}r_{i_1,i_2}\tau_{i_1}^{\Omega}\otimes\tau_{i_2}^{\Omega}\text{ for some }r_{i_1,i_2}\in\mathbb{L}.\eqno{(*)}
\]
Since $j_{\Omega}(1)$ is homogeneous of degree $m$, we have \[r_{i_1,i_2}\in\mathbb{L}^{m-\alpha_{i_1}-\alpha_{i_2}}.\eqno{(**)}\]
Then for every nonzero $r_{i_1,i_2}$ we have
$\alpha_{i_1}+\alpha_{i_2}\geqslant m$. So each
$\tau_{i_1}^{\Omega}\otimes\tau_{i_2}^{\Omega}\in\Omega^{(m)}(\overline{X}\times\overline{X})$
and, thus,
$j_{\Omega}(1)\in\Omega^{(m)}_{2N-m}(\overline{X}\times\overline{X})$.
Taking $(*)$ modulo $\Omega^{(m+1)}(\overline{X}\times\overline{X})$ we obtain
\[
j_{\Omega}(1)+\Omega^{(m+1)}(\overline{X}\times\overline{X})=\sum_{\alpha_{i_1}+\alpha_{i_2}=m}r_{i_1,i_2}\tau_{i_1}^{\Omega}\otimes\tau_{i_2}^{\Omega}+\Omega^{(m+1)}(\overline{X}\times\overline{X}).
\]
If $\alpha_{i_1}+\alpha_{i_2}=m$ then $r_{i_1,i_2}\in\mathbb{L}^0=\mathbb{Z}$ by $(**)$.
Thus taking $j_{\hh}(1)=j_{\Omega}(1)\otimes_{\mathbb{L}}1_{\Lambda}$ and $j_{\tCH}(1)=j_{\Omega}(1)\otimes_{\mathbb{L}}1_{\mathbb{Z}}$ we get
\[
pr_m(j_{\hh}(1))=pr_m(\sum_{\alpha_{i_1}+\alpha_{i_2}=m}r_{i_1,i_2}\tau_{i_1}^{\hh}\otimes\tau_{i_2}^{\hh})
\]
and 
\[
j_{\tCH}(1)=j_{\Omega}(1)\otimes_{\mathbb{L}} 1_{\mathbb{Z}}\otimes_{\mathbb{Z}}1_{\Lambda}=\sum_{\alpha_{i_1}+\alpha_{i_2}=m}r_{i_1,i_2}\tau_{i_1}^{\tCH}\otimes\tau_{i_2}^{\tCH}.
\]
Then $\Phi^m(j_{\tCH}(1))=pr_m(j_{\hh}(1))$, since $\Phi^m(\tau^{\tCH}_{i_1}\otimes\tau^{\tCH}_{i_2})=pr_m(\tau^{\hh}_{i_1}\otimes\tau^{\hh}_{i_2})$.
\end{proof}

\begin{lem}\label{nilpotent_kernel}
The kernel of the composition homomorphism
\[
pr_N\circ p^{\hh}\colon\hh^{(N)}_{N}(X\times X)\to\hh^{(N)}_{N}(\overline{X}\times\overline{X})\to \hh^{(N/N+1)}_N(\overline{X}\times\overline{X})
\]
consists of nilpotents.
\end{lem}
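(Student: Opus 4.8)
The plan is to compose the two maps whose kernels have already been analyzed: the $\hh$-Rost nilpotence map from Lemma~\ref{Rost} and the "associated graded" projection $pr_N$. More precisely, the composition $pr_N \circ p^{\hh}$ factors through $p^{\hh}\colon \hh_N^{(N)}(X\times X) \to \hh_N^{(N)}(\overline X\times\overline X)$ followed by the projection onto the associated graded piece $\hh^{(N/N+1)}_N(\overline X\times\overline X)$. By Lemma~\ref{Rost} the kernel of $p^{\hh}$ on $\mathrm{End}(M^{\hh}(X)) = \hh_N(X\times X)$ consists of nilpotents, so it suffices to show that the kernel of $pr_N$ restricted to the image $p^{\hh}(\hh_N^{(N)}(X\times X))$ — or even on all of $\hh_N^{(N)}(\overline X\times\overline X)$ — consists of nilpotents with respect to the composition product. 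Then an element of $\ker(pr_N\circ p^{\hh})$ has image under $p^{\hh}$ that is nilpotent, so some power of it lies in $\ker p^{\hh}$, which is again nilpotent; hence the element itself is nilpotent.

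First I would observe, using Lemma~\ref{filtr_comp} and the Remark following it, that $\hh^{(N)}_N(\overline X\times\overline X)$ is a ring under composition with two-sided ideal $\hh^{(N+1)}(\overline X\times\overline X)\cap\hh_N(\overline X\times\overline X)$, and $pr_N$ is precisely the quotient by this ideal. Thus $\ker pr_N$ (intersected with degree-$N$ part) is this ideal, and I must show every element of it is nilpotent under composition. The key point is that the filtration on $\hh(\overline X\times\overline X)$ is \emph{finite}: since $\dim(\overline X\times\overline X) = 2N$, we have $\hh^{(2N+1)}(\overline X\times\overline X) = 0$. By Lemma~\ref{filtr_comp}, composing an element of $\hh^{(N+1)}$ with itself $k$ times lands in $\hh^{(N+k)}$ (in the appropriate graded sense: $\hh^{(N+i)}\circ\hh^{(N+j)}\subseteq\hh^{(N+i+j)}$, so $k$-fold self-composition of something in $\hh^{(N+1)}_N$ lies in $\hh^{(N+k)}_N$). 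Taking $k = N+1$ gives $\hh^{(2N+1)}_N(\overline X\times\overline X) = 0$. Hence every element of the ideal $\ker pr_N$ is nilpotent of order at most $N+1$.

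The remaining bookkeeping is to make the factorization and the two nilpotence statements fit together cleanly: given $\varphi \in \hh^{(N)}_N(X\times X)$ with $pr_N(p^{\hh}(\varphi)) = 0$, we have $p^{\hh}(\varphi) \in \ker pr_N$, so $p^{\hh}(\varphi)^{\circ (N+1)} = 0$; but $p^{\hh}$ is a ring homomorphism for the composition product, so $p^{\hh}(\varphi^{\circ(N+1)}) = 0$, i.e. $\varphi^{\circ(N+1)} \in \ker p^{\hh}$, which by Lemma~\ref{Rost} consists of nilpotents; therefore $\varphi$ is nilpotent. One should double-check that $p^{\hh}$ indeed respects the composition product of correspondences — this is standard since $p$ comes from base change and pullback is compatible with the correspondence product — and that the source $\hh_N(X\times X) = \mathrm{End}(M^{\hh}(X))$ is the same object appearing in Lemma~\ref{Rost}.

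I do not expect a serious obstacle here; the statement is essentially a formal consequence of \emph{(i)} finiteness of the filtration, \emph{(ii)} the multiplicativity of the composition product with respect to the filtration (Lemma~\ref{filtr_comp}), and \emph{(iii)} $\hh$-Rost nilpotence (Lemma~\ref{Rost}). The one place requiring a little care is the precise indexing of the filtration shifted by $N$ and the claim that self-composition strictly raises the filtration level on the degree-$N$ homogeneous part — but this is exactly what the Remark after Lemma~\ref{filtr_comp} records, so it should go through without difficulty.
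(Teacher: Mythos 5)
Your proof is correct and follows exactly the paper's argument: the paper likewise deduces the lemma from Rost nilpotence (Lemma~\ref{Rost}) together with the nilpotence of $\hh^{(N+1)}(\overline{X}\times\overline{X})$ under composition, which follows from Lemma~\ref{filtr_comp} and the finiteness of the filtration. You have merely written out the bookkeeping that the paper's one-line proof leaves implicit.
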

\begin{proof}
This follows from Rost nilpotence and the fact that $\hh^{(N+1)}(\overline{X}\times\overline{X})$ is nilpotent by Lemma~\ref{filtr_comp}.
\end{proof}

\begin{lem}\label{isomorphic_objects}
Let $\mathcal{C}$ be an additive category, $A,B\in Ob(\mathcal{C})$. Let $f\in Hom_{\mathcal{C}}(A,B)$ and $g\in Hom_{\mathcal{C}}(B,A)$such that $f\circ g-id_B$ is nilpotent in the ring $End_{\mathcal{C}}(B)$ and $g\circ f-id_A$ is nilpotent in the ring $End_{\mathcal{C}}(A).$ Then $A$ is isomorphic to $B$.
\end{lem}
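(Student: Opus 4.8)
The plan is to prove this purely formally inside the additive category $\mathcal{C}$, with no reference to motives or cohomology — it is an abstract ``nilpotent lifting'' statement. First I would set $u = f\circ g \in End_{\mathcal{C}}(B)$ and observe that by hypothesis $u - id_B = n$ is nilpotent, say $n^{k}=0$. Then $u = id_B + n$ is invertible in $End_{\mathcal{C}}(B)$ with explicit inverse $u^{-1} = \sum_{i=0}^{k-1} (-1)^{i} n^{i}$ (a finite sum, so it makes sense in any ring). Symmetrically, $v = g\circ f \in End_{\mathcal{C}}(A)$ is invertible, $v^{-1} = \sum_{i=0}^{k'-1}(-1)^{i}(v-id_A)^{i}$.

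Next I would produce a genuine inverse to $f$. The natural candidate is $g' = g\circ u^{-1} = g\circ (f\circ g)^{-1}\colon B\to A$. Then $f\circ g' = f\circ g\circ (f\circ g)^{-1} = id_B$ by construction, so $g'$ is a right inverse of $f$. For the other composite, I compute $g'\circ f = g\circ (f\circ g)^{-1}\circ f$; the key manipulation is to move $(f\circ g)^{-1}$ past $f$ on the left. Since $(f\circ g)\circ f = f\circ (g\circ f)$, taking inverses formally (both sides are units: $g\circ f$ is a unit in $End(A)$ by hypothesis and $f\circ g$ a unit in $End(B)$) gives $f\circ (g\circ f)^{-1} = (f\circ g)^{-1}\circ f$, hence $g'\circ f = g\circ f\circ (g\circ f)^{-1} = id_A$. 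Actually it is cleanest to just verify directly that $f\circ (g\circ f)^{-1}$ is a two-sided inverse of... no — the honest route is: set $g'' = (g\circ f)^{-1}\circ g\colon B\to A$, check $g''\circ f = id_A$ directly, then $g'$ is a right inverse and $g''$ a left inverse of $f$, so they coincide and $f$ is an isomorphism with inverse $g' = g''$.

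So the concrete steps are: (1) expand $(f\circ g)^{-1}$ and $(g\circ f)^{-1}$ as finite geometric series in the respective nilpotents; (2) define $g' = g\circ(f\circ g)^{-1}$ and $g''=(g\circ f)^{-1}\circ g$; (3) check $f\circ g' = id_B$ and $g''\circ f = id_A$ by cancelling the series against their defining products; (4) conclude $g' = g''\circ (f\circ g') = g''$, so $f$ is invertible, whence $A\cong B$. The only mildly delicate point — and the one I expect to be the ``main obstacle,'' though it is really just bookkeeping — is justifying the identity $f\circ(g\circ f)^{-1} = (f\circ g)^{-1}\circ f$, i.e. that $f$ intertwines the two units; this follows by multiplying the evident relation $f\circ(g\circ f) = (f\circ g)\circ f$ on the left by $(f\circ g)^{-1}$ and on the right by $(g\circ f)^{-1}$. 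Everything lives in two ordinary (noncommutative) rings $End_{\mathcal{C}}(A)$ and $End_{\mathcal{C}}(B)$ together with the bimodule $Hom_{\mathcal{C}}(A,B)$, so no categorical subtlety beyond associativity of composition is needed, and additivity of $\mathcal{C}$ is used only to make sense of the finite sums.
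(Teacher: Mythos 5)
Your proposal is correct and is essentially the paper's own argument in mirror image: the paper inverts $g\circ f$ and $f\circ g$ by the same finite geometric series and corrects $f$ to $f_1=f\circ(g\circ f)^{-1}=(f\circ g)^{-1}\circ f$ (the binomial manipulations there just verify this intertwining), exhibiting $g$ as an isomorphism, whereas you correct $g$ to $g'=g\circ(f\circ g)^{-1}=(g\circ f)^{-1}\circ g$ and exhibit $f$ as an isomorphism. Your left-inverse/right-inverse coincidence step is a clean and valid substitute for the paper's explicit expansion, so no gap remains.
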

\begin{proof}
Denote $\alpha=id_A-gf$ and $\beta=id_B-fg$. Take natural $n$ such that $\alpha^{n+1}=0$ and $\beta^{n+1}=0.$
Then $gf=id_A-\alpha$ is invertible and $(gf)^{-1}=id_A+\alpha+\ldots+\alpha^n$. Analogously $(fg)^{-1}=id_B+\beta+\ldots+\beta^n$.
So we have 
\[
gf(id_A+(id_A-gf)+\ldots +(id_A-gf)^n)=id_A
\]
Since $(id_A-gf)^n=\sum_{i=0}^n(-1)^i{n \choose i}(gf)^i$, we have
\[
g\sum_{m=0}^n\left(\sum_{i=0}^m(-1)^i{m \choose i}(fg)^{i}f\right)=id_A\eqno{(*)}
\]
and 
\[
fg(id_B+(id_B-fg)+\ldots +(id_B-fg)^n)=id_B
\]
implies
\[
\sum_{m=0}^n\left(\sum_{i=0}^m(-1)^i{m \choose i}f(gf)^i\right)g=id_B.\eqno{(**)}
\]
Then take 
\[
f_1=\sum_{m=0}^n\left(\sum_{i=0}^m(-1)^i{m \choose i}(fg)^if\right)=\sum_{m=0}^n\left(\sum_{i=0}^m(-1)^i{m \choose i}f(gf)^i\right).
\]
Then $(*)$ implies $gf_1=id_A$ and $(**)$ implies $f_1g=id_B.$
So $f_1$ and $g$ establish inverse isomorphisms between $A$ and $B$.
\end{proof}

\begin{cor}\label{two_projectors_cor}
Suppose $p_1$ and $p_2$ are two idempotents in $End(M^{\hh}(\overline{X}))$ such that $p_1-p_2$ is nilpotent.
Then the motives $(\overline{X},p_1)$ and $(\overline{X},p_2)$ are isomorphic.
\end{cor}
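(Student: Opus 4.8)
The plan is to deduce this from Lemma~\ref{isomorphic_objects}, applied to the objects $A=(\overline X,p_1)$ and $B=(\overline X,p_2)$ of $\Ms_{\hh}$. Write $R=End_{\Ms_{\hh}}(M^{\hh}(\overline X))$; this is a unital associative ring (with unit $\Delta_{\hh}(1)$), and by the construction of the idempotent completion one has $End(A)=p_1Rp_1$, $End(B)=p_2Rp_2$, $Hom(A,B)=p_2Rp_1$ and $Hom(B,A)=p_1Rp_2$. I take $f=p_2p_1\in Hom(A,B)$ and $g=p_1p_2\in Hom(B,A)$; these are legitimate morphisms since $p_2(p_2p_1)p_1=p_2p_1$ and $p_1(p_1p_2)p_2=p_1p_2$. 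The two composites of $f$ and $g$ are represented by $p_1p_2p_1\in p_1Rp_1$ and $p_2p_1p_2\in p_2Rp_2$, one being $g\circ f$ and the other $f\circ g$, so to apply Lemma~\ref{isomorphic_objects} it suffices to show that $p_1p_2p_1-p_1$ and $p_2p_1p_2-p_2$ are nilpotent.

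Set $d=p_1-p_2$, which is nilpotent by hypothesis, say $d^{\,n}=0$. The key input is the ring identity
\[
p_1d+dp_1=d+d^{2},
\]
valid for any two idempotents with difference $d$: both sides expand to $2p_1-p_1p_2-p_2p_1$, using $p_i^{2}=p_i$ and $d^{2}=p_1-p_1p_2-p_2p_1+p_2$. Multiplying this identity by $p_1$ on the left and on the right and cancelling $p_1d$, resp.\ $dp_1$, gives $p_1dp_1=p_1d^{2}=d^{2}p_1$; in particular $p_1$ commutes with $d^{2}$. Since $p_2=p_1-d$ we have $p_1p_2p_1=p_1-p_1dp_1$, hence
\[
p_1p_2p_1-p_1=-p_1dp_1=-p_1d^{2},
\]
and because $p_1$ commutes with $d^{2}$ we get $(p_1d^{2})^{m}=p_1d^{2m}=0$ once $2m\ge n$. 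Thus $p_1p_2p_1-p_1$ is nilpotent; running the same argument with $p_1$ and $p_2$ interchanged (note $(p_2-p_1)^{2}=d^{2}$, so $p_2$ commutes with $d^{2}$ as well) shows that $p_2p_1p_2-p_2=-p_2d^{2}$ is nilpotent too.

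Consequently $g\circ f-id_A$ and $f\circ g-id_B$ are nilpotent in $End(A)$ and $End(B)$ respectively, and Lemma~\ref{isomorphic_objects} yields mutually inverse isomorphisms between $A=(\overline X,p_1)$ and $B=(\overline X,p_2)$. I expect no serious obstacle here: the whole content is the elementary observation that an idempotent commutes with the square of its difference with another idempotent, which is exactly what turns $-p_1d^{2}$ and $-p_2d^{2}$ into nilpotents; the rest is formal bookkeeping inside the idempotent completion, and in fact the argument uses nothing about $\overline X$ beyond that $M^{\hh}(\overline X)$ is an object of an additive, idempotent-complete category.
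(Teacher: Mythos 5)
Your argument is correct, and it follows the same skeleton as the paper: the same choice of morphisms $f=p_2\circ p_1$ and $g=p_1\circ p_2$, and the same reduction to Lemma~\ref{isomorphic_objects}. The difference is in how the nilpotence of $p_1p_2p_1-p_1$ and $p_2p_1p_2-p_2$ is verified. The paper splits $M^{\hh}(\overline X)=\ker p_2\oplus\im p_2$, writes $p_1-p_2$ as a matrix whose components all land in $\im p_2$, and deduces the identity $(p_2(p_1-p_2)p_2)^m=p_2(p_1-p_2)^mp_2$; you instead use the elementary ring identity $p_1d+dp_1=d+d^2$ (with $d=p_1-p_2$) to get $p_1dp_1=p_1d^2=d^2p_1$, hence $p_1p_2p_1-p_1=-p_1d^2$ with $p_1$ commuting with $d^2$, and symmetrically for $p_2$. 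Your route is not only more elementary but more robust: the paper's matrix form presupposes $\im(p_1-p_2)\subseteq\im p_2$, which does not follow from the stated observation $\ker p_2\cap\im p_1=0$, and indeed the intermediate identity can fail, e.g.\ for the $3\times 3$ idempotents $p_1=\left(\begin{smallmatrix}1&1&0\\0&0&0\\0&0&1\end{smallmatrix}\right)$, $p_2=\left(\begin{smallmatrix}1&0&0\\0&0&-1\\0&0&1\end{smallmatrix}\right)$, where $d$ is nilpotent with $d^2\neq 0$ and $(p_2dp_2)^2=0\neq p_2d^2p_2$; the conclusion that $p_2(p_1-p_2)p_2$ is nilpotent is nevertheless true, and it is exactly what your formula $p_2dp_2=-p_2d^2$ together with the commutation of $p_2$ with $d^2$ delivers unconditionally. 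So you prove the same statement by the same overall strategy, with a cleaner and airtight verification of the key nilpotence step, using nothing beyond additivity and idempotent-completeness of $\Ms_{\hh}$.
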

\begin{proof}
Take \[
f=p_2\circ p_1\in
Hom_{\Ms_{\hh}}((\overline{X},p_1),(\overline{X},p_2))\text{ and }g=p_1\circ p_2\in Hom_{\Ms_{\hh}}((\overline{X},p_2),(\overline{X},p_1)).\] 
Let us check that $f\circ
g-id_{(X,p_2)}=p_2p_1p_2-p_2=p_2(p_1-p_2)p_2$ is nilpotent. 

It is sufficient to check that $(p_2(p_1-p_2)p_2)^m=p_2(p_1-p_2)^mp_2$ for any $m$.
Note that if $x\in\ker p_2\cap \im p_1$ then $(p_1-p_2)(x)=x$. Since $p_1-p_2$ is nilpotent, $x=0$. Thus, $\ker p_2\cap\im p_1=0$.
Since $p_2$ is idempotent, $\im p_2\cap\ker p_2=0$. Then endomorphism $p_1-p_2$ of $M(\overline{X})=\ker p_2\oplus\im p_2$ can be represented as the matrix
\[
p_1-p_2=\begin{pmatrix}
E_1 & E_2\\
0 & 0
\end{pmatrix}
\]
where $E_1$ is a homomorphism from $\im p_2$ to $\im p_2$ and $E_2$ is
a homomorphism from $\ker p_2$ to $\im p_2$.
We have
\[p_2(p_1-p_2)^mp_2=p_2\circ\begin{pmatrix}
E_1^m & E_1^{m-1}E_2\\
0 & 0
\end{pmatrix}\circ p_2=\begin{pmatrix}
E_1^m & 0\\
0 & 0
\end{pmatrix}=(p_2(p_1-p_2)p_2)^m.
\]
Then $f\circ g-id_{(X,p_2)}=p_2(p_1-p_2)p_2$ is
nilpotent. Symmetrically, $g\circ f-id_{(X,p_1)}$ is nilpotent. 
So $(X,p_1)$ and $(X,p_2)$ are isomorphic by~Lemma~\ref{isomorphic_objects}.
\end{proof}

We are now ready to prove theorems A, B and C of the introduction:

\begin{mthmA}\label{main_decomp}
Suppose $X$ is generically cellular.
Assume that there is a decomposition of Chow motive with coefficients in $\Lambda^0$
\[M^{\tCH}(X,\Lambda^0)=\bigoplus_{i=0}^n \Rs(\alpha_i)\eqno{(*)}\] such that over the splitting field $L$ the motive $\Rs$ equals to the sum of twisted Tate motives:
$\overline{\Rs}=\bigoplus_{j=0}^m\Lambda^0(\beta_j)$.

Then there is a $\hh$-motive $\Rs_{\hh}$ such that 
\[
M^{\hh}(X)=\bigoplus_{i=0}^{n} \Rs_{\hh}(\alpha_i)
\]
such that over the splitting field $\Rs_{\hh}$ splits into the $\hh$-Tate motives $\overline{\Rs_{\hh}}=\bigoplus_{j=0}^m\Lambda(\beta_j)$.
\end{mthmA}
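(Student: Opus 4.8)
The plan is to read the Chow decomposition $(*)$ as a complete system of pairwise orthogonal idempotents, to lift that system to one in $End(M^{\hh}(X))$ that is still \emph{defined over the base field} $k$, to split off the corresponding $\hh$-motivic summands, to identify them as twists of a single $\hh$-motive $\Rs_{\hh}$, and finally to verify that $\Rs_{\hh}$ becomes a sum of $\hh$-Tate motives over $L$. The Chow side (and, at the last step, the given Tate refinement of $\overline{\Rs}$) is the blueprint, and Lemma~\ref{rat} is the device producing rational, i.e. $k$-defined, lifts. Concretely, $(*)$ is the same as pairwise orthogonal idempotents $\rho_0,\dots,\rho_n\in\CH^N(X\times X,\Lambda^0)=End(M^{\tCH}(X,\Lambda^0))$ with $\sum_i\rho_i=\Delta^{\tCH}_X$ and $(X,\rho_i)\cong\Rs(\alpha_i)$; since $\Rs(\alpha_i)=\Rs(\alpha_0)\otimes\Lambda^0(\alpha_i-\alpha_0)$, there are also mutually inverse twist isomorphisms between these Chow motives, realized by correspondences $a_i\in\CH^{N+\alpha_0-\alpha_i}(X\times X,\Lambda^0)$, $b_i\in\CH^{N+\alpha_i-\alpha_0}(X\times X,\Lambda^0)$ with $a_i=\rho_i a_i=a_i\rho_0$, and likewise for $b_i$.

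By Lemma~\ref{filtr_comp} and the remark after it (applied over $k$ via $p^{\hh}$), the induced filtration makes $End(M^{\hh}(X))=\hh^{(N)}_N(X\times X)$ a ring filtered for composition, and the leading-term map $pr_N\circ p^{\hh}\colon End(M^{\hh}(X))\to \hh^{(N/N+1)}_N(\overline{X}\times\overline{X})\cong End(M^{\tCH}(\overline{X},\Lambda^0))$ is a ring homomorphism whose kernel consists of nilpotents (Lemma~\ref{nilpotent_kernel}, resting on Rost nilpotence, Lemma~\ref{Rost}, and on the nilpotence of the higher filtration). By Lemma~\ref{rat} with $Y=X$, each class $\Phi^N(p^{\tCH}(\rho_i))$ lies in the image of $pr_N\circ p^{\hh}$, so $\{p^{\tCH}(\rho_i)\}$ is a complete orthogonal idempotent system in the quotient of $End(M^{\hh}(X))$ by a nil ideal; lifting such a system one summand at a time yields pairwise orthogonal $e_0,\dots,e_n\in End(M^{\hh}(X))$ with $\sum_i e_i=\mathrm{id}$ and $pr_N(p^{\hh}(e_i))=\Phi^N(p^{\tCH}(\rho_i))$. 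As every idempotent splits in $\Ms_{\hh}$, we get $M^{\hh}(X)=\bigoplus_{i=0}^n(X,e_i)$.

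Set $\Rs_{\hh}=(X,e_0)(-\alpha_0)$. To see $(X,e_i)\cong\Rs_{\hh}(\alpha_i)=(X,e_0)(\alpha_i-\alpha_0)$, apply Lemma~\ref{rat} in codimensions $N+\alpha_0-\alpha_i$ and $N+\alpha_i-\alpha_0$ to lift $a_i,b_i$ to $\hh$-correspondences with leading terms $\Phi(a_i),\Phi(b_i)$, and compose with $e_0,e_i$ to obtain morphisms $f\colon(X,e_0)(\alpha_i-\alpha_0)\to(X,e_i)$ and $g$ in the opposite direction. Because the isomorphism $\Phi$ is compatible with the composition product in all degrees — the unlabeled lemma preceding Lemma~\ref{rat} extends verbatim, being forced by $(\zeta_m\otimes\tau_n)\circ(\zeta_{m'}\otimes\tau_{n'})=\delta_{n,m'}\,\zeta_m\otimes\tau_{n'}$ — the elements $f\circ g-\mathrm{id}$ and $g\circ f-\mathrm{id}$ lie in the kernel of $pr_N\circ p^{\hh}$ restricted to the relevant corner ring of $End(M^{\hh}(X))$, hence are nilpotent by Lemma~\ref{nilpotent_kernel}, and Lemma~\ref{isomorphic_objects} gives the isomorphism; thus $M^{\hh}(X)=\bigoplus_{i=0}^n\Rs_{\hh}(\alpha_i)$. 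For the splitting over $L$: there $\overline{X}$ is cellular, so the composition filtration on $End(M^{\hh}(\overline{X}))$ is finite with associated graded $End(M^{\tCH}(\overline{X},\Lambda^0))$ and nilpotent kernel. The given splitting $\overline{\Rs}(\alpha_0)=\bigoplus_{j=0}^m\Lambda^0(\beta_j+\alpha_0)$ refines $p^{\tCH}(\rho_0)$ into rank-one orthogonal Chow idempotents $q_j=\mu_j\lambda_j$ with $\lambda_j\mu_j=\mathrm{id}$; lifting this system inside the corner ring $End((\overline{X},p^{\hh}(e_0)))$ modulo its nilpotent kernel gives orthogonal $\widetilde q_j$ summing to $\mathrm{id}_{(\overline{X},p^{\hh}(e_0))}$, each reducing to $q_j$. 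Finally, lifting the pair $\lambda_j,\mu_j$ along the surjection $\hh^{(\bullet/\bullet+1)}(\overline{X})\cong\CH^{\bullet}(\overline{X},\Lambda)$ of Proposition~\ref{Psi_existence} produces $\widetilde\lambda_j,\widetilde\mu_j$ with $\widetilde\lambda_j\widetilde\mu_j=\mathrm{id}\in End(\Lambda(\beta_j+\alpha_0))$ (a point has no higher filtration), so $\widetilde\mu_j\widetilde\lambda_j$ is an idempotent reducing to $q_j$ with $(\overline{X},\widetilde\mu_j\widetilde\lambda_j)\cong\Lambda(\beta_j+\alpha_0)$; by Corollary~\ref{two_projectors_cor} applied to the two idempotents $\widetilde q_j$ and $\widetilde\mu_j\widetilde\lambda_j$ we get $(\overline{X},\widetilde q_j)\cong\Lambda(\beta_j+\alpha_0)$, whence $(\overline{X},p^{\hh}(e_0))\cong\bigoplus_{j=0}^m\Lambda(\beta_j+\alpha_0)$ and $\overline{\Rs_{\hh}}=\bigoplus_{j=0}^m\Lambda(\beta_j)$.

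The crux is \emph{rationality}: abstract lifting of orthogonal idempotents modulo a nil ideal is routine, but one must have the lifts — and the twist isomorphisms — represented by $\hh$-correspondences already over $k$, which is exactly what Lemma~\ref{rat} secures by pinning down leading terms of classes of rational cycles; the real work is in interlacing this with Rost nilpotence and with the nilpotence of the higher filtration. A second subtlety is the last step: a lifted idempotent agrees with a sum of Tate projectors only modulo the filtration, so to see that $\Rs_{\hh}$ honestly splits into $\hh$-Tate motives one has to lift the individual rank-one projection/inclusion pairs and invoke Corollary~\ref{two_projectors_cor} once more.
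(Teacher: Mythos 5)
Your proposal is correct and follows essentially the same route as the paper: read the Chow decomposition as idempotents plus twist isomorphisms, use Lemma~\ref{rat} to realize them as leading terms of $k$-rational $\hh$-correspondences, lift through the nilpotent kernel of $pr_N\circ p^{\hh}$ (where the paper invokes \cite[Prop.~27.4]{AF} and \cite[Lem.~2.5]{PSZ}, you lift the idempotents and the isomorphisms by hand via Lemma~\ref{isomorphic_objects}), and over $L$ compare the lifted projector with an explicit sum of lifted Tate/K\"unneth projectors via Corollary~\ref{two_projectors_cor}. The remaining differences (lifting rank-one pairs one at a time instead of the full K\"unneth sum, and spelling out that the graded multiplicativity of $\Phi$ holds in all degrees) are cosmetic repackagings of the paper's argument.
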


\begin{proof}
We may assume that $\alpha_0=0$ in ($*$). Then each summand $\Rs(\alpha_i)$ equals to $(X,p_i)$ for some idempotent $p_i$
and there are mutually inverse isomorphisms $\phi_i$ and $\psi_i$ of degree $\alpha_i$ between $(X,p_0)$ and $(X,p_i)$.
So we have
\begin{itemize}
\item idempotents $p_{i}\in\CH^N(X\times X),$ $\sum p_i=\Delta^{X}_{\hh}(1)$
\item isomorphisms $\phi_{i}\in p_{0}\circ\CH^{N+\alpha_i}(X\times X)\circ p_{i}$ and 
$\psi_{i}\in p_{i}\circ\CH^{N-\alpha_i}(X\times X)\circ p_{0}$ 
\item such that $\phi_{i}\circ\psi_{i}=p_{0}$ and $\psi_{i}\circ\phi_{i}=p_{i}$
\end{itemize}
Consider the diagram of Lemma~\ref{rat}
\[
\xymatrix{
\hh^{(m)}_{2N-m}(X\times Y)\ar[rr]^{pr_m\circ p^{\hh}} && \hh^{(m/m+1)}_{2N-m}(\overline{X}\times\overline{X})\\
\CH^m(X\times Y,\Lambda^0)\ar[rr]^{p^{\tCH}} && \CH^m(\overline{X}\times\overline{X},\Lambda^0)\ar[u]_{\Phi^m}
}.
\]

By~\ref{rat} the elements $\Phi^N\circ p^{\tCH}(p_{i})$ and $\Phi^{N+\alpha_i}\circ p^{\tCH}(\phi_{i})$ and $\Phi^{N-\alpha_i}\circ p^{\tCH}(\psi_{i})$ lie in $\im pr_{N}\circ p^{\hh}$, $\im pr_{N-\alpha_i}\circ p^{\hh}$ and $\im pr_{N+\alpha_i}\circ p^{\hh}$ respectively.

By~Lemma~\ref{nilpotent_kernel} the kernel of $pr_N\circ p^{\hh}\colon\hh^{(N)}_N(X\times X)\to\hh^{(N/N+1)}_N(\overline{X}\times\overline{X})$ is nilpotent. Then by~\cite[Prop.~27.4]{AF} there is a decomposition $r_{i}$ such that $pr_N\circ p^{\hh}(r_{i})=p_{i}$. 

Let us construct the isomorphisms between $r_{i}$ and $r_{0}$.
Let $\phi'_{i}$ and $\psi'_{i}$ be some preimages of $\Phi^{N+i}\circ
p^{\tCH}(\phi_{i})$ and $\Phi^{N-i}\circ p^{\tCH}(\psi_{i})$. 
Then \cite[Lem.~2.5]{PSZ} implies that there are elements 
$\phi''_{i}\in r_{0}\hh^{(N)}_N(X\times X)r_{i}\text{ and }\psi''_{i}\in r_{i}\hh^{(N)}_N(X\times X) r_{0}$, 
such that $\phi_{i,j}\psi_{i,j}=r_{0,1}$ and $\psi_{i,j}\phi_{i,j}=r_{i,j}$. So the $\hh$-motives $(X,r_{i})$ and $(X,r_{0})(\alpha_i)$ are isomorphic. Taking $\Rs_{\hh}=(X,r_{0})$ we have
\[
M^{\hh}(X)=\bigoplus_{i=0}^n(X,r_{i})=\bigoplus_{i=0}^n(X,r_{0})(\alpha_i)=\bigoplus_{i=0}^n \Rs_{\hh}(\alpha_i).
\]

Over the splitting field the motive $\overline{\Rs_{\hh}}$ becomes isomorphic to $(\overline{X},p^{\hh}(r_{0}))$
and $pr_N\circ p^{\hh}(r_{0})=\Phi^{N}(p^{\tCH}(p_{0})).$
Since the Chow motive $(\overline{X},p^{\tCH}(p_{0,1}))$ splits into $\bigoplus_{j}\Lambda^0(\beta_j)$ we have
$p^{\tCH}(p_{0,1})=\sum_{j}f_{j}\otimes g_{j}$ with
$f_{j}\in\CH^{\alpha_j}(\overline{X})$,
$g_{j}\in\CH_{\alpha_j}(\overline{X})$ and
$\pi_{\tCH}(f_{j}g_{l})=\delta_{j,l}$. Take $\varphi_{j}$ and
$\gamma_j$ to be the liftings of $f_j$ and $g_j$ in $\hh^{(\alpha_j)}_{N-\alpha_j}(\overline{X})$ and $\hh^{(N-\alpha_j)}_j(\overline{X})$ respectively. 

Note that
$\varphi_{j}\gamma_{l}+\hh^{N+1}(\overline{X})=\Psi^N(f_{j}g_{l})$. Since
$\hh^{(N+1)}(\overline{X})=0$, we have
$\pi_{\hh}(\varphi_{j}\gamma_{l})=\pi_{\tCH}(f_{j}g_{l})=\delta_{j,l}$. Then
the element $\sum\varphi_{j}\otimes\gamma_{j}$ is an idempotent in $Corr_0(\overline{X}\times\overline{X})$. Since \[
pr_N(p^{\hh}(r_0))=\Phi^N(p^{\tCH}(p_{0}))=pr_N(\sum_{j}\varphi_{j}\otimes\gamma_{j}), 
\]
$p^{\hh}(r_{0})-\sum\varphi_{j}\otimes\gamma_{j}$ lies in $\hh^{N+1}(\overline{X}\times\overline{X})$, so is nilpotent. Then by Corollary~\ref{two_projectors_cor} we obtain
\[
\overline{\Rs_{\hh}}=(\overline{X},p^{\hh}(r_{0}))\cong(\overline{X},\sum_{j}\varphi_{j}\otimes\gamma_{j})=\bigoplus\Lambda(\beta_j).\qedhere
\]
\end{proof}

\begin{lem}\label{indecomp_lem} 
Assume that $\Lambda^1=\ldots\Lambda^N=0$. Then $\hh_N(\overline{X}\times\overline{X})\subseteq\hh^{(N)}(\overline{X}\times\overline{X})$ and in the diagram of Lemma~\ref{rat}
\[ 
\xymatrix{
\hh^{(N)}_{N}(X\times Y)\ar[rr]^{pr_N\circ p^{\hh}} && \hh^{(N/N+1)}_{N}(\overline{X}\times\overline{X})\\
\CH^N(X\times Y,\Lambda^0)\ar[rr]^{p^{\tCH}} && \CH^N\overline{X}\times\overline{X},\Lambda^0)\ar[u]_{\Phi^N}
}
\]
the inverse inclusion holds: $\im pr_N\circ p^{\hh}\subseteq \im \Phi^N\circ p^{\tCH}$.
\end{lem}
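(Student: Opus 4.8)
The plan is to handle the two assertions separately: the first by a direct computation with the Tate basis of $\hh(\overline{X}\times\overline{X})$, the second by a degree-formula argument modelled on Lemmas~\ref{rat} and~\ref{subset}.

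For $\hh_N(\overline{X}\times\overline{X})\subseteq\hh^{(N)}(\overline{X}\times\overline{X})$: since $\dim(\overline{X}\times\overline{X})=2N$ one has $\hh_N(\overline{X}\times\overline{X})=\hh^N(\overline{X}\times\overline{X})$, and by Proposition~\ref{structure} and Remark~\ref{Cohomology_of_cellular_variety} every element is a $\Lambda$-combination $\sum_{i,j}c_{ij}\,\tau_i\otimes\tau_j$ with $c_{ij}\in\Lambda^{N-\alpha_i-\alpha_j}$. The hypothesis $\Lambda^1=\dots=\Lambda^N=0$ forces $c_{ij}=0$ unless $N-\alpha_i-\alpha_j\leqslant 0$, i.e. unless $\alpha_i+\alpha_j\geqslant N$, and for such indices $\tau_i\otimes\tau_j\in\hh^{(\alpha_i)}(\overline{X})\otimes_\Lambda\hh^{(\alpha_j)}(\overline{X})\subseteq\hh^{(\alpha_i+\alpha_j)}(\overline{X}\times\overline{X})\subseteq\hh^{(N)}(\overline{X}\times\overline{X})$. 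In particular $p^{\hh}$ carries $\hh_N(X\times Y)$ into $\hh_N(\overline{X}\times\overline{X})\subseteq\hh^{(N)}(\overline{X}\times\overline{X})$, so $\hh^{(N)}_N(X\times Y)=\hh_N(X\times Y)$ and $pr_N\circ p^{\hh}$ is defined on all of $\hh_N(X\times Y)$.

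For $\im(pr_N\circ p^{\hh})\subseteq\im(\Phi^N\circ p^{\tCH})$: by the generalized degree formula, $\hh_N(X\times Y)$ is the $\Lambda$-span of classes $f_{\hh}(1)$ with $f\colon Z\to X\times Y$ projective and $Z$ smooth projective; since such a class lies in $\hh_{\dim Z}(X\times Y)$ it enters $\hh_N(X\times Y)$ only after multiplication by a coefficient in $\Lambda_{N-\dim Z}$, and as $\Lambda_{-1}=\dots=\Lambda_{-N}=0$ while $0\leqslant\dim Z\leqslant 2N$, only $\dim Z\leqslant N$ contributes. If $\dim Z<N$, base-changing $f$ along the transverse square over $L$ (as in the proof of Lemma~\ref{rat}) and applying Lemma~\ref{subset} componentwise to $Z_L$ gives $p^{\hh}(f_{\hh}(1))=j_{\hh}(1)\in\hh^{(2N-\dim Z)}(\overline{X}\times\overline{X})\subseteq\hh^{(N+1)}(\overline{X}\times\overline{X})$; since the filtration is a $\Lambda$-submodule, the associated generator of $\hh_N(X\times Y)$ still pulls back into $\hh^{(N+1)}(\overline{X}\times\overline{X})$ and so dies under $pr_N$. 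If $\dim Z=N$, the coefficient lies in $\Lambda^0$, the base change $j\colon Z_L\to\overline{X}\times\overline{X}$ has relative dimension $-N$, and Lemma~\ref{subset} (summed over the components of $Z_L$) gives $pr_N(p^{\hh}(f_{\hh}(1)))=\Phi^N(j_{\tCH}(1))=\Phi^N(p^{\tCH}(f_{\tCH}(1)))$; since $f_{\tCH}(1)\in\CH^N(X\times Y,\Lambda^0)$ and $\Phi^N,p^{\tCH}$ are $\Lambda^0$-linear, all such contributions lie in $\im(\Phi^N\circ p^{\tCH})$. As these classes generate $\hh_N(X\times Y)$, this yields the desired inclusion.

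The crux — and the place where the hypothesis $\Lambda^1=\dots=\Lambda^N=0$ is really used — is the dichotomy of the third paragraph: this hypothesis simultaneously forces every homological-degree-$N$ cycle class on $\overline{X}\times\overline{X}$ into filtration level $\geqslant N$ (giving the first assertion) and pushes the ``extra'' generators $\Lambda_{>0}\cdot f_{\hh}(1)$ with $\dim Z<N$ into filtration level $\geqslant N+1$, where they vanish in $\hh^{(N/N+1)}$. Apart from this, the argument is the same transverse-base-change bookkeeping as in Lemmas~\ref{rat} and~\ref{subset}; the only extra care is that $Z_L$ need not be irreducible, so Lemma~\ref{subset} must be applied to each of its (smooth, equidimensional) connected components.
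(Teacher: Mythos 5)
Your proof is correct and takes essentially the same route as the paper's: the generalized degree formula reduces $\hh_N(X\times Y)$ to coefficients times classes $[Z\to X\times Y]_{\hh}$ with $\codim Z\geqslant N$, the classes with $\codim Z>N$ land in $\hh^{(N+1)}(\overline{X}\times\overline{X})$ and die under $pr_N$, and the codimension-$N$ classes are identified with their Chow counterparts via Lemma~\ref{subset}. Your direct $\Lambda$-graded count in the $\tau_i\otimes\tau_j$ basis for the first inclusion is only a cosmetic variant of the paper's degree argument through $\Omega$ and the Lazard ring.
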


\begin{proof}
By the degree formula~\cite[Thm 4.4.7]{LM} $\hh(X\times X)$ is
generated as $\Lambda$-module by pushforwards $i_{\hh}(1)$, where
$i\colon Z\to X\times X$ is projective, $Z\in Sm_k$ and $i\colon Z\to
i(Z)$ is birational. Following~\cite{LM} we will denote such classes
by $[Z\to X\times X]_{\hh}$. Then $\hh_N(X\times X)$ is additively
generated by elements $\lambda[Z\to X\times X]_{\hh}$, where $\lambda$
is homogeneous such that $\deg \lambda+\codim Z=N$. Since
$\Lambda^1=\ldots\Lambda^N=0$, we have  $\codim Z\geqslant N$. Then in
$\Omega(\overline{X}\times\overline{X})$ we have 
\[
[Z_L\to\overline{X}\times\overline{X}]_{\Omega}=\sum
\omega_{i,j}\zeta_i\otimes\tau_j\text{ for some } \omega_{i,j}\in\mathbb{L}
.\] 
Since all elements of the Lazard ring have negative degrees and
$[Z_L\to\overline{X}\times\overline{X}]_{\Omega}$ has degree $N$, each
$\zeta_i\otimes\tau_j$ in the expansion is contained in
$\Omega^{(n)}(\overline{X}\times\overline{X})$. 
Then
\[
[Z_L\to\overline{X}\times\overline{X}]_{\hh}=\nu_{\overline{X}\times\overline{X}}[Z_L\to\overline{X}\times\overline{X}]_{\Omega}
\in \hh^{(N)}(\overline{X}\times\overline{X})\text{ and }\]
$[Z\to X\times X]_{\hh} \in \hh^{(N)}(X\times X)$. By the same reasons
$[Y\to X\times X]$ belongs to $\hh^{(N+1)}(X\times X)$ if $\codim
Y>N$. Then $\im pr_N\circ p^{\hh}$ is generated over $\Lambda^0$ by
classes of $[Z_L\to\overline{X}\times\overline{X}]_{\hh}$, where $Z\to
X\times X$ has codimension $N$.

By Lemma~\ref{subset} for any $Z\to X\times X$ of codimension $N$ we
have \[
pr_N\circ p^{\hh}([Z\to X\times X]_{\hh})=\Phi^N\circ p^{\tCH}([Z\to
X\times X]).\] 
Then $\im pr_N\circ p^{\hh}\subseteq \Phi^N\circ p^{\tCH}$ and the
theorem is proven.
\end{proof}

\begin{mthmB}\label{Rost_indecomp}
Let $\hh$ be oriented cohomology theory with coefficient ring $\Lambda$. Assume that the Chow motive $\Rs$ is indecomposable over $\Lambda^0$ and $\Lambda^{1}=\ldots=\Lambda^{N}=0$.
Then the $\hh$-motive $\Rs_{\hh}$ from theorem~A is indecomposable.
\end{mthmB}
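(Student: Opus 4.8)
The plan is to transfer indecomposability from the Chow side to the $\hh$-side by using the explicit correspondence between idempotents established in the proof of Theorem~A, together with the two key facts specific to the hypothesis $\Lambda^1=\ldots=\Lambda^N=0$: namely that $\hh^{(N+1)}(\overline X\times\overline X)=0$ (so that, by Lemma~\ref{filtr_comp} and the Remark following it, $\hh^{(N)}_N(\overline X\times\overline X)$ is honestly a ring under composition and the projection $pr_N$ loses no information on this piece), and the two-sided inclusion $\im pr_N\circ p^{\hh}=\im\Phi^N\circ p^{\tCH}$ coming from Lemma~\ref{rat} and Lemma~\ref{indecomp_lem}. First I would recall that $\Rs_{\hh}=(X,r_0)$ where $r_0\in\hh^{(N)}_N(X\times X)$ was chosen so that $pr_N\circ p^{\hh}(r_0)=\Phi^N\circ p^{\tCH}(p_0)$, and where $p_0\in\CH^N(X\times X,\Lambda^0)$ is the idempotent defining $\Rs$.

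\medskip

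The core step is the following. Suppose $\Rs_{\hh}$ decomposes nontrivially, i.e.\ there is an idempotent $e\in\operatorname{End}_{\Ms_{\hh}}(\Rs_{\hh})=r_0\,\hh^{(N)}_N(X\times X)\,r_0$ with $e\neq0$ and $e\neq r_0$. Apply the ring homomorphism $pr_N\circ p^{\hh}$ restricted to $r_0\,\hh^{(N)}_N(X\times X)\,r_0$; by Lemma~\ref{indecomp_lem} its image lands in $\Phi^N\circ p^{\tCH}\bigl(\CH^N(X\times X,\Lambda^0)\bigr)$, and conjugating by $\Phi^N$ we land inside $p^{\tCH}\bigl(\operatorname{End}_{\Ms_{\tCH}}(M^{\tCH}(X,\Lambda^0))\bigr)$. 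Composing further with a section of the surjection $p^{\tCH}$ (which exists up to nilpotents by Rost nilpotence, \cite[Prop.~3.1]{VZ}, or directly by \cite[Prop.~27.4]{AF}) produces an idempotent $\bar e$ in $\operatorname{End}_{\Ms_{\tCH}}(\Rs)$ whose image under $p^{\tCH}$ agrees, modulo the nilpotent kernel of $pr_N\circ p^{\hh}$ (Lemma~\ref{nilpotent_kernel}), with the image of $e$. I would then argue that $\bar e$ is again neither $0$ nor $r_0$'s Chow counterpart $p_0$: if $\bar e$ were $0$ then $pr_N\circ p^{\hh}(e)=0$, so $e$ lies in the nilpotent ideal $\ker(pr_N\circ p^{\hh})$ of the ring $r_0\,\hh^{(N)}_N r_0$, forcing the idempotent $e=0$; symmetrically $r_0-e$ cannot be in that kernel. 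Hence $\Rs$ decomposes, contradicting its indecomposability over $\Lambda^0$.

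\medskip

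The step I expect to be the main obstacle is making precise that the "lifting of idempotents" machinery runs consistently in both directions: I need that an idempotent $e\in r_0\,\hh^{(N)}_N r_0$ is nonzero precisely when its image in $\Rs$ (via $\Phi^N\circ(p^{\tCH})^{-1}\circ pr_N\circ p^{\hh}$, interpreted via the nilpotent kernels) is nonzero, and this relies on the fact that an idempotent mapping into a nilpotent ideal must vanish — which is where $\hh^{(N+1)}(\overline X\times\overline X)=0$ (equivalently $\Lambda^{1}=\ldots=\Lambda^N=0$) is genuinely used, via Lemma~\ref{filtr_comp} and Lemma~\ref{nilpotent_kernel}. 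Concretely I would phrase the whole argument as: the ring $\operatorname{End}_{\Ms_{\hh}}(\Rs_{\hh})$ surjects, with nilpotent kernel, onto a subring of $\operatorname{End}_{\Ms_{\tCH}}(\Rs)$ via $pr_N\circ p^{\hh}$, and a ring with no nontrivial idempotents whose nil-radical quotient also has no nontrivial idempotents has no nontrivial idempotents; since $\Rs$ is indecomposable, $\operatorname{End}_{\Ms_{\tCH}}(\Rs)$ has only the trivial idempotents $0,p_0$ (up to nilpotents, by the Krull--Schmidt property for Chow motives with $\Lambda^0$-coefficients), hence so does $\operatorname{End}_{\Ms_{\hh}}(\Rs_{\hh})$, i.e.\ $\Rs_{\hh}$ is indecomposable.
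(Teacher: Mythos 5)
Your proposal follows essentially the same route as the paper's proof: push a nontrivial idempotent $e\in End(\Rs_{\hh})$ through $(\Phi^N)^{-1}\circ pr_N\circ p^{\hh}$, use Lemma~\ref{indecomp_lem} to see that the resulting idempotent on the Chow side is rational, use the nilpotence of $\ker(pr_N\circ p^{\hh})$ (Lemma~\ref{nilpotent_kernel}) to see it is nontrivial, and then lift on the Chow side via Rost nilpotence to contradict the indecomposability of $\Rs$; this is exactly the paper's argument, stated contrapositively with $r_1=e$, $r_2=r_0-e$. One correction: your assertion that $\Lambda^1=\cdots=\Lambda^N=0$ is equivalent to $\hh^{(N+1)}(\overline{X}\times\overline{X})=0$ is false --- already for $\hh=\Omega$ (where the hypothesis holds) the class $a_1\,\zeta_i\otimes\tau_j$ with $N-\alpha_i+\alpha_j=N+1$ is a nonzero element of $\Omega^{(N+1)}_N(\overline{X}\times\overline{X})$; what is true, and all that is needed, is that $\hh^{(N+1)}(\overline{X}\times\overline{X})$ is a nilpotent ideal for the composition product (Lemma~\ref{filtr_comp}), so that Lemma~\ref{nilpotent_kernel} holds with no hypothesis on $\Lambda$, while the assumption $\Lambda^1=\cdots=\Lambda^N=0$ enters only through Lemma~\ref{indecomp_lem}. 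Since your actual chain of deductions invokes the nilpotent-kernel lemmas rather than this vanishing, the slip is not load-bearing and the proof stands.
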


\begin{proof} 
By definition, $\Rs_{\hh}=(X,r_{0})$ where $r_{0}$ is an idempotent in $\hh^{(N)}_N(X\times X)$.
If $\Rs_{\hh}$ is decomposable, then $r_0=r_1+r_2$ for some idempotents in $r_1,r_2\in\hh_N(X\times X)$
Then by Lemma~\ref{indecomp_lem} $r_1,r_2\in\hh^{(N)}_N(X\times X)$
and $p_1=(\Phi^N)^{-1}\circ pr_N\circ p^{\hh}(r_1)$ and
$p_2=(\Phi^N)^{-1}\circ pr_N\circ p^{\hh}(r_2)$ are rational
idempotents and $p^{\tCH}(p_0)=p_1+p_2$. These idempotents are
nontrivial, since $\ker (\Phi^N)^{-1}\circ pr_N\circ p^{\hh}$ is
nilpotent. Hence, the Chow motive $\Rs=(X,p_{0})$ is decomposable, a contradiction.
\end{proof}

\begin{ex}
If $\hh=\Omega$ or connective $K$-theory,  all the elements in the
coefficient ring have negative degree. 
Then Theorems~A and B prove that
$\hh$-motivic irreducible decomposition coincides with
integral Chow-motivic decomposition. This gives another proof of the result by Vishik-Yagita~\cite[Cor.~2.8]{VY}.
\end{ex}

\begin{ex}
Take $\hh$ to be Morava K-theory $\hh=K(n)^*$. The coefficient ring is
$\mathbb{F}_p[v_n,v_n^{-1}]$, where $\deg(v_n)=-2(p^n-1)$. In the
case $n>\log_p(\frac{N}{2}+1)$ Theorems~A and B prove that $M^{K(n)}(X)$ has the same irreducible decomposition as Chow motive modulo $p$.
\end{ex}

\begin{mthmC}
Suppose that $X,Y$ are generically cellular and $Y$ is a twisted form of $X$, i.e. $\overline{Y}\cong\overline{X}$.

If $M^{\tCH}(X,\Lambda^0)\cong M^{\tCH}(Y,\Lambda^0)$, then
$M^{\hh}(X)\cong M^{\hh}(Y)$.
\end{mthmC}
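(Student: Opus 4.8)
The plan is to lift an isomorphism of Chow motives to an isomorphism of $\hh$-motives, using the square of Lemma~\ref{rat} together with the nilpotence of $\ker(pr_N\circ p^{\hh})$ from Lemma~\ref{nilpotent_kernel}. Fix a field $L$ over which $X$ and $Y$ both become cellular and isomorphic, write $\overline X=X_L\cong Y_L$ and $N=\dim X=\dim Y$. An isomorphism $M^{\tCH}(X,\Lambda^0)\cong M^{\tCH}(Y,\Lambda^0)$ is the same as a pair of degree-zero correspondences $f\in\CH^N(X\times Y,\Lambda^0)$ and $g\in\CH^N(Y\times X,\Lambda^0)$ with $g\circ f=\Delta^{X}_{\tCH}(1)$ in $\CH^N(X\times X,\Lambda^0)$ and $f\circ g=\Delta^{Y}_{\tCH}(1)$ in $\CH^N(Y\times Y,\Lambda^0)$.

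First I would apply Lemma~\ref{rat} with $m=N$ to $f$ and to $g$, obtaining lifts $\tilde f\in\hh^{(N)}_N(X\times Y)$ and $\tilde g\in\hh^{(N)}_N(Y\times X)$ with $pr_N\circ p^{\hh}(\tilde f)=\Phi^N\circ p^{\tCH}(f)$ and $pr_N\circ p^{\hh}(\tilde g)=\Phi^N\circ p^{\tCH}(g)$, where $p$ is the appropriate base-change projection in each case. Viewing $\tilde f\in Corr_0(X,Y)=Hom(M^{\hh}(X),M^{\hh}(Y))$ and $\tilde g\in Corr_0(Y,X)=Hom(M^{\hh}(Y),M^{\hh}(X))$, it then remains to show that $\tilde g\circ\tilde f-\Delta^{X}_{\hh}(1)$ and $\tilde f\circ\tilde g-\Delta^{Y}_{\hh}(1)$ are nilpotent, after which Lemma~\ref{isomorphic_objects} gives $M^{\hh}(X)\cong M^{\hh}(Y)$.

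The key point is that $pr_N\circ p^{\hh}$ is multiplicative for the composition product: base change commutes with the correspondence product; by Lemma~\ref{filtr_comp} and the remark following it, $\hh^{(N)}_N(\overline X\times\overline X)$ is a ring under composition and $\hh^{(N+1)}(\overline X\times\overline X)$ is a two-sided ideal; and $\Phi$ intertwines the composition products on $\hh^{(N/N+1)}_N(\overline X\times\overline X)$ and on $\CH^N(\overline X\times\overline X,\Lambda^0)$. Since $\tilde f,\tilde g$ lie in the respective $\hh^{(N)}_N$ groups, so does $\tilde g\circ\tilde f$ by Lemma~\ref{filtr_comp}, and one computes
\[
pr_N\circ p^{\hh}(\tilde g\circ\tilde f)=\Phi^N\circ p^{\tCH}(g\circ f)=\Phi^N\circ p^{\tCH}(\Delta^{X}_{\tCH}(1))=pr_N(\Delta^{\overline X}_{\hh}(1))=pr_N\circ p^{\hh}(\Delta^{X}_{\hh}(1)),
\]
where the third equality uses that $\Phi=\Psi\otimes\Psi$ sends the class of $\Delta^{\overline X}_{\hh}(1)=\sum_i\zeta^{\hh}_i\otimes\tau^{\hh}_i$ to $\Delta^{\overline X}_{\tCH}(1)=\sum_i\zeta^{\tCH}_i\otimes\tau^{\tCH}_i$ by the lemma following Proposition~\ref{Psi_existence}. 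Hence $\tilde g\circ\tilde f-\Delta^{X}_{\hh}(1)$ lies in $\ker(pr_N\circ p^{\hh})$, which consists of nilpotents by Lemma~\ref{nilpotent_kernel}; the same argument with $X$ and $Y$ interchanged shows $\tilde f\circ\tilde g-\Delta^{Y}_{\hh}(1)$ is nilpotent as well.

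I expect the only real difficulty to be bookkeeping: verifying that Lemma~\ref{rat} places the lifts $\tilde f,\tilde g$ in the graded piece $\hh^{(N)}_N$ (so that $\tilde g\circ\tilde f\in\hh^{(N)}_N(X\times X)$ and $\tilde f\circ\tilde g\in\hh^{(N)}_N(Y\times Y)$, where Lemma~\ref{nilpotent_kernel} applies), and that the composition-compatibility statements — stated in the excerpt for the self-product $\overline X\times\overline X$ — carry over verbatim to correspondences between the distinct cellular varieties $\overline X\cong\overline Y$. The latter is immediate because those proofs only use the Tate-type bases of $\hh(\overline X)$.
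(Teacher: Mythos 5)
Your proposal is correct and follows essentially the same route as the paper: lift the mutually inverse Chow correspondences $f,g$ via Lemma~\ref{rat} to classes in $\hh^{(N)}_N$, observe that $\tilde g\circ\tilde f-\Delta^{X}_{\hh}(1)$ and $\tilde f\circ\tilde g-\Delta^{Y}_{\hh}(1)$ lie in the nilpotent kernel of $pr_N\circ p^{\hh}$ (Lemma~\ref{nilpotent_kernel}), and conclude with Lemma~\ref{isomorphic_objects}. Your explicit verification that $pr_N\circ p^{\hh}$ is compatible with the composition product (via Lemma~\ref{filtr_comp} and the ring-homomorphism property of $\Phi^N$) is a detail the paper leaves implicit, but it is the intended justification.
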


\begin{proof}
Let $f\in\CH^{N}(X\times Y)$ and $g\in\CH^N(Y\times X)$ be correspondences, that give mutually inverse isomorphisms between $M^{\tCH}(X)$ and $M^{\tCH}(Y)$.
Consider the diagram
\[
\begin{xymatrix}{
\hh^{(N)}_N(X\times Y)\ar[r]^{pr_N\circ p^{\hh}} & \hh^{(N/N+1)}_N(\overline{X}\times\overline{X})\\
\CH^N(X\times Y,\Lambda^0)\ar[r]^{p^{\tCH}} & \CH^N(\overline{X}\times\overline{X},\Lambda^0)\ar[u]_{\Phi^N}}
\end{xymatrix}
\]
Then by Lemma~\ref{rat} we can find $f_1\in\hh^{(N)}_N(X\times Y)$ and $g_1\in\hh^{(N)}_N(Y\times X)$ such that $pr_N\circ p^{\hh}(f_1)=\Phi^N(f)$ and $pr_N\circ p^{\hh}(g_1)=\Phi^N(g)$. Then $g_1\circ f_1-\Delta_{X}$ lies in the kernel of the map
\[
\hh^{(N)}_N(X\times X)\stackrel{pr_N\circ p^{\hh}}\longrightarrow\hh^{(N/N+1)}_N(\overline{X}\times\overline{X})
\]
which consists of nilpotents by Lemma~\ref{nilpotent_kernel}. So
$g_1\circ f_1-\Delta_{X}$ is nilpotent. 
By the same reasons $f_1\circ g_1-\Delta_{Y}$ is nilpotent. Then $M^{\hh}(X)$ and $M^{\hh}(Y)$ are isomorphic by Lemma~\ref{isomorphic_objects} and the theorem is proven.
\end{proof}

\bibliographystyle{plain}

\end{document}